\documentclass[11pt, twoside]{article}

\usepackage[utf8]{inputenc}
\usepackage{url, amsmath,enumerate,fancyhdr,amssymb, amsthm, pstricks, epsf, lscape, ifpdf, graphicx, float, mathtools}

\DeclarePairedDelimiter{\ceil}{\lceil}{\rceil}

\DeclareMathOperator*{\argmin}{argmin}

\usepackage{blindtext}
\usepackage{relsize}
\usepackage[dvipsnames]{xcolor}

\usepackage{algorithm}
\usepackage{algpseudocode}
\usepackage{bbm}
\usepackage{soul}

\usepackage{geometry}
\usepackage{setspace}

\numberwithin{equation}{section}

\usepackage{hyperref}
 \hypersetup{colorlinks,
 citecolor=blue,
 filecolor=blue,
 linkcolor=black,
 urlcolor=blue}

 \usepackage[title]{appendix}

\newcommand{\norm}[1]{\left\lVert#1\right\rVert}

\mathchardef\mhyphen="2D

\newtheorem{theorem}{Theorem}[section]
\newtheorem{lemma}[theorem]{Lemma}

\theoremstyle{definition}

\newtheorem{corollary}[theorem]{Corollary}

\newtheorem{remark}[theorem]{Remark}

\usepackage{booktabs,tabularx,threeparttable}
\usepackage{amssymb}
\usepackage{hyperref}

\newcolumntype{Y}{>{\raggedright\arraybackslash}X}
\newcolumntype{Z}{>{\centering\arraybackslash}X}

\newcommand{\FOSP}{\(\epsilon\)-FOSP}
\newcommand{\SOSP}{\(\epsilon\)-SOSP}

\newcolumntype{P}[1]{>{\centering\arraybackslash}p{#1}}

\newcommand{\IterRow}[3]{#1 & #2 & #3\\}

\makeatletter
\DeclareMathOperator*{\infd}{inf\vphantom{\operator@font p}} % inf with descender
\DeclareMathOperator*{\supx}{su\smash{\operator@font p}} % sup without descender
\makeatother

\usepackage[numbers]{natbib}
\usepackage{fancyhdr}

\begin{document}

\title{\textbf{A Sequential Cubic Programming Method with Second-Order Complexity
Guarantees for Equality Constrained Optimization}} 
\author{Nikos Dimou$^{1,}\footnote{Email: dimou@unc.edu.}$\; and Michael J. O'Neill$^{1,}\footnote{Email: mikeoneill@unc.edu.}$}

\date{$^1$Department of Statistics and Operations Research \\ University of North Carolina at Chapel Hill}

\maketitle

\pagestyle{myheadings}
\pagestyle{plain}

\begin{abstract}
We develop a new method for equality constrained optimization problems based on a sequential cubic programming framework. Each iteration utilizes a step decomposition based on the Jacobian of the constraints into a normal and a tangential component, the latter of which is found by solving a subproblem involving cubic regularization. The method incorporates second-order correction steps as necessary to ensure global convergence to second-order stationary points as well as local quadratic convergence. In addition, we show that the algorithm is the first to obtain worst case complexity guarantees on the order of $\mathcal{O}(\epsilon_g^{-3/2})$ for the gradient of the Lagrangian, $\mathcal{O}(\epsilon_H^{-3})$ in terms of second-order stationarity, and $\mathcal{O}(\epsilon_c^{-1})$ in terms of the constraint violation. These  are the best known complexity guarantees of any method for this class of problems.
\end{abstract}

\section{Introduction}\label{sec:sec1}

In this paper, we consider the equality constrained optimization problem
\begin{equation}\label{eq:1}
    \min\limits_{x\in\mathbb{R}^{n}} f(x)\;\;\text{s.t.}\;\;c(x)=0,
\end{equation}
where $f:\mathbb{R}^{n}\to\mathbb{R}$ and $c:\mathbb{R}^{n}\to\mathbb{R}^{m}$ are smooth, non-convex functions. Problems of this form arise naturally in a number of areas, such as resource allocation, optimal control and PDE-constrained optimization \cite{bertsekas1998network, betts2010practical, hinze2008optimization}. For a general non-convex optimization problem it is well-known that finding a local minimum is an intractable problem. Therefore, existing algorithms typically target approximate first-order stationarity instead of global optimality. While the notion of stationarity that appears in the literature for this class of problems varies, perhaps the most standard one is defined in terms of the Lagrangian function: A point $x\in\mathbb{R}^{n}$ is said to be a $(\epsilon_g,\epsilon_c)$-FOSP for (\ref{eq:1}) if there exists $\lambda \in \mathbb{R}^{m}$ such that 
\begin{equation}\label{eq:fosp}
    \norm{\nabla f(x) + \nabla c(x) \lambda}\leq \epsilon_g\;\;\text{and}\;\;\norm{c(x)}_1\leq \epsilon_c\;.
\end{equation}

However, the prevalence of saddle points in non-convex optimization limits guarantees of convergence to global minima. A simple way of overcoming this issue is to search for second-order approximate solutions: We say that a point $x\in\mathbb{R}^{n}$ is a $(\epsilon_g,\epsilon_c,\epsilon_H)$-SOSP for (\ref{eq:1}) if there exists $\lambda\in\mathbb{R}^{m}$ so that (\ref{eq:fosp}) holds and
\begin{equation}\label{eq:sosp}
    d^\top \left(\nabla^2 f(x) +\sum_{i=1}^{m}\lambda_i \nabla^2 c_i(x) \right)d\geq -\epsilon_H \norm{d}^2
\end{equation}
for all $d\in\mathbb{R}^{n}$ such that $\nabla c(x)^\top d=0$. Under reasonable assumptions and certain constraint qualifications, the target points (\ref{eq:fosp})-(\ref{eq:sosp}) are able to fully capture the notion of approximate second-order KKT points \cite[Chapter 12]{nocedal2006numerical}.\par

Recent work emphasizes iteration complexity results, providing practical performance guarantees that go beyond mere convergence, by evaluating the number of steps required to reach a target tolerance. In the unconstrained setting, the optimal iteration complexity bound to $(\epsilon_g,\epsilon_H)$-SOSPs (when one ignores the constraints in \eqref{eq:fosp}-\eqref{eq:sosp}) is of order $\mathcal{O}\left(\max\left\{\epsilon_g^{-3/2},\epsilon_H^{-3}\right\}\right)$, under the assumption that the objective function is twice Lipschitz differentiable. This result is due to the fundamental work of Nesterov and Polyak \cite{nesterov2006cubic} who proposed a cubic regularization method, which was later extended by Cartis et al. \cite{cartis2011adaptive} with the introduction of the ARC algorithm.\par

Soon after their development for unconstrained optimization, cubic regularization methods were extended to various constrained settings, including the equality constrained case. Specifically, Cartis et al. \cite{cartis2013evaluation} developed a two-phase method, in which a first phase seeks an approximate feasible solution, and a second phase seeks optimality while retaining approximate feasibility. Although this approach tends to be undesirable in practice, it achieves optimal complexity bounds of order $\mathcal{O}\left(\epsilon^{-3/2}\right)$ with respect to the gradient of the Lagrangian. Similar complexity bounds were also attained in a two-phase trust funnel algorithm developed in \cite{curtis2018complexity}. This was inspired by the TRACE algorithm for unconstrained optimization \cite{curtis2017trust}, as well as by sequential quadratic programming methods (SQP), the latter of which are well-known for their superior practical performance compared to other single-phase methods in non-linear optimization \cite{nocedal2006numerical}. In fact, a number of SQP algorithms have been designed to tackle (\ref{eq:1}) that offer both global convergence \cite{berahas2021sequential, berahas2024modified, curtis2024sequential} and good complexity guarantees \cite{berahas2025sequential, curtis2024worst}.\par

\subsection{Contributions}\label{sec:sec1.1}

Aside from their practical performance differences, a striking imbalance characterizes the  theoretical guarantees established by single-phase and two-phase schemes: While two-phase methods enjoy optimal complexity guarantees with respect to the gradient of the Langrangian, the state-of-the-art SQP methods demonstrate improved constraint violation bounds, none of which have yet been established for higher-order stationarity. Although numerous first- and second-order approaches have been developed for equality constrained optimization problems (see Table \ref{tab:table}), there yet does not exist an algorithm that simultaneously achieves optimal complexity bounds with respect to the gradient and Hessian of the Lagrangian, and ensures sufficiently good complexity guarantees with respect to the constraint violation. In more concrete terms, the question of whether the worst-case complexity guarantees established for the unconstrained optimization setting carry over to the equality constrained one, remains open.

The aim of this paper is to give a definitive answer to this question, by developing a new method for solving equality constrained optimization problems. More specifically, we design a sequential cubic programming method (SCP) that converges to a $(\epsilon_g,\epsilon_c)$-FOSP in at most $\mathcal{O}\left(\max\left\{\epsilon_g^{-2},\epsilon_c^{-1}\right\}\right)$ iterations when only first-order derivative information is available, and to a $(\epsilon_g,\epsilon_c,\epsilon_H)$-SOSP in at most $\mathcal{O}\left(\max\left\{\epsilon_g^{-3/2},\epsilon_c^{-1},\epsilon_H^{-3}\right\}\right)$ steps when the objective and constraints are twice Lipschitz differentiable. An immediate consequence of our result is that the worst-case number of steps needed for the first bound of (\ref{eq:fosp}) and (\ref{eq:sosp}) to be satisfied, coincides with that of the unconstrained case \cite{cartis2011adaptive2, nesterov2006cubic}, which is known to be sharp \cite{cartis2012complexity}. Importantly, the order $\mathcal{O}(\epsilon_c^{-1})$ in terms of the constraint violation is a significant improvement of the previously known bound $\mathcal{O}(\epsilon_c^{-2})$ for second-order constrained optimization \cite{goyens2024computing}, and matches the current benchmark for this class of problems in terms of first-order criticality \cite{berahas2025sequential, curtis2024worst}. In other words, we show that the worst-case number of steps needed to approximately minimize a smooth non-convex function is not altered when smooth non-convex equality constraints are also considered, unless one strongly focuses on feasibility $\left(\text{i.e.,\;}\epsilon_c\leq \min\left\{\epsilon_g^{3/2},\epsilon_H^3\right\}\right)$\footnote{Although one has the freedom of enforcing this tolerance relation, it is not preferred theoretically or in practice; most related works typically target approximate criticality with $\epsilon_c=\epsilon_g=\epsilon_H$ or $\epsilon_c=\epsilon_g=\epsilon_H^2$ (see Table \ref{tab:table} and Appendix \ref{sec:AppA}).}.\par

Inspired by the recent work of Pei et al. \cite{pei2023sequential}, our approach is based on a sequential cubic programming framework, which relies on classical works in both sequential programming and cubic regularization settings. In particular, in every iteration we formulate and solve a subproblem based on a local linearization of the constraints and a local cubic approximation of the Lagrangian function. Every iterate involves a step decomposition into a normal and a tangential component, the former of which is chosen to satisfy the linearized feasibility constraints in an inexact manner, while the tangential step is chosen to be an approximate
minimizer of an unconstrained cubic problem. Cubic regularization subproblems have been extensively studied in the relevant literature and have been shown to be efficiently solvable, thus rendering our method computationally attractive. A significant characteristic of the proposed method is the lack of dependence on initialization conditions or penalty parameters, the latter of which often lead to key restrictions in the implementation of two-phase and augmented Lagrangian methods, respectively (see Appendix \ref{sec:AppA}).\par

Apart from ARC and SQP frameworks, the proposed algorithm is also inspired by trust-region methods. In the early work of Byrd et al. \cite{byrd1987trust}, it was realized that both the objective and constraint violation might dramatically increase when iterates lie around saddle points, thereby prohibiting global convergence. This phenomenon, also known as the \textit{Maratos effect}, can be addressed by introducing second-order correction steps. Besides effectively adopting this methodology as to ensure nice global convergence properties, we modify the criterion of \cite{byrd1987trust} for invoking such correction, by utilizing the known connection between trust region radius and cubic regularization parameters. The latter mechanism also ensures local quadratic convergence properties, a feature uncommon in the surrounding literature\footnote{The only relevant work we are aware of that both ensures complexity bounds and claims local quadratic convergence to second-order points in equality constrained optimization is that of Goyens et al. \cite{goyens2024computing}.}.

\subsection{Related work}\label{sec:sec1.2}

In the last decade, a plethora of works have been actively developed for solving equality constrained optimization problems. A line of cubic regularization works by Cartis, Gould and Toint \cite{cartis2011evaluation, cartis2014complexity} showed iteration complexity bounds to first-order critical points that match those of steepest descent. A two-phase method by the same authors \cite{cartis2019optimality} extended these theoretical results to higher-order stationary points. A trust region SQP method was recently proposed by Fang et al. \cite{fang2025high}, which established high-probability iterations complexity bounds for identifying first- and second-order stationarity. Similar trust-region SQP methods have previously appeared in both the deterministic \cite{fletcher2002global} and the stochastic \cite{fang2024fully} settings, offering global convergence guarantees. Aside from the standard quadratic and cubic methods, a number of works have exploited smoothness properties of the augmented Lagrangian function. Improving on the complexity bounds of Xie and Wright \cite{xie2021complexity}, He et al. \cite{he2023newton} suggested a Newton-CG augmented Lagrangian algorithm and gave a range of iteration complexity guarantees with dependence on a generalized linear independence constraint qualification condition. Another augmented Lagrangian algorithm was introduced by Grapiglia and Yuan \cite{grapiglia2021complexity}. Their complexity bounds boil down to the ones presented here, under the additional assumption that the constraints are linear. Fletcher's augmented Lagrangian function was utilized by Goyens et al. \cite{goyens2024computing} in order to deal with non-convex equality constrained optimization problems defined over manifolds, and demonstrated iteration complexity bounds that match those of the unconstrained setting to second-order stationary points, when only Lipschitz smoothness is considered. Bai and Mei \cite{bai2018analysis} represents an early work in complexity for SQP methods which established fast local linear convergence properties via a first-order SQP method over the same manifold framework. Further, a variation of the two-phase method of \cite{cartis2013evaluation} has been extended to higher orders by Birgin et al. \cite{birgin2016evaluation}. Lastly, a linearized quadratic penalty method was developed by Bourkhissi and Necoara \cite{bourkhissi2025complexity} with convergence and complexity warranties to first-order critical points. We summarize the iteration complexity bounds of the preceding works in Table \ref{tab:table}, and provide additional details in Appendix \ref{sec:AppA}.\par

\begin{table}[t]
  \centering
  \small
  \begin{threeparttable}
    
    \begin{tabularx}{\textwidth}{P{4.8cm}    P{5.4cm}    P{5.0cm}}
      \toprule
      \textbf{Paper} & \textbf{\FOSP\ complexity} & \textbf{\SOSP\ complexity} \\
      \midrule
      
      \IterRow{\textbf{This work}}%
              {\(\mathcal{O}\big(\max\left\{\epsilon_g^{-2},\epsilon_c^{-1}\right\}\big)\)}%
              {\(\mathcal{O}\Big(\max\left\{\epsilon_g^{-3/2},\epsilon_c^{-1},\epsilon_H^{-3}\right\}\Big)\)}
      \IterRow{\cite{goyens2024computing}$^{\dag,\ddag}$}%
              {\(\mathcal{O}\big(\max\left\{\epsilon_g^{-2},\epsilon_c^{-2}\right\}\big)\)}%
              {\(\mathcal{O}\big(\max\left\{\epsilon_g^{-2},\epsilon_c^{-2},\epsilon_H^{-3}\right\}\big)\)}
      \IterRow{\cite{berahas2025sequential, curtis2024worst}}%
              {\(\mathcal{O}\big(\max\left\{\epsilon_g^{-2},\epsilon_c^{-1}\right\}\big)\)}%
              {—}
              \IterRow{\cite{cartis2013evaluation, curtis2018complexity}$^\dag$}%
              {\(\mathcal{O}\big(\epsilon_P^{-1/2}\epsilon_D^{-3/2}\big)\) (H)}%
              {—}
      \IterRow{\cite{fang2025high}}%
              {\(\mathcal{O}\big(\epsilon^{-2}\big)\) (H)}%
              {\(\mathcal{O}\big(\epsilon^{-3}\big)\)}
      \IterRow{\cite{cartis2019optimality}$^{\dag,\ddag}$}%
              {\(\mathcal{O}\big(\epsilon_P^{-1}\epsilon_D^{-2}\big)\)}%
              {\(\mathcal{O}\big(\max\{\epsilon_P^{-1},\epsilon_P^{-2}\epsilon_D^{-3}\}\big)\)}
      \IterRow{\cite{cartis2011evaluation, cartis2014complexity}$^{\dag,\ddag}$}%
              {\(\mathcal{O}\big(\epsilon^{-2}\big)\)}
              {—}
      \IterRow{\cite{he2023newton}}%
              {—}%
              {\(\widetilde{\mathcal{O}}\big(\epsilon_g^{-2}\max\left\{\epsilon_g^{-2}\epsilon_H,\epsilon_H^{-3}\right\}\big)\)}
               \IterRow{\cite{bai2018analysis}$^{\dag,\ddag}$}%
              {\(\mathcal{O}\big(\epsilon^{-4}\big)\)}
              {—}
              \IterRow{\cite{bourkhissi2025complexity}}%
              {\(\mathcal{O}\big(\epsilon^{-5/2}\big)\)}%
              {—}
    \IterRow{\cite{xie2021complexity}}%
              {\(\mathcal{O}\big(\epsilon^{-11/2}\big)\) (H)}%
              {\(\mathcal{O}\big(\epsilon^{-7}\big)\)}
      \IterRow{\cite{grapiglia2021complexity}$^{\ddag}$}%
              {\(\mathcal{O}\big(\epsilon^{-2/(\alpha-1)}\big)\), \(\alpha>1\)}%
              {—}    \IterRow{\cite{birgin2016evaluation}$^{\dag,\ddag}$}%
              {\(\mathcal{O}\big(\epsilon_P\,\epsilon_D^{-3/2}\min\{\epsilon_D,\epsilon_P\}^{-3/2}\big)\) (H) }%
              {—}
      \bottomrule
    \end{tabularx}
    \caption{\small{Best known iteration complexity bounds to first and second-order stationary points for equality constrained optimization problems. The bounds given in terms of the unscripted tolerance $\epsilon>0$ typically mean that $\epsilon=\epsilon_g=\epsilon_c=\epsilon_H$, with the exception of \cite{cartis2011evaluation, cartis2014complexity}. (H) Second-order derivative information (Lipschitz continuity of the Hessian of the objective and/or constraints) is used for first-order stationarity. $(\dag)$ The stationary points targeted are different to (\ref{eq:fosp})-(\ref{eq:sosp}). $(\ddag)$ The constrained optimization setting considered is more generic than \eqref{eq:1}. We refer the reader to Appendix \ref{sec:AppA} for more details about the different types of target points and optimization settings of these works.}}
    \label{tab:table}
  \end{threeparttable}
\end{table}

An increasing number of works rises in the literature of more general constrained optimization problems. First, (\ref{eq:1}) has been studied in frameworks with additional inequality constraints, as one can already see from Table \ref{tab:table}. To be exact, non-convex inequality constrained problems have been studied via augmented Lagrangian \cite{grapiglia2021complexity}, SQP \cite{curtis2024sequential}, and two-phase \cite{birgin2016evaluation} methods. Moreover, optimization problems with constraints of the form ``$x\in C$", where $C$ represents a closed convex set, have been recently studied and good complexity guarantees have been established. A Frank-Wolfe algorithm was developed in \cite{nouiehed2018convergence} that converges to $(\epsilon_g,\epsilon_H)$-SOSPs in at most $\mathcal{O}\big(\max\left\{\epsilon_g^{-2},\epsilon_H^{-3}\right\}\big)$ iterations. A ghost-penalty methodology was proposed in \cite{facchinei2021ghost} which reaches a scaled KKT first-order critical point in either $\mathcal{O}\big(\epsilon^{-4}\big)$ or $\mathcal{O}\big(\epsilon^{-2}\big)$ total steps, depending on various (feasibility) assumptions. Better complexity results for similar critical points of higher-orders were given in \cite{Cartis2019}, under a modified framework of the ARC algorithm \cite{cartis2011adaptive}. The fundamental problem of escaping saddle points, which is related to the Maratos effect, has been studied in this convex-constrained setting by Mokhtari et al. \cite{mokhtari2018escaping}, and good complexity guarantees to second-order criticality have been given.

\subsection{Organization}\label{sec:sec1.3}

The rest of the paper is organized as follows. In Section \ref{sec:sec2} we formally introduce the sequential cubic optimization algorithm and emphasize on computational approximations of all steps considered. In Section \ref{sec:sec3} we analyze global convergence qualities and iteration complexity bounds to first-order stationary points. Section \ref{sec:sec4} includes the analysis to second-order points, when Lipschitz continuity of the Hessian of the Lagrangian is provided. The main convergence and complexity results of this paper appear in this section. In Section \ref{sec:sec5} we show that our algorithms enjoys local quadratic convergence properties. Finally, in Section \ref{sec:sec6} we briefly discuss future directions and set some open questions related to our work.

\subsection{Notation}\label{sec:sec1.4}
 
Unless otherwise indicated, $\norm{\cdot}$ denotes the Euclidean
$\ell_2$-norm for vectors and matrices, and $\norm{\cdot}_1$ denotes the $\ell_1$-norm for vectors. Let $\mathcal{R}(\cdot)$ and $\mathcal{N}(\cdot)$ denote the range and null spaces, respectively. We define $\mathcal{X}$ to be an open convex set containing all iterate points $\{x_k\}$ of the proposed algorithm. For every iteration $k\geq 0$, let $f_k := f(x_k)$, $g_k := \nabla f(x_k)$ and $\nabla^2 f_k:=\nabla^2f(x_k)$ be the value, gradient and Hessian of the objective function,
respectively. Further,
$A_k := \nabla c(x_k)^\top\in\mathbb{R}^{m\times n}$ denotes the Jacobian matrix of the constraints, and $a^i_k$ represents its $i$-th column. We write the Hessian matrix of the $i$-th constraint function by $\nabla^2c_k^i:=\nabla^2c^i(x_k)$. We define the Lagrangian function $\mathcal{L}(x_k,\lambda_k):=f_k+\lambda_k^\top c_k$ at iteration $k\geq 0$, where $\lambda_k\in\mathbb{R}^{m}$ are the dual variables. In addition, given a matrix $B$, $\lambda_{\min}(B)$ denotes its minimum eigenvalue, and $\sigma_{\min}(B)$ its lowest singular value. For functions $h_1:\mathbb{R}\to\mathbb{R}$, $h_2:\mathbb{R}\to[0,\infty)$, we write
$h_1(\cdot)=\mathcal{O}\big(h_2(\cdot)\big)$ in order to indicate that
$\lvert h_1(\cdot)\rvert \leq C h_2(\cdot)$ for some $C>0$. Lastly, we denote the cardinality of a set $\mathcal{A}$ by $|\mathcal{A}|$.

\section{Sequential Cubic Programming Method}\label{sec:sec2}

We develop an iterative algorithm which requires computing a trial step in each iteration. In order to compute this trial step, we follow the approach originally proposed in \cite{pei2023sequential}. There, a cubic sequential method is introduced and global convergence to first-order critical points is shown. However, second-order stationarity was not considered, and complexity theory was out of the scope of the latter. Here, we modify their method in order to achieve global and local convergence to higher order critical points and guarantee the best known iteration complexity bounds for problems of the form (\ref{eq:1}).\par

To be more specific, in every iteration we (approximately) minimize a local cubic model of the Lagrangian function subject to a relaxed linearized version of the constraints:
\vspace{-1cm}\begin{center}
\begin{equation*}{(SCP_k)\;\;\;\;\;\;\;\;}
\begin{array}{ll}
    \min\limits_{d\in\mathbb{R}^n}m_k(d):=\displaystyle f_k+g_k^\top d+\frac{1}{2}d^\top H_k d +\frac{ \sigma_k}{3}\norm{d}^3\\
    \hspace{0.08cm}\text{s.t.}\;\; A_k d+\beta_k c_k=0
    \end{array}
\end{equation*}
\end{center}
Here, $\sigma_k>0$ represents the cubic regularization parameter that characterizes cubic regularization methods \cite{cartis2011evaluation}. The scalar $\beta_k\in(0,1]$ appears in trust-region methods (e.g., \cite{byrd1987trust}) and acts as a feasibility-control parameter. This relaxation will allows us to work towards simultaneously reducing both the objective function and the constraint violation. The symmetric matrix $H_k$ represents the Hessian of the Lagrangian at iteration $k$, defined by
\begin{equation}\label{eq:Hessian}
   H_k:=\nabla^2 f_k+\sum_{i=1}^{m}\lambda_{k}^{i}\nabla^2 c_k^{i}. 
\end{equation}
Although approximations are plausible in a similar manner to \cite{cartis2011adaptive, cartis2011adaptive2}, we choose to work with the exact Hessian to simplify the exposition; we defer the development and analysis of such approximations to future work.  \par
The $(SCP_k)$ subproblem represents a special case of constrained cubic regularization optimization problems, which are in general hard to solve. However, it can be solved via a step decomposition technique. In particular, we follow a decomposition that originates from the theory of sequential quadratic programming \cite[Chapter 18]{nocedal2006numerical}. Namely, the step $d_k$ is decomposed into two components, as follows:
\begin{equation}\label{eq:decomposition}
    d_k=v_k+u_k .
\end{equation}
Here, $v_k$ is referred to as the \textit{normal} step which lies in $\mathcal{R}(A_k^\top)$, and $u_k$ represents the \textit{tangential} step, which belongs to $\mathcal{N}(A_k)$. A standard choice for the normal step in SQP methods is based on exact feasibility. That is, for the $(SCP_k)$, one has (see \cite{pei2023sequential})
\begin{equation}\label{eq:normalstep1}
    v_k=\beta_k v_k^c,
\end{equation}
where
\begin{equation}\label{eq:normalstep2}
    v_k^c=-A_k^\top(A_k A_k^\top)^{-1}c_k.
\end{equation}
To guarantee that $v_k^c$ is well-defined at every iteration, we need an assumption that is commonly known as \textit{linear independence constraint qualification} (LICQ). The latter implies that $A_k$ has full row rank for all $k\geq 0$, so that the inverse of $A_k A_k^\top$ is always well-defined. The LICQ assumption is given explicitly, among others, at the beginning of Section \ref{sec:sec3}.\par

One of the main goals of this paper is to prove the desired convergence and complexity results by not requiring exact solutions for any of the subproblems considered. For that reason, instead of choosing $v_k$ as in (\ref{eq:normalstep1})-(\ref{eq:normalstep2}), we solve the corresponding least-squares problem inexactly. In fact, the normal step $v_k$ is chosen as in (\ref{eq:normalstep1}), where $v_k^c$ lies in $\mathcal{R}(A_k^\top)$ and satisfies\footnote{We require the $\ell$-1 norm on the left-hand side of this inequality in order to avoid factors of $\sqrt{m}$ appearing in the analysis due to equivalence of norms. This condition could be modified to the $\ell$-2 norm and our analysis would still hold, albeit with these additional factors.}
\begin{equation}\label{eq:normalstep3}
   \norm{A_k v_k^c+c_k}_1\leq r_v \min\left\{\norm{c_k}_1,\norm{v_k^c}^3\right\},
\end{equation}
for $r_v\in[0,1-\tau)$, where $\tau\in(0,1)$ is some (user-chosen) constant. Of course, if $c_k=0$, then we simply set $v_k=v_k^c=0$. In addition, given $\norm{c_k}_1\neq 0$, we choose the feasibility parameter $\beta_k$ in a way similar to \cite{byrd1987trust, pei2023sequential}, namely
\begin{equation}\label{eq:beta}
    \beta_k\in\left[\min\left\{1,\frac{\theta}{\norm{v_k^c}\sqrt{\sigma_k}}\right\},\min\left\{1,\frac{1}{\norm{v_k^c}\sqrt{\sigma_k}}\right\}
  \right],
\end{equation}
for some $\theta \in (0,1]$. This parameter controls the length of the trial step in the range space of the Jacobian, meaning that it ensures $\norm{v_k}\leq \frac{1}{\sqrt{\sigma_k}}$ at every iteration. In the theory of cubic regularization methods, it is oftentimes convenient to think of the quantity $\frac{1}{\sqrt{\sigma_k}}$ as a trust-region radius, since it fundamentally plays the same role. This is also the case for our analysis, as we implicitly restrict the search for a trial step to a bounded region governed by the cubic (penalty) parameter.

Assuming that the normal step approximately satisfies the constraints of the problem $(SCP_k)$, the latter reduces to an unconstrained cubic regularization problem of the form

\begin{equation} \label{eq:mku}
    \min\limits_{u\in\mathbb{R}^n}m_k^U(u):=f_k+ \widetilde{g}_k ^\top u+\frac{1}{2}u^\top \widetilde{H}_k  u +\frac{ \sigma_k}{3}\norm{P_k u}^3,
\end{equation}

\noindent where $P_k$ represents the orthogonal projection matrix of $\mathcal{N}(A_k)$, and $\widetilde{g}_k:=P_k^\top (g_k+ H_k v_k)$, $\widetilde{H}_k:=P^\top_k H_k P_k$. The assumption that the computed solution $u_k$ of \eqref{eq:mku} lies in the null space of the Jacobian is always satisfied for exact minima, as $\nabla_u m_k^U(u_k)=0$ implies $\nabla_u m_k^U(P_k u_k)=0$, due to the property $P_k^2=P_k$. This assumption is also satisfied for approximate solutions 
whenever a Krylov subspace method is applied, a number of which have already been developed for this class of problems; see, for instance, \cite{bellavia2025regularized, carmon2018analysis, cartis2011adaptive, jia2022solving}. It should be highlighted that the LICQ assumption allows one to also work with an orthonormal basis $Z_k$ of $\mathcal{N}(A_k)$ instead of the projection matrix $P_k$ (and thus with the usual tangential step $Z_k u_k$ \cite{nocedal2006numerical}) in \eqref{eq:decomposition}, to form another equivalent null space reduction of $(SCP_k)$. In fact, the replacement of $P_k$ with $Z_k$ can be done effectively in our analysis with minimal changes in the proofs (see Lemma \ref{lem:dualvariables}). The question of choosing  one over the other, reduces to a matter of dimensionality: If the Jacobian is large and sparse, then it is cost-efficient to work with products of the form $P_k y$ via a Krylov subspace method (without forming the projection matrix explicitly), thus avoiding the fill-in issues associated with constructing an orthonormal basis. This advantage is stronger when the null space has high dimension, in which case $P_k$ equals the identity matrix plus a low-rank correction (see Appendix \ref{sec:AppB}). On the other hand, if the null space of $A_k$ has small dimension, it is usually preferable to directly compute an orthonormal basis $Z_k$ (e.g., via QR factorization), as it can facilitate the process of finding an approximate solution to \eqref{eq:mku}: the latter process relies on computing the leftmost eigenvalue of the reduced Hessian $Z_k^\top H_k Z_k$ (see \hyperlink{or3}{(OR3)} below), which would then be a small dense matrix.\par

While the cubic regularization subproblem can be solved efficiently by the aforementioned Krylov methods, or others \cite{birgin2019newton, gould2010solving, lieder2020solving}, we only require approximate solutions. In particular, we assume the existence of an Oracle that produces sufficiently good reduction in the model of (\ref{eq:mku}) at every iteration. We define the predicted reduction of the model in the null space of $A_k$,
\begin{equation}
    \Delta m_k^U(u_k):=m_k^U(0)-m_k^U(u_k),
\end{equation}
for every candidate solution $u_k$. We consider the following:

\noindent\underline{\textit{SCP Oracle}}: At iteration $k\geq0$, given the quantities $\sigma_k,v_k,f_k,g_k,H_k$, return an (in)exact solution $u_k\in\mathcal{R}(P_k)$ of (\ref{eq:mku}) satisfying:
\hypertarget{or1}{}\hypertarget{or2}{}\hypertarget{or3}{}

    \noindent\hyperlink{or1}{(OR1)} $\Delta m_k^U(u_k)\geq \Delta m_k^U(u_k^c)$, where $u_k^c:=-\alpha_k^*\widetilde{g_k}$ for $\alpha_k^*:=\argmin_{\alpha\geq 0} m_k^U(-\alpha \widetilde{g_k})$.
    
    \noindent\hyperlink{or2}{(OR2)}  $\norm{\nabla m_k^U(u_k)}\leq\delta \sigma_k \norm{u_k}^2$, where $\delta\in(0,1/6)$.
    
     \noindent\hyperlink{or3}{(OR3)} $\lambda_{\min}(P_k^\top H_k P_k)\geq -\sigma_k\norm{u_k}$.

The first and third conditions are standard in related works in both constrained and unconstrained optimization that rely on solutions of cubic subproblems \cite{cartis2011adaptive, cartis2013evaluation}: \hyperlink{or1}{(OR1)} corresponds to the minimum reduction required for convergence to first-order stationarity - provided by the Cauchy step - whereas \hyperlink{or3}{(OR3)} translates to an implicit utilization of negative curvature, when such curvature exists, as required for convergence to second-order stationarity. Note that the third requirement also encourages the use of the projection matrix instead of an orthonormal basis, as the computation of the minimum eigenvalue of $\widetilde{H}_k$ can be done efficiently with the Lanczos process \cite{golub2000large}. In addition, \hyperlink{or2}{(OR2)} demands a (usually) stricter sufficient decrease of the model compared to \hyperlink{or1}{(OR1)}, when one is interested in higher-order critical points. This differs slightly from the reduction criterion that appears in the early works of Cartis et al. \cite{cartis2011adaptive, cartis2013evaluation}. If we were to rigorously transfer their second criterion into our setting, we would acquire the alternative 
\hypertarget{or2prime}{}

    \noindent\hyperlink{or2prime}{(OR2')}  $\nabla m_k^U(u_k)^\top u_k=0$ and $\norm{\nabla m_k^U(u_k)}\leq \kappa_{\theta}\min\{1,\norm{u_k}\}\norm{\widetilde{g_k}}$ for some $\kappa_{\theta}\in(0,1)$.

We find ``$\nabla m_k^U(u_k)^\top u_k=0$" a strong requirement, thus we choose to resort to a KKT residual error of order $\norm{u_k}^2$ \hyperlink{or2}{(OR2)}, in the hope that this will be easier to satisfy in practice. It should be noted that \hyperlink{or2prime}{(OR2')} also works for our analysis (see Remark \ref{rem:criterion}).\par

A natural question is the following: Can we guarantee that a point satisfying \hyperlink{or1}{(OR1)}-\hyperlink{or3}{(OR3)} always exists? The next lemma shows that this is the case.

\begin{lemma}
    Suppose $u^*_k$ is an optimal solution of \eqref{eq:mku}. Then, $u_k^*$ satisfies \hyperlink{or1}{(OR1)}-\hyperlink{or3}{(OR3)}.
\end{lemma}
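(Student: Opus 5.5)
The plan is to verify the three conditions one at a time, using only that $u_k^*$ is a global minimizer of \eqref{eq:mku}. Conditions \hyperlink{or1}{(OR1)} and \hyperlink{or2}{(OR2)} are immediate. \hyperlink{or3}{(OR3)} needs the classical second-order characterization of global minimizers of cubic models due to Nesterov--Polyak and Cartis et al., transported to the null space of $A_k$.

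\emph{Preliminary reduction.} Since $\widetilde g_k = P_k(g_k+H_kv_k)$, $\widetilde H_k = P_kH_kP_k$ and $P_k^2=P_k=P_k^\top$, the model satisfies $m_k^U(u)=m_k^U(P_ku)$. Hence, if $u_k^*$ is a global minimizer, so is $P_ku_k^*$. We may therefore assume $u_k^*\in\mathcal{R}(P_k)$, so that $P_ku_k^*=u_k^*$, as the Oracle requires.

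Now fix an orthonormal basis $Z_k$ of $\mathcal{N}(A_k)$, so that $P_k=Z_kZ_k^\top$, and write $u=Z_kw$. On $\mathcal{R}(P_k)$ the model becomes the standard cubic model
\[
\phi(w)=f_k+(Z_k^\top\widetilde g_k)^\top w+\tfrac12 w^\top M w+\tfrac{\sigma_k}{3}\norm{w}^3,
\qquad M:=Z_k^\top H_kZ_k .
\]
The point $w^*:=Z_k^\top u_k^*$ is a global minimizer of $\phi$, with $\norm{w^*}=\norm{u_k^*}$.

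\emph{(OR1) and (OR2).} The Cauchy point $u_k^c=-\alpha_k^*\widetilde g_k$ is one particular point of $\mathbb{R}^n$. Global optimality therefore gives $m_k^U(u_k^*)\le m_k^U(u_k^c)$, which is \hyperlink{or1}{(OR1)}. The function $u\mapsto\norm{P_ku}^3$ is continuously differentiable, with gradient $3\norm{P_ku}P_ku$, so $m_k^U$ is $C^1$. First-order optimality then gives $\nabla m_k^U(u_k^*)=0$, and \hyperlink{or2}{(OR2)} holds trivially.

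\emph{(OR3), the main step.} I will show $M+\sigma_k\norm{w^*}I\succeq 0$. Stationarity of $\phi$ reads
\[
Z_k^\top\widetilde g_k=-(M+\sigma_k\norm{w^*}I)\,w^* .
\]
Take any $w$ with $\norm{w}=\norm{w^*}$. Then the cubic terms of $\phi(w)$ and $\phi(w^*)$ coincide. Substituting the stationarity relation and expanding, one obtains the identity
\[
\phi(w)-\phi(w^*)=\tfrac12 (w-w^*)^\top\big(M+\sigma_k\norm{w^*}I\big)(w-w^*),
\]
where the equal norms are used to cancel the term $\sigma_k\norm{w^*}\big(\norm{w}^2-\norm{w^*}^2\big)$. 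Global optimality makes the left side nonnegative.

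The remaining step is to pass from the difference vectors $w-w^*$ to all directions; this is where care is needed.
\begin{itemize}
\item If $w^*=0$, the required inequality $M\succeq 0$ follows directly from second-order necessary conditions at $0$.
\item If $w^*\neq 0$, take any $y$ with $y^\top w^*\neq 0$ and set $w=w^*-2\frac{y^\top w^*}{\norm{y}^2}\,y$. This $w$ lies on the sphere of radius $\norm{w^*}$, and $w-w^*$ is a nonzero multiple of $y$. Hence $y^\top\big(M+\sigma_k\norm{w^*}I\big)y\ge 0$ for all such $y$. The vectors orthogonal to $w^*$ are handled by density and continuity.
\end{itemize}
Thus $\lambda_{\min}(M)\ge-\sigma_k\norm{u_k^*}$.

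Finally, $P_kH_kP_k=Z_kMZ_k^\top$. Its spectrum consists of the eigenvalues of $M$ together with zeros on $\mathcal{R}(A_k^\top)$, and these zeros satisfy $0\ge-\sigma_k\norm{u_k^*}$. Hence $\lambda_{\min}(P_k^\top H_kP_k)\ge-\sigma_k\norm{u_k^*}$, which is \hyperlink{or3}{(OR3)}.
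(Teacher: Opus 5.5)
Your proof is correct and follows the same route as the paper. You reduce without loss of generality to $u_k^*\in\mathcal{R}(P_k)$ via $m_k^U(u)=m_k^U(P_ku)$, use $\nabla m_k^U(u_k^*)=0$ for (OR2), and obtain (OR3) from the global-minimizer characterization of the cubic model restricted to $\mathcal{N}(A_k)$.

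The only difference is in (OR3). The paper cites \cite[Lemma 3.2]{cartis2011adaptive} for $Z_k^\top H_k Z_k+\sigma_k\norm{u_k^*}I\succeq 0$, whereas you reprove this with the equal-norm identity and the reflection/density argument. You then transfer the bound to $P_k^\top H_k P_k$, which is the content of Lemma \ref{lem:dualvariables}(ii).
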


\begin{proof}
    We first observe that, if $u_k^*$ is an optimal solution, then its projection still minimizes \eqref{eq:mku}, due to the property $P_k=P_k^2$. Thus, we may assume without loss of generality that $u_k^*\in\mathcal{R}(P_k)$. As $\nabla m^U_k(u_k^*)=0$, the result follows from the fact that $\widetilde{g_k}\in\mathcal{N}(A_k)$ and \cite[Lemma 3.2]{cartis2011adaptive}.
\end{proof}

\subsection{The Lagrangian multipliers}\label{sec:sec2.1}

The dual variables $\lambda_k\in\mathbb{R}^{m}$ are taken to be approximations of the least-squares estimators
\begin{equation}\label{eq:lag2}
    \lambda_k^*:=-(A_k A_k^\top)^{-1}A_k g_k,
\end{equation}
given that these are well-defined. Specifically, the Lagrangian multipliers satisfy
\begin{equation}\label{eq:lambdas}
    \norm{A_k\left(g_k+A_k^\top \lambda_k\right)}\leq r_{\lambda}\norm{v_k}
\end{equation}
for some $r_\lambda\geq0$. An important realization of this approximation arises at the limit of the SCP procedure: as the algorithm converges to feasibility, it drives the dual variables to the least-square estimators. As we will see in Section \ref{sec:sec5}, one of the main reasons why the least squares estimators are a suitable choice for the dual variables, is because they also enforce fast local quadratic convergence.

\subsection{Merit function}\label{sec:sec2.2}

To decide whether or not the step $d_k$ will be accepted at each iteration, we introduce the standard non-smooth $l_1$ merit function with parameter $\mu>0$,
\begin{equation}
    \varphi(x,\mu):=f(x)+\mu\norm{c(x)}_1.
\end{equation}
Further, we consider a local cubic model of the merit function at each iteration,
\begin{equation}
    q_k(d_k):=f_k+g_k^\top d_k +\frac{1}{2}d_k^\top H_k d_k+\frac{ \sigma_k}{3}\norm{d_k}^3+\mu_k\norm{c_k+A_k d_k}_1.
\end{equation}
We denote the decrease in the model by
\begin{equation}
    \Delta q_k:=q_k(0)-q_k(d_k).
\end{equation}
A unique feature of our method is how the merit parameter update is performed in every iteration. Particularly, at every iteration $k\geq 0$, we set
\begin{equation}\label{eq:merit1}
    \mu_k:= \begin{cases} 
      \nu \mu_k^c\; &\;\;\;\mu_{k-1}<\mu_k^c,  \\
       & \\
      \mu_{k-1} &  \;\;\;\mu_{k-1}\geq \mu_k^c,
   \end{cases}
\end{equation}
where $\nu>1$ is a fixed constant and
\begin{equation}\label{eq:merit2}
    \mu_k^c= \begin{cases} 
      0  &\;\;\; \norm{c_k}_1=0, \\
       & \\
      \displaystyle\frac{g_k^\top v_k+\frac{1}{2}v_k^\top H_k v_k+\frac{\sigma_k}{3}\left[\norm{d_k}^3-\norm{u_k}^3\right]}{(1-r_v-\tau)\beta_k \norm{c_k}_1} &  \;\;\;\norm{c_k}_1\neq 0,
   \end{cases}
\end{equation}
where $r_v$ and $\tau$ are defined in \eqref{eq:normalstep3}.

This, perhaps unusual, update choice serves the following goal: the desired progress of the algorithm should explicitly depend on both the predicted reduction of the reduced cubic model and the constraint violation.

\begin{lemma}\label{lem:reduction}
    Let $\tau\in(0,1)$. Then, $\Delta q_k\geq \Delta m_k^U(u_k)+\tau\mu_k \beta_k \norm{c_k}_1$ for every $k\geq 0$.
\end{lemma}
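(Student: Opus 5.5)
The plan is a direct computation. I will expand $\Delta q_k$ using the decomposition \eqref{eq:decomposition}, identify the tangential part with $\Delta m_k^U(u_k)$, and bound the remaining terms. The feasibility terms are handled by \eqref{eq:normalstep3}, and the leftover cross terms by the merit parameter rule \eqref{eq:merit1}--\eqref{eq:merit2}.

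\textbf{Step 1 (constraint part).} Since $u_k\in\mathcal{R}(P_k)=\mathcal{N}(A_k)$, we have $A_kd_k=A_kv_k=\beta_kA_kv_k^c$. Writing
\[
c_k+A_kd_k=(1-\beta_k)c_k+\beta_k\left(c_k+A_kv_k^c\right),
\]
the triangle inequality, $\beta_k\in(0,1]$ and \eqref{eq:normalstep3} give $\norm{c_k+A_kd_k}_1\le(1-\beta_k)\norm{c_k}_1+\beta_kr_v\norm{c_k}_1$. Hence, provided $\mu_k\ge0$,
\[
\mu_k\left(\norm{c_k}_1-\norm{c_k+A_kd_k}_1\right)\ge(1-r_v)\mu_k\beta_k\norm{c_k}_1 .
\]
Nonnegativity of $\mu_k$ follows from \eqref{eq:merit1} by induction, assuming the initial value $\mu_{-1}\ge0$.

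\textbf{Step 2 (objective part).} Because $u_k\in\mathcal{R}(P_k)$ and $P_k$ is a symmetric idempotent, we have $P_ku_k=u_k$. It follows that $\widetilde g_k^\top u_k=(g_k+H_kv_k)^\top u_k$, $u_k^\top\widetilde H_ku_k=u_k^\top H_ku_k$ and $\norm{P_ku_k}=\norm{u_k}$. Therefore
\[
\Delta m_k^U(u_k)=-\left(g_k^\top u_k+u_k^\top H_kv_k+\tfrac12u_k^\top H_ku_k+\tfrac{\sigma_k}{3}\norm{u_k}^3\right).
\]
Expanding $g_k^\top d_k+\tfrac12d_k^\top H_kd_k+\tfrac{\sigma_k}{3}\norm{d_k}^3$ with $d_k=v_k+u_k$ and combining with Step 1 yields
\[
\Delta q_k\ge\Delta m_k^U(u_k)-\Big[g_k^\top v_k+\tfrac12v_k^\top H_kv_k+\tfrac{\sigma_k}{3}\left(\norm{d_k}^3-\norm{u_k}^3\right)\Big]+(1-r_v)\mu_k\beta_k\norm{c_k}_1 .
\]

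\textbf{Step 3 (merit parameter).} If $\norm{c_k}_1=0$, then $v_k=0$ and $d_k=u_k$, so the bracket vanishes and the claim follows immediately. Otherwise, the bracket equals $\mu_k^c(1-r_v-\tau)\beta_k\norm{c_k}_1$ by \eqref{eq:merit2}. It then suffices to show $\mu_k\ge\mu_k^c$, which follows from \eqref{eq:merit1} by two cases:
\begin{itemize}
\item if $\mu_{k-1}<\mu_k^c$, then $\mu_k^c>0$ and $\mu_k=\nu\mu_k^c>\mu_k^c$ since $\nu>1$;
\item otherwise $\mu_k=\mu_{k-1}\ge\mu_k^c$.
\end{itemize}
Since $1-r_v-\tau>0$, the bracket is at most $(1-r_v-\tau)\mu_k\beta_k\norm{c_k}_1$. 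Subtracting this from $(1-r_v)\mu_k\beta_k\norm{c_k}_1$ leaves $\tau\mu_k\beta_k\norm{c_k}_1$, which is the claim.

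\textbf{Main obstacle.} The only delicate point is the bookkeeping in Step 2: the cross term $u_k^\top H_kv_k$ has to be absorbed into $\Delta m_k^U$ through the definition of $\widetilde g_k$. This works only because $u_k$ lies in the range of the projector, which is why the Oracle is required to return $u_k\in\mathcal{R}(P_k)$.
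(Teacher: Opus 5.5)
Your proof is correct and takes essentially the same route as the paper. You expand $\Delta q_k$ via $d_k=v_k+u_k$ to isolate $\Delta m_k^U(u_k)$ as in \eqref{eq:oops1}, bound the linearized constraint decrease below by $(1-r_v)\beta_k\norm{c_k}_1$ using \eqref{eq:normalstep3} as in \eqref{eq:oops3}, and absorb the remaining normal-step terms through $\mu_k\ge\mu_k^c$ from \eqref{eq:merit1}--\eqref{eq:merit2}; beyond that, you only make explicit details the paper leaves implicit, namely $P_ku_k=u_k$ (so $\widetilde g_k$ absorbs the cross term $u_k^\top H_kv_k$), the nonnegativity of $\mu_k$, and the two-case check that $\mu_k\ge\mu_k^c$.
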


\begin{proof}
By definition we have
\begin{equation}\label{eq:oops1}
\Delta q_k=\Delta m_k^U(u_k)-g_k^\top v_k-\frac{1}{2}v_k^\top H_k v_k-\frac{\sigma_k}{3}\left(\norm{d_k}^3-\norm{u_k}^3\right)+\mu_k\left(\norm{c_k}_1-\norm{c_k+A_k d_k}_1\right).
\end{equation}
Hence, it suffices to show that
\begin{equation}\label{eq:oops2}
    \mu_k\left(\norm{c_k}_1-\norm{c_k+A_k d_k}_1-\tau\beta_k\norm{c_k}_1\right)\geq g_k^\top v_k+\frac{1}{2}v_k^\top H_k v_k+\frac{\sigma_k}{3}\left(\norm{d_k}^3-\norm{u_k}^3\right).
\end{equation}
The approximation \eqref{eq:normalstep3} gives
\begin{align}
    \norm{c_k}_1-\norm{c_k+A_k d_k}_1 &=\norm{c_k}_1-\norm{\beta c_k+\beta_k A_k v_k^c +(1-\beta_k)c_k}_1 \nonumber \\
    &\geq \norm{c_k}_1-\beta_k \norm{c_k+A_k v_k^c}_1-(1-\beta_k)\norm{c_k}_1 \nonumber \\
    &\geq \norm{c_k}_1-\beta_k r_v \norm{c_k}_1-(1-\beta_k)\norm{c_k}_1 \nonumber \\
    &=\beta_k(1-r_v)\norm{c_k}_1. \label{eq:oops3}
\end{align}
If $c_k=0$, then \eqref{eq:oops1} reduces to $\Delta q_k=\Delta m_k^U(u_k)$ (recall that if $c_k=0$, we set $v_k=v_k^c=0$ and $\beta_k=1$ without loss of generality). Otherwise, the result follows from \eqref{eq:merit2}, \eqref{eq:oops2} and \eqref{eq:oops3}, as $\mu_k\geq \mu_k^c$. 
\end{proof}

\subsection{The Maratos effect and Negative Curvature}\label{sec:sec2.3}

An undesirable phenomenon called the ``Maratos effect" can arise when iterates lie around saddle points. In this case, steps which would lead to superlinear convergence are rejected by the merit function, causing a significant slowdown in algorithmic progress, due to the step increasing the constraint violation sharply. Similarly, in certain circumstances, steps along a direction of negative curvature of the Lagrangian can lead to an increase in both the objective and constraint violation and thus will never be accepted by the merit function, prohibiting global convergence (see \cite[Section 4]{byrd1987trust} for a concrete example). In order to avoid such cases, a common solution is to incorporate a second-order correction step \cite[Chapter 15]{nocedal2006numerical}, which is defined as
 \begin{equation}\label{eq:correction}
        w_k^*:=-A_k^\top(A_k A_k^\top)^{-1}c(x_k+d_k).
    \end{equation}
This additional step captures the curvature information of the constraints, thereby allowing the optimization procedure to move away from saddle points. Still, this particular choice is not a necessity, and one can utilize approximate solutions of the corresponding least square problem, as is the case with the trial step and dual variables considered. We require the correction steps to satisfy $w_k\in \mathcal{R}(A_k^\top)$ and
\begin{equation}\label{eq:w}
    \norm{A_k w_k+c(x_k+d_k)}\leq r_w\norm{d_k}^3,
\end{equation}
for some $r_w\geq 0$. This choice is practically convenient in the case where the trial step is relatively large. Of course, it can be computationally expensive to compute a second-order correction at every iteration. We therefore invoke such action only near ``almost" feasible points. In fact, we take a correction step when the predicted reduction $\Delta q_k$ is not sufficiently smaller than the actual one of the cubic model, and $k\in\mathcal{K}$, where
\begin{equation}
    \mathcal{K}:=\left\{k\geq 0:\norm{v_k^c}\leq \frac{\zeta}{\sqrt{\sigma_k}}\right\},
\end{equation}
for $\zeta\in(0,\theta)$, where $\theta$ is defined in \eqref{eq:beta}. This criterion is inspired by that of Byrd et al. \cite{byrd1987trust}, but it is modified for our cubic model, directly utilizing the relationship between the cubic regularization parameter and the trust region radius.

In addition to the index set $\mathcal{K}$, we also introduce the following notation regarding the set of successful and unsuccessful iterations as well as iterations in which a second-order correction steps is computed. We denote these sets as follows:
\begin{subequations}
\begin{align}
&\mathcal{S}:=\left\{k\geq 0:\text{iteration } k\text{\;is (very) successful}\right\};\\ &\mathcal{U}:=\left\{k\geq 0:\text{iteration } k\text{\;is  unsuccessful}\right\};\\
&\mathcal{C}:=\left\{k \in \mathcal{K}: \text{a correction step is computed at iteration\;} k\right\}.
\end{align}
\end{subequations}
We note that $\mathcal{C}$ does not have to be a subset of $\mathcal{S}$. Also, it is clear that $\mathcal{S}^{c}=\mathcal{U}$.

\subsection{The SCP Algorithm}\label{sec:sec2.4}

We are now ready to present the sequential cubic programming algorithm (SCP). Before we come to its description, we define the ratio quantities that will be used to determine if sufficient reduction is achieved at each iteration:
\begin{subequations}
    \begin{align}
        \rho_k&:=\frac{\varphi(x_k,\mu_k)-\varphi(x_k+d_k,\mu_k)}{\Delta q_k}, \\
        \rho_k^{corr}&:=\frac{\varphi(x_k,\mu_k)-\varphi(x_k+d_k+w_k,\mu_k)}{\Delta q_k}.
    \end{align}
\end{subequations}

\begin{algorithm}[H]
\caption{(SCP)}
\label{alg:scp}
\begin{algorithmic}[]
\State \textbf{Input:} $x_0$, $\sigma_0\geq\sigma_{\min}>0$, $\mu_{-1}>0$, $0<\eta_1<\eta_2<1$, $\nu>1$, $\tau\in(0,1)$, $\theta\in(0,1)$, $\zeta\in(0,\theta)$, $\gamma_2>\gamma_1>1$, $\gamma_3\in(0,1]$, $\delta\in(0,1)$, $r_v\in[0,1-\tau)$, $r_\lambda\geq 0$, $r_w\geq 0$.
\State $k \gets 0$
\While {$x_k$ does not satisfy \eqref{eq:fosp}-\eqref{eq:sosp}}
    \State Compute the normal step $v_k$ so that it satisfies \eqref{eq:normalstep1}-\eqref{eq:normalstep3}.
    \State Compute the dual variables $\lambda_k$ so that they satisfy \eqref{eq:lambdas}.
    \State Compute the tangential step $u_k$ via the SCP Oracle.
    \State Compute $\mu_k^c$ and update $\mu_k$ as in (\ref{eq:merit1})-(\ref{eq:merit2}).
    \If {$\rho_k\geq\eta_1$} (no second-order correction)
    \State Set $x_{k+1}=x_k+d_k$
    \ElsIf {\;$k\in\mathcal{K}$}  (second-order correction)
    \State Compute the correction vector $w_k$ so that it satisfies (\ref{eq:w}).
    \If{$\rho_k^{corr}\geq\eta_1$}
    \State Set $x_{k+1}=x_k+d_k+w_k$
    \EndIf
    \Else
    \State Set $x_{k+1}=x_k$ 
    \EndIf
    \State Update $\sigma_{k+1}\in \begin{cases} 
[\max\{\sigma_{\min},\gamma_3\sigma_k\},\sigma_k] & \text{if } \rho_k\;(\rho_k^{corr})>\eta_2\;\text{(very successful iteration),} \\
\{\sigma_k\} & \text{if } \eta_1 \leq \rho_k\;(\rho_k^{corr}) \leq \eta_2\;\text{(successful iteration),} \\
[\gamma_1\sigma_k,\gamma_2\sigma_k] & \text{otherwise\;(unsuccessful iteration).}
\end{cases}$
    \State $k \gets k + 1$
\EndWhile
\end{algorithmic}
\end{algorithm}

Following the structure of the original ARC algorithm \cite{cartis2011adaptive}, when an iteration is unsuccessful we increase the cubic parameter with the hope that the ``cubic penalization" of the objective function in (\ref{eq:mku}) will lead to an accepted step. On the other hand, if an iteration is very successful, we have the freedom of putting less mass on the cubic term of the subproblem, whereas if an iteration provides sufficient reduction, we keep the same cubic regularization parameter. The latter update differs from the standard ARC method \cite{cartis2011adaptive, cartis2011adaptive2}, where one usually has the choice of slightly increasing the cubic parameter at successful (but not very successful) iterations. Namely, if the ratio lies in $[\eta_1,\eta_2]$, then one sets $\sigma_{k+1}\in[\sigma_k,\gamma_1 \sigma_k]$. From a practical point of view, this update choice offers no significant advantage over simply setting $\sigma_{k+1}=\sigma_k$ (see the discussion at Sections 2 and 7 of \cite{cartis2011adaptive}). We therefore resort to keeping the same cubic parameter when sufficient progress towards a solution is made.

\section{First-order stationary points}\label{sec:sec3}

We start by providing convergence guarantees to first-order stationary points. We also prove worst-case complexity bounds that match the best known bounds for equality constrained problems \cite{berahas2025sequential, curtis2024worst}. We make the following assumptions throughout the paper:\hypertarget{f1}{}\hypertarget{f2}{}\hypertarget{f3}{}\hypertarget{f4}{}

\textbf{\hyperlink{f1}{(F1)}}: \textit{The functions $f,c$ belong to $C^2$ over $\mathcal{X}$. Moreover, the functions $f,\;g,\;c_i,\;\nabla c_i$, for all $i \in \{1,\dots,m\},$ are bounded over $\mathcal{X}$ by constants $\kappa_f,\kappa_g,\kappa_{c_i},\kappa_{\nabla c_i}$, respectively. Further, they are Lipschitz continuous over $\mathcal{X}$ with Lipschitz constants $L_f,L_g,L_{c_i}$ and $L_{\nabla c_i}$, respectively.}\par
\textbf{\hyperlink{f2}{(F2)}}: \textit{There exists $\gamma_A>0$ such that $\sigma_{\min}(A(x))\geq \gamma_A$ for every $x\in\mathcal{X}$}.  \par
\textbf{\hyperlink{f3}{(F3)}}: \textit{$\norm{H}\leq \kappa_H$ everywhere on $\mathcal{X}$ for some $\kappa_H>0$.} \par
\textbf{\hyperlink{f4}{(F4)}}: \textit{The objective function is bounded below on the space of all successful iterations, namely $f(x)\geq f_{low}$ for every $x\in\mathcal{S}$, for some $f_{low}\in\mathbb{R}$.}\\

Under Assumption \hyperlink{f1}{(F1)}, we define $\kappa_c := \sum_{i=1}^m \kappa_{c_i}$, $\kappa_A := \sum_{i=1}^m \kappa_{\nabla c_i}$, $L_c := \sum_{i=1}^m L_{c_i}$, and $L_A := \sum_{i=1}^m L_{\nabla c_i}$. Assumption \hyperlink{f2}{(F2)} is equivalent to the well-known LICQ, and guarantees that all steps considered are well-defined. Although this is a relatively strong assumption in the literature of constrained optimization, it is a common assumption in the equality constrained literature for methods with worst-case complexity results \cite{curtis2024sequential,goyens2024computing,he2023newton}.

Before we come to the main theoretical results, we make some significant remarks that only apply to first-order critical points. First, the matrix $H$ does not need to be equal to the Hessian of the Lagrangian for converence guarantees. In particular, it can be any symmetric bounded matrix. Second, the error bound \eqref{eq:normalstep3} can be relaxed to $\norm{A_k v_k^c+c_k}_1\leq r_v\norm{c_k}_1$. Further, for the computation of the tangential step only the condition \hyperlink{or1}{(OR1)} of the \textit{Oracle} is required for first-order stationarity. Lastly, it is crucial to highlight that we do not consider second-order correction steps for the first part of the paper. This is reasonable, as the main purpose of such correction approach is to avoid saddle points, the latter of which are embedded within the class of plausible first-order critical points. For that reason, when we refer to the set $\mathcal{S}$ of successful iterations of the form $x_{k+1}$, we will implicitly mean that $x_{k+1}=x_k+d_k$.

Besides nice local convergence properties (Section \ref{sec:sec5}), the choice of the least-square dual variables is also justified by the following auxiliary result, whose implicit use will become evident in what follows. We refer the reader to Appendix \ref{sec:AppB} for its proof.

\begin{lemma}\label{lem:dualvariables}
    Suppose that \hyperlink{f1}{(F1)}-\hyperlink{f2}{(F2)} hold, and consider the least-squares estimators $\lambda_k^*$ of \eqref{eq:lag2}. Then, we have 
    \begin{enumerate}[(i)]
        \item $\norm{\nabla_x\mathcal{L}(x_k,\lambda_k^*)}=\norm{P_k^\top g_k}=\norm{Z_k^\top g_k}$ for every $k\geq 0$.
        \item $\lambda_{\min}(P_k^\top H_k P_k)=\min\{\lambda_{\min}(Z_k^\top H_k Z_k),0\}$.
        \item $P(\cdot)^\top g(\cdot)$ is Lipschitz continuous over $\mathcal{X}$.
    \end{enumerate} 
\end{lemma}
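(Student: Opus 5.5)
The plan is to express $P_k$ in closed form and reduce all three claims to linear algebra. By \hyperlink{f2}{(F2)}, $A_kA_k^\top$ is invertible, so the orthogonal projector onto $\mathcal{N}(A_k)$ is
\[
P_k = I - A_k^\top (A_kA_k^\top)^{-1}A_k .
\]
This matrix is symmetric and idempotent, so $P_k^\top=P_k$. If $Z_k\in\mathbb{R}^{n\times(n-m)}$ is an orthonormal basis of $\mathcal{N}(A_k)$, then $P_k = Z_kZ_k^\top$ and $Z_k^\top Z_k = I$.

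For (i), substitute \eqref{eq:lag2} into $\nabla_x\mathcal{L}(x_k,\lambda_k^*) = g_k + A_k^\top\lambda_k^*$. This gives $g_k - A_k^\top(A_kA_k^\top)^{-1}A_kg_k = P_kg_k = P_k^\top g_k$. The second equality of (i) follows from
\[
\norm{Z_kZ_k^\top g_k}^2 = g_k^\top Z_k Z_k^\top Z_k Z_k^\top g_k = \norm{Z_k^\top g_k}^2 .
\]

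For (ii), write $P_k^\top H_kP_k = Z_k (Z_k^\top H_k Z_k) Z_k^\top$. Complete $Z_k$ to an orthogonal matrix $Q=[Z_k\ Y_k]$ with $\mathcal{R}(Y_k)=\mathcal{R}(A_k^\top)$. Then
\[
Q^\top P_k^\top H_kP_k Q = \operatorname{diag}(Z_k^\top H_kZ_k,\,0_{m\times m}).
\]
Its spectrum is therefore the spectrum of $Z_k^\top H_kZ_k$ together with $m$ zeros, and since $m\ge 1$, $\lambda_{\min}(P_k^\top H_k P_k)=\min\{\lambda_{\min}(Z_k^\top H_kZ_k),0\}$. (If $m=0$ the zero block is absent, but the problem assumes $m\geq1$.)

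Part (iii) is the main obstacle, since it requires Lipschitz continuity of the pseudo-inverse-type map $x\mapsto A(x)^\top(A(x)A(x)^\top)^{-1}A(x)$. The plan is to write
\[
P(x)g(x)-P(y)g(y) = P(x)\bigl(g(x)-g(y)\bigr) + \bigl(P(x)-P(y)\bigr)g(y).
\]
The first term is bounded by $L_g\norm{x-y}$ because $\norm{P(x)}\le 1$. For the second, $\norm{g(y)}\le\kappa_g$, so it suffices to show $P$ is Lipschitz. Set $M(x)=A(x)A(x)^\top$. By \hyperlink{f1}{(F1)}, $\norm{A(x)}\le\kappa_A$ and $\norm{A(x)-A(y)}\le L_A\norm{x-y}$, with Frobenius/2-norms comparable up to constants. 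Then
\[
\norm{M(x)-M(y)} \le 2\kappa_AL_A\norm{x-y}.
\]
By \hyperlink{f2}{(F2)}, $\norm{M(x)^{-1}}\le\gamma_A^{-2}$. The identity $M(x)^{-1}-M(y)^{-1} = M(x)^{-1}(M(y)-M(x))M(y)^{-1}$ then gives a Lipschitz constant $2\kappa_AL_A\gamma_A^{-4}$ for $M^{-1}$. Finally, telescope the triple product $A^\top M^{-1}A$, changing one factor at a time, to get $\norm{P(x)-P(y)}\le (2\kappa_A L_A\gamma_A^{-2}+2\kappa_A^3L_A\gamma_A^{-4})\norm{x-y}$. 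This yields a Lipschitz constant for $P(\cdot)^\top g(\cdot)$ of $L_g + \kappa_g(2\kappa_A L_A\gamma_A^{-2}+2\kappa_A^3L_A\gamma_A^{-4})$.

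The convexity of $\mathcal{X}$ is only needed implicitly: (F1) supplies the Lipschitz bounds on all of $\mathcal{X}$, and (F2) the uniform lower bound on $\sigma_{\min}(A(x))$.
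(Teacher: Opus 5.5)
Your proposal is correct and follows essentially the same route as the paper's proof in Appendix~B. Both arguments use $\nabla_x\mathcal{L}(x_k,\lambda_k^*)=(I-A_k^\top(A_kA_k^\top)^{-1}A_k)g_k=Z_kZ_k^\top g_k$ for (i), and the orthogonal completion $[Z_k\ Y_k]$ yielding $\mathrm{diag}(Z_k^\top H_kZ_k,0)$ for (ii). For (iii), both prove Lipschitz continuity of $(AA^\top)^{-1}$ via $M(x)^{-1}-M(y)^{-1}=M(x)^{-1}(M(y)-M(x))M(y)^{-1}$, telescope $A^\top M^{-1}A$ into three terms, and finish with the product bound. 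The only difference is that you track explicit Lipschitz constants, which the paper leaves implicit.
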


We in fact have $\nabla_x\mathcal{L}(x_k,\lambda_k^*)=P_k^\top g_k$ (see \eqref{eq:c1}). As a consequence, one has the flexibility of working with any of the three quantities that appear in \emph{(i)}, depending on the context. We follow this strategy for the remainder of this paper. Further, the second equality in \emph{(i)} along with \emph{(ii)} indicate that we can effectively replace $P_k$ with $Z_k$ everywhere in the current work, if the latter is easier to compute (e.g., via QR factorization) or if it expedites the process of solving \eqref{eq:mku} at each iteration.

\subsection{First-order Convergence}\label{sec:sec3.1}

Our goal in this section is to prove the following theorem.

\begin{theorem}\label{th:firstorderconvergence}
    Suppose \hyperlink{f1}{(F1)}-\hyperlink{f4}{(F4)} hold. Then, $\lim\limits_{k\to\infty}\norm{\nabla_x\mathcal{L}(x_k,\lambda_k)}=0$ and $\lim\limits_{k\to\infty}\norm{c_k}_1=0$.
\end{theorem}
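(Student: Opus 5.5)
Our plan follows the standard ARC/trust-region template: bound the cubic parameter, show the merit parameter eventually stabilizes, then use the telescoping sum of merit decreases over successful iterations.

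\textbf{Step 1 (model decrease).} From Lemma~\ref{lem:reduction} and \hyperlink{or1}{(OR1)}, we derive the Cauchy-type bound
\[
\Delta q_k\geq \kappa\,\|\widetilde g_k\|\min\Big\{\frac{\|\widetilde g_k\|}{1+\kappa_H},\sqrt{\frac{\|\widetilde g_k\|}{\sigma_k}}\Big\}+\tau\mu_k\beta_k\|c_k\|_1 ,
\]
obtained as in \cite[Lemma 2.1]{cartis2011adaptive} applied to $m_k^U$ on $\mathcal{R}(P_k)$. We relate $\widetilde g_k=P_k(g_k+H_kv_k)$ to $\nabla_x\mathcal{L}(x_k,\lambda_k^*)=P_k^\top g_k$ via Lemma~\ref{lem:dualvariables}(i) and $\|v_k\|\le\min\{\|v_k^c\|,\sigma_k^{-1/2}\}$. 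Since $\|v_k^c\|\le\|c_k\|/\gamma_A$ by (F2), we get $\|\widetilde g_k\|\ge\|P_k^\top g_k\|-\kappa_H\|c_k\|/\gamma_A$. For the constraint term, \eqref{eq:beta} gives $\beta_k\|c_k\|_1\gtrsim\min\{\|c_k\|_1,\|c_k\|_1/(\|v_k^c\|\sqrt{\sigma_k})\}$, and this is bounded below by a multiple of $\min\{\|c_k\|_1,\sigma_k^{-1/2}\}$ since $\|v_k^c\|\le \kappa\|c_k\|_1$. Finally, we relate the multiplier estimate to $\lambda_k^*$ using \eqref{eq:lambdas}: $\|\nabla_x\mathcal{L}(x_k,\lambda_k)-\nabla_x\mathcal{L}(x_k,\lambda_k^*)\|\le \gamma_A^{-1}r_\lambda\|v_k\|$, which vanishes with $\|c_k\|$.

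\textbf{Step 2 (bounded $\sigma_k$ and bounded $\mu_k$).} First, we show $\mu_k^c$ is uniformly bounded. The numerator of \eqref{eq:merit2} is $O(\|v_k\|(\kappa_g+\kappa_H\|v_k\|)+\sigma_k(\|v_k\|\|d_k\|^2+\|v_k\|^2\|d_k\|))$, and we divide by $\beta_k\|c_k\|_1\gtrsim\|v_k\|\gamma_A/\kappa_A$. This requires bounding $\sigma_k\|d_k\|^2$; we bound $\|u_k\|=O(\sqrt{\kappa/\sigma_k})$-type quantities from the cubic model (the Cauchy point and any reduction-satisfying point have $\sigma_k\|u_k\|^2\lesssim\|\widetilde g_k\|+\kappa_H\|u_k\|$), so $\sigma_k\|d_k\|^2$ stays bounded whenever $\sigma_k$ is bounded below by $\sigma_{\min}$. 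Thus $\mu_k$ increases by a factor $\nu$ only finitely often and is eventually constant $\bar\mu$.

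Next, a Taylor expansion using (F1) gives
\[
|\varphi(x_k+d_k,\mu_k)-q_k(d_k)+\tfrac{\sigma_k}{3}\|d_k\|^3|\le C\|d_k\|^2 ,
\]
with $C$ depending on $L_g,L_A,\kappa_H$ and $\bar\mu$. Hence $1-\rho_k\le (C\|d_k\|^2-\tfrac{\sigma_k}{3}\|d_k\|^3)/\Delta q_k$. When $\sigma_k$ exceeds a threshold depending on the current $\|P_k^\top g_k\|$ and $\|c_k\|$ (which stay $\ge\epsilon$ on the iterations under consideration), we get $\rho_k\ge\eta_2$, so the iteration is very successful. This is the \textbf{main obstacle}: unlike ARC, $\|d_k\|^2$ must be compared to the two-part decrease, and the normal part contributes $\|v_k\|^2\le\|v_k\|\sigma_k^{-1/2}$, which must be dominated by $\tau\bar\mu\beta_k\|c_k\|_1\approx\|c_k\|\min\{1,\sigma_k^{-1/2}/\|v_k^c\|\}$. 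We would handle this by splitting into $\|v_k^c\|\sqrt{\sigma_k}\ge1$ and $<1$.

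\textbf{Step 3 (limit).} By (F4) and the eventual constancy of $\mu$, $\varphi$ is bounded below on successful iterates and nonincreasing over successful iterations, so $\sum_{k\in\mathcal{S}}\Delta q_k<\infty$. Together with Step 2, this shows there are infinitely many successful iterations unless termination occurs. We first prove $\liminf(\|P_k^\top g_k\|+\|c_k\|_1)=0$. Suppose instead both quantities are $\ge\epsilon$ for all large $k$. Then $\sigma_k\le\bar\sigma(\epsilon)$, so $\Delta q_k\ge\kappa(\epsilon)>0$ on $\mathcal{S}$, contradicting summability. To upgrade to a full limit, we use the standard argument of \cite[Cor.~2.6]{cartis2011adaptive}. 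Assume a subsequence with $\|P_k^\top g_k\|+\|c_k\|_1\ge2\epsilon$, and follow each such index to the next index with value $<\epsilon$. Over each such stretch of successful iterations, summability of $\Delta q_k\gtrsim\min\{\epsilon\|d_k\|,\epsilon^{2}\}$ forces $\sum\|d_k\|\to0$. Lipschitz continuity of $P(\cdot)^\top g(\cdot)$ (Lemma~\ref{lem:dualvariables}(iii)) and of $c$ then gives a contradiction. Finally, Step 1 transfers the limit from $\lambda_k^*$ to $\lambda_k$, proving both limits in Theorem~\ref{th:firstorderconvergence}.
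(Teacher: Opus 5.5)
There is a gap in Step 3: the contradiction hypothesis is not the negation of what you set out to prove. You aim to show $\liminf_k(\|P_k^\top g_k\|+\|c_k\|_1)=0$, but then ``suppose instead both quantities are $\ge\epsilon$ for all large $k$.'' The actual negation says only that the \emph{sum} is eventually $\ge\epsilon$, so at any given iteration one of the two terms may be arbitrarily small. Refuting your stronger hypothesis proves only that, infinitely often, \emph{one} of the two quantities drops below $\epsilon$. Under ``both $\ge\epsilon$'' the contradiction in fact comes from the constraint term alone: $\tau\mu_k\beta_k\|c_k\|_1\ge\tau\mu_{-1}\min\{\epsilon,\theta/(\kappa_v\sqrt{\sigma_k})\}$ already dominates $C\|d_k\|^2\le C\kappa_d^2/\sigma_k$ for large $\sigma_k$. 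So no information about $P_k^\top g_k$ is ever extracted. Your upgrade step then has nothing to start from, because it needs the combined measure to fall below $\epsilon$ infinitely often. Moreover, on the stretches it uses, the bound $\Delta q_k\gtrsim\min\{\epsilon\|d_k\|,\epsilon^2\}$ is unjustified exactly in the iterations where $\|c_k\|_1$ is small but $\|P_k^\top g_k\|$ is not.

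The repair uses only ingredients you already have. Set $\epsilon'=\epsilon/(2(1+\kappa_H\kappa_v))$ and split each iteration into two branches. If $\|c_k\|_1\ge\epsilon'$, use the constraint part of $\Delta q_k$. Otherwise your Step 1 inequality gives $\|\widetilde g_k\|>\epsilon-\epsilon'(1+\kappa_H\kappa_v)=\epsilon/2$, so the Cauchy decrease applies. In that branch the normal-step error is absorbed by the constraint part, since $\|v_k\|^2\le\beta_k\kappa_v\|c_k\|_1/\sqrt{\sigma_k}$; this is where your ``main obstacle'' remark genuinely matters. Both branches give a threshold on $\sigma_k$ and $\Delta q_k\gtrsim\min\{\epsilon\|d_k\|,\epsilon^2\}$, and Steps 2--3 then go through. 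Once repaired, your argument is a valid alternative to the paper's. The paper avoids the mixed regime by proving feasibility first (Theorem~\ref{th:constraintsconvergence}), using only the constraint decrease and the $\sigma_k\to\infty$ contradiction of Lemma~\ref{lem:technicallemma} and Corollary~\ref{cor:technicalcorollary}. Only afterwards does it use $c_k\to0$ to pass from $\liminf\|\widetilde g_k\|=0$ to $P_k^\top g_k$ (Theorem~\ref{th:firstorderconvergence1}). You instead treat both measures at once with a bounded-$\sigma$ argument in the style of Lemmas~\ref{lem:fosplemma1}--\ref{lem:fosplemma2}, together with simpler summability from the eventual constancy of $\mu_k$. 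Because you absorb $\|v_k\|^2$ into $\tau\mu_k\beta_k\|c_k\|_1$, you also avoid the restriction $\sqrt{\|c_k\|_1}\le\tilde\delta\epsilon$ that Lemma~\ref{lem:fosplemma1} needs.
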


For the proof of Theorem \ref{th:firstorderconvergence} we will need a series of technical lemmas. We first give a lower bound on the predicted reduction with respect to the tangential step. It represents an improvement of the bound provided in \cite[Lemma 2.1]{cartis2011adaptive}.

\begin{lemma}\label{lem:deltagkbound}
    If $u_k$ satisfies \hyperlink{or1}{(OR1)}, then, for every $k\geq 0$, we have
    \begin{equation}\label{eq:deltagkbound}
        \Delta m_k^U(u_k)\geq\frac{3}{10}\norm{\widetilde{g}_k}\min\left\{\frac{\norm{\widetilde{g}_k}}{1+\norm{H_k}},\sqrt{\frac{\norm{\widetilde{g}_k}}{\sigma_k}} 
  \right\}.
    \end{equation}
\end{lemma}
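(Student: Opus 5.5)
The plan is to reduce everything to the Cauchy step. By \hyperlink{or1}{(OR1)} we have $\Delta m_k^U(u_k)\geq \Delta m_k^U(u_k^c)\geq \Delta m_k^U(-\alpha\widetilde{g}_k)$ for every $\alpha\geq 0$, since $\alpha_k^*$ minimizes $m_k^U(-\alpha\widetilde g_k)$ over $\alpha\ge 0$. So it suffices to exhibit one explicit $\alpha\geq 0$ whose decrease is at least the right-hand side of \eqref{eq:deltagkbound}. If $\widetilde g_k=0$ the claim reads $\Delta m_k^U(u_k)\ge 0$, which follows from $\alpha=0$. Hence we assume $\widetilde g_k\neq 0$.

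First we simplify the model along $-\alpha\widetilde g_k$. Since $\widetilde g_k=P_k^\top(g_k+H_kv_k)$ and $P_k$ is a symmetric idempotent projector, $\widetilde g_k\in\mathcal{R}(P_k)$. Hence $P_k\widetilde g_k=\widetilde g_k$ and $\widetilde g_k^\top\widetilde H_k\widetilde g_k=\widetilde g_k^\top H_k\widetilde g_k\le \norm{H_k}\norm{\widetilde g_k}^2$. This gives
\begin{equation*}
\Delta m_k^U(-\alpha\widetilde g_k)\;\geq\;\alpha\norm{\widetilde g_k}^2\Big(1-\frac{\alpha\norm{H_k}}{2}-\frac{\sigma_k\alpha^2\norm{\widetilde g_k}}{3}\Big).
\end{equation*}

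Next we choose $\alpha=c\min\big\{\frac{1}{1+\norm{H_k}},\frac{1}{\sqrt{\sigma_k\norm{\widetilde g_k}}}\big\}$ for a constant $c\in(0,1)$. With this choice, $\alpha\norm{H_k}\le c$ and $\sigma_k\alpha^2\norm{\widetilde g_k}\le c^2$. Moreover,
\begin{equation*}
\alpha\norm{\widetilde g_k}^2=c\norm{\widetilde g_k}\min\Big\{\frac{\norm{\widetilde g_k}}{1+\norm{H_k}},\sqrt{\frac{\norm{\widetilde g_k}}{\sigma_k}}\Big\}.
\end{equation*}
Combining these, $\Delta m_k^U(u_k)\ge c\,(1-c/2-c^2/3)\,\norm{\widetilde g_k}\min\{\cdot,\cdot\}$. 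Taking $c=1/2$ gives the factor $\tfrac12\cdot\tfrac23=\tfrac13\ge \tfrac{3}{10}$, which proves \eqref{eq:deltagkbound}. The constant could be pushed to about $0.348$ by choosing $c\approx 0.618$.

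The main point needing care is the identity $P_k\widetilde g_k=\widetilde g_k$. It makes the cubic term $\norm{P_k u}^3$ equal to $\alpha^3\norm{\widetilde g_k}^3$ and lets the curvature term be bounded by $\norm{H_k}$ rather than $\norm{\widetilde H_k}$. The rest is an elementary one-dimensional estimate; avoiding a case split by taking the minimum inside $\alpha$ yields the clean constant.
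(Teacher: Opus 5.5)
Your proof is correct and is essentially the paper's argument: you use (OR1) to reduce to the decrease along $-\alpha\widetilde{g}_k$, with $\alpha$ a constant multiple of $\min\{1/(1+\norm{H_k}),1/\sqrt{\sigma_k\norm{\widetilde{g}_k}}\}$. The only difference is how the constant is handled: the paper bounds $y^2/3\le y/3$ and maximizes $y(1-5y/6)$ at $y=3/5$ to get $3/10$, whereas you keep the $c^2/3$ term and take $c=1/2$, which gives the slightly sharper constant $1/3$.
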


\begin{proof}
    For every $\alpha\geq 0$, by the Cauchy-Schwarz inequality and orthogonality of $P_k$ we have that
    \begin{align}\label{eq:lemma1eq1}
        -\Delta m_k^U(u_k)&\leq -\Delta m_k^U(u_k^c)\nonumber\\
        &\leq -\Delta m_k^U(-\alpha \widetilde{g}_k)\nonumber\\
        &\leq -\alpha\norm{\widetilde{g}_k}^2+\frac{1}{2}\alpha^2\norm{\widetilde{g_k}}^2\norm{\widetilde{H}_k}+\alpha^3\frac{\sigma_k}{3}\norm{\widetilde{g}_k}^3\nonumber\\
        &\leq \alpha\norm{\widetilde{g}_k}^2\left( -1+\frac{1}{2}\alpha\norm{H_k}+\alpha^2\frac{\sigma_k}{3}\norm{\widetilde{g}_k} \right).
    \end{align}

We may assume without loss of generality that $\norm{\widetilde{g}_k}\neq 0$ (otherwise (\ref{eq:deltagkbound}) holds trivially). For $y\in(0,1]$ we define
\begin{equation}
    \alpha_k(y):=y \min\left\{ \frac{1}{1+\norm{H_k}},\frac{1}{\sqrt{\sigma_k \norm{\widetilde{g}_k}}}  \right\}.
\end{equation}
By plugging $\alpha_k(y)$ into (\ref{eq:lemma1eq1}) we deduce that 
\begin{equation*}
    \Delta m_k^U(u_k)\geq\alpha_k(y)\norm{\widetilde{g}_k}^2\left(1-\frac{5y}{6}  \right)=y\left(1-\frac{5y}{6}\right)\norm{\widetilde{g}_k}\min\left\{\frac{\norm{\widetilde{g}_k}}{1+\norm{H_k}},\sqrt{\frac{\norm{\widetilde{g}_k}}{\sigma_k}}\right\}.
\end{equation*}
The right hand side is maximized for $y=3/5$, and the result directly follows.
\end{proof}

\begin{remark}\label{rem:remark1}
It should be mentioned that, while we require a minimum reduction attained by the Cauchy point, one may simply compute the step $\widetilde{u}_k:=-\widetilde{\alpha}_k\widetilde{g}_k$ at each iteration, where $\widetilde{\alpha}_k:=\alpha_k(3/5)$ as above. This computational choice for the tangential step not only guarantees a positive predicted reduction of $m_k^U$, but also a positive reduction of the entire model: If $\norm{c_k}>0$, then $\Delta q_k>0$ by Lemma \ref{lem:reduction}, whereas if $\norm{c_k}=0$ and $\norm{\widetilde{g}_k}>0$, then $\Delta q_k>0$ by Lemma \ref{lem:deltagkbound}. Note that if $\norm{c_k}=\norm{\widetilde{g}_k}=0$, then $\norm{\nabla_x\mathcal{L}(x_k,\lambda_k)}=0$ by Lemma \ref{lem:dualvariables}, thus we are at a first-order stationary point.
\end{remark}

Next, we provide some useful upper bounds on the normal and tangential steps.

\begin{lemma}\label{lem:tangentialstepbound}
    If \hyperlink{f3}{(F3)} holds, then, for every $k\geq 0$,
    \begin{equation*}
        \norm{u_k}\leq 3\max\left\{ \frac{\kappa_H}{\sigma_k},\sqrt{\frac{\norm{\widetilde{g}_k}}{\sigma_k}}  \right\}.
    \end{equation*}
\end{lemma}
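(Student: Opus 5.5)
The plan is to use only the fact that the Oracle's output does not increase the reduced model, and then compare the linear and quadratic terms of $m_k^U$ with the cubic term. By \hyperlink{or1}{(OR1)}, $\Delta m_k^U(u_k)\geq \Delta m_k^U(u_k^c)$. Since $\alpha=0$ is feasible in the definition of $\alpha_k^*$, we have $m_k^U(u_k^c)\leq m_k^U(0)$, so $\Delta m_k^U(u_k^c)\geq 0$. Hence $\Delta m_k^U(u_k)\geq 0$. If $u_k=0$ there is nothing to prove, so assume $u_k\neq 0$.

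Because $u_k\in\mathcal{R}(P_k)$ and $P_k$ is an orthogonal projection, $P_k u_k=u_k$. The cubic term is therefore $\frac{\sigma_k}{3}\norm{u_k}^3$. Also $\norm{\widetilde H_k}=\norm{P_k^\top H_kP_k}\leq \norm{H_k}\leq\kappa_H$ by \hyperlink{f3}{(F3)}. Expanding $\Delta m_k^U(u_k)\geq 0$ and applying Cauchy--Schwarz gives
\begin{equation*}
0\leq -\widetilde g_k^\top u_k-\tfrac12 u_k^\top\widetilde H_k u_k-\tfrac{\sigma_k}{3}\norm{u_k}^3\leq \norm{\widetilde g_k}\norm{u_k}+\tfrac12\kappa_H\norm{u_k}^2-\tfrac{\sigma_k}{3}\norm{u_k}^3 .
\end{equation*}
Dividing by $\norm{u_k}>0$ yields
\begin{equation*}
\tfrac{\sigma_k}{3}\norm{u_k}^2\leq \norm{\widetilde g_k}+\tfrac12\kappa_H\norm{u_k}.
\end{equation*}

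Next, split into two cases according to which term on the right dominates.
\begin{itemize}
\item If $\norm{\widetilde g_k}\geq \frac12\kappa_H\norm{u_k}$, then $\frac{\sigma_k}{3}\norm{u_k}^2\leq 2\norm{\widetilde g_k}$. So $\norm{u_k}\leq\sqrt{6\norm{\widetilde g_k}/\sigma_k}\leq 3\sqrt{\norm{\widetilde g_k}/\sigma_k}$.
\item Otherwise, $\frac{\sigma_k}{3}\norm{u_k}^2\leq \kappa_H\norm{u_k}$, which gives $\norm{u_k}\leq 3\kappa_H/\sigma_k$.
\end{itemize}
In either case $\norm{u_k}\leq 3\max\{\kappa_H/\sigma_k,\sqrt{\norm{\widetilde g_k}/\sigma_k}\}$, which is the claim.

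There is no real obstacle here. The only points needing care are two small justifications. First, nonnegativity of the model decrease must come from \hyperlink{or1}{(OR1)}, since the Oracle need not return an exact minimizer. Second, $\norm{P_ku_k}=\norm{u_k}$ and $\norm{\widetilde H_k}\leq\kappa_H$ both follow from $P_k$ being an orthogonal projection.
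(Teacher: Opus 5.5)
Your argument is correct and takes essentially the paper's route: the paper cites Lemma~\ref{lem:deltagkbound} (for $\Delta m_k^U(u_k)\geq 0$) together with a minor modification of \cite[Lemma 2.2]{cartis2011adaptive}, which is the same Cauchy--Schwarz comparison of the linear and quadratic terms against the cubic term that you carry out. The differences are cosmetic: you get nonnegativity directly from (OR1) with $\alpha=0$ and finish with a case split rather than splitting $\frac{\sigma_k}{3}\norm{u_k}^3$ into two pieces, and you rightly note that $u_k\in\mathcal{R}(P_k)$ is what makes the cubic term equal to $\frac{\sigma_k}{3}\norm{u_k}^3$.
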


\begin{proof}
    The bound follows from Lemma \ref{lem:deltagkbound} and minor modifications of \cite[Lemma 2.2]{cartis2011adaptive}.
\end{proof}

\begin{lemma}\label{lem:normalstepbound}
    If  \hyperlink{f1}{(F1)}-\hyperlink{f2}{(F2)} hold, then for every $k\geq 0$ there exist positive constants $\kappa_v,\;\kappa_{vs}>0$ such that $\norm{v_k^c}\leq \kappa_v\norm{c_k}_1$, $\norm{v_k^c}^2\leq \kappa_{vs}\norm{c_k}_1$. The same upper bounds hold for $\norm{v_k}$ and $\norm{v_k}^2$, respectively.
\end{lemma}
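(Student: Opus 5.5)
Since $v_k^c\in\mathcal{R}(A_k^\top)$, I would write $v_k^c=A_k^\top y$ for some $y\in\mathbb{R}^m$ and use \hyperlink{f2}{(F2)} to bound $\norm{v_k^c}$ by $\norm{A_k v_k^c}$. The singular values of $A_k^\top$ restricted to $\mathcal{R}(A_k^\top)$ are the nonzero singular values of $A_k$, all at least $\gamma_A$. Hence $\norm{A_k v_k^c}=\norm{A_kA_k^\top y}\geq\gamma_A\norm{A_k^\top y}=\gamma_A\norm{v_k^c}$, i.e.
\[
\norm{v_k^c}\leq \gamma_A^{-1}\norm{A_k v_k^c}.
\]

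Next I would control $\norm{A_k v_k^c}$ by $\norm{c_k}_1$ using \eqref{eq:normalstep3} in its weaker form $\norm{A_kv_k^c+c_k}_1\leq r_v\norm{c_k}_1$. By the triangle inequality and $\norm{\cdot}\leq\norm{\cdot}_1$,
\[
\norm{A_kv_k^c}\leq\norm{A_kv_k^c+c_k}_1+\norm{c_k}_1\leq(1+r_v)\norm{c_k}_1\leq 2\norm{c_k}_1 .
\]
Setting $\kappa_v:=2/\gamma_A$ gives the first bound, and it also covers the trivial case $c_k=0$, where $v_k^c=0$.

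For the squared bound I would use boundedness of $c$ from \hyperlink{f1}{(F1)}: $\norm{c_k}_1\leq\sum_i\kappa_{c_i}=\kappa_c$. Then
\[
\norm{v_k^c}^2\leq\kappa_v^2\norm{c_k}_1^2\leq\kappa_v^2\kappa_c\norm{c_k}_1,
\]
so $\kappa_{vs}:=\kappa_v^2\kappa_c$ works. Finally, $\beta_k\in(0,1]$ by \eqref{eq:beta}, so $\norm{v_k}=\beta_k\norm{v_k^c}\leq\norm{v_k^c}$, and the same bounds transfer to $\norm{v_k}$ and $\norm{v_k}^2$.

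The only real subtlety is the first step: justifying $\norm{A_kv}\geq\gamma_A\norm{v}$ for $v\in\mathcal{R}(A_k^\top)$. This follows from the SVD of $A_k$, or equivalently because $A_k$ restricted to $\mathcal{N}(A_k)^\perp$ has smallest singular value $\sigma_{\min}(A_k)$. Everything else is routine bookkeeping of constants.
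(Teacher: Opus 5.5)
Your proposal is correct and follows essentially the same argument as the paper: bound $\norm{v_k^c}$ by $\norm{A_kv_k^c}/\gamma_A$, apply the triangle inequality with \eqref{eq:normalstep3} to get $(1+r_v)\norm{c_k}_1$, square using boundedness of $c$ from (F1), and transfer the bounds to $v_k$ via $\beta_k\le 1$. The only differences are cosmetic: the paper keeps $\kappa_v=(1+r_v)/\gamma_A$ where you use $2/\gamma_A$, and you explicitly justify the inequality $\norm{A_kv}\ge\gamma_A\norm{v}$ for $v\in\mathcal{R}(A_k^\top)$, which the paper uses without comment.
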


\begin{proof}
  Due to assumptions \hyperlink{f1}{(F1)} and \hyperlink{f2}{(F2)}, for the first inequality we have
\begin{equation*}
    \norm{v_k^c}\leq \frac{\norm{A_k v_k^c}}{\sigma_{\min}(A_k)}\leq \frac{\norm{A_k v_k^c+c_k}}{\sigma_{\min}(A_k)}+\frac{\norm{c_k}}{\sigma_{\min}(A_k)}\leq \frac{r_v+1}{\sigma_{\min}(A_k)}\norm{c_k}_1\leq \frac{r_v+1}{\gamma_A}\norm{c_k}_1.
\end{equation*}
For the second inequality, we get
\begin{equation*}
    \norm{v_k^c}^2\leq \left(\frac{r_v+1}{\gamma_A}\right)^2\norm{c_k}^2\leq \left(\frac{r_v+1}{\gamma_A}\right)^2\kappa_c\norm{c_k}_1.
\end{equation*}
It is easy to see that the same bounds hold for $\norm{v_k}$, $\norm{v_k}^2$, since $\beta_k\in(0,1]$.
\end{proof}

Using the previous Lemmas we obtain the following bound on the trial step:

\begin{lemma}\label{lem:fullstepbound}
    If \hyperlink{f1}{(F1)}-\hyperlink{f2}{(F2)} hold, then $\displaystyle\norm{d_k}\leq\frac{\kappa_d}{\sqrt{\sigma_k}}$ for some $\kappa_d>0$.  
\end{lemma}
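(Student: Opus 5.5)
We bound $\norm{v_k}$ and $\norm{u_k}$ separately by multiples of $1/\sqrt{\sigma_k}$ and then apply the triangle inequality to $d_k=v_k+u_k$.

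\emph{Normal step.} The choice \eqref{eq:beta} gives $\beta_k\leq \frac{1}{\norm{v_k^c}\sqrt{\sigma_k}}$. Hence $\norm{v_k}=\beta_k\norm{v_k^c}\leq 1/\sqrt{\sigma_k}$ whenever $c_k\neq 0$. If $c_k=0$, then $v_k=0$ and the bound is trivial.

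\emph{Tangential step.} Lemma \ref{lem:tangentialstepbound} gives
\[
\norm{u_k}\leq 3\max\left\{\frac{\kappa_H}{\sigma_k},\sqrt{\frac{\norm{\widetilde g_k}}{\sigma_k}}\right\}.
\]
For the first term, $\sigma_k\geq\sigma_{\min}$ for all $k$, since the update never lets $\sigma$ fall below $\sigma_{\min}$ and $\sigma_0\geq\sigma_{\min}$. Therefore
\[
\frac{\kappa_H}{\sigma_k}=\frac{\kappa_H}{\sqrt{\sigma_k}\sqrt{\sigma_k}}\leq\frac{\kappa_H/\sqrt{\sigma_{\min}}}{\sqrt{\sigma_k}}.
\]
For the second term we need a uniform bound on $\norm{\widetilde g_k}$. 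Since $P_k$ is an orthogonal projection, $\norm{P_k}\leq 1$, so
\[
\norm{\widetilde g_k}\leq\norm{g_k}+\norm{H_k}\norm{v_k}\leq \kappa_g+\kappa_H\norm{v_k}.
\]
We bound $\norm{v_k}$ in either of two ways: by $1/\sqrt{\sigma_{\min}}$ from the previous step, or by $\kappa_v\kappa_c$ using Lemma \ref{lem:normalstepbound} together with $\norm{c_k}_1\leq\kappa_c$ from \hyperlink{f1}{(F1)}. Either way $\norm{\widetilde g_k}\leq \kappa_{\tilde g}$ for a constant $\kappa_{\tilde g}$, and so $\sqrt{\norm{\widetilde g_k}/\sigma_k}\leq \sqrt{\kappa_{\tilde g}}/\sqrt{\sigma_k}$.

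\emph{Combining.} Putting the two bounds together gives $\norm{d_k}\leq\kappa_d/\sqrt{\sigma_k}$ with
\[
\kappa_d:=1+3\max\left\{\frac{\kappa_H}{\sqrt{\sigma_{\min}}},\sqrt{\kappa_{\tilde g}}\right\}.
\]
This argument uses \hyperlink{f3}{(F3)} in addition to \hyperlink{f1}{(F1)}--\hyperlink{f2}{(F2)}. That assumption is implicit here because Lemma \ref{lem:tangentialstepbound} requires it. If \hyperlink{f3}{(F3)} is not to be assumed, $\norm{H_k}$ could instead be bounded through \hyperlink{f1}{(F1)} and a bound on $\lambda_k$, but as stated it is simplest to invoke \hyperlink{f3}{(F3)}.

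The only mildly delicate point is the first term of Lemma \ref{lem:tangentialstepbound}, which scales like $1/\sigma_k$ rather than $1/\sqrt{\sigma_k}$. It is converted to the right scaling using the lower bound $\sigma_k\geq\sigma_{\min}$. Everything else is routine.
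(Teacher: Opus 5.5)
Your proposal is correct and follows the paper's proof: you bound $\norm{v_k}\leq 1/\sqrt{\sigma_k}$ via \eqref{eq:beta}, bound $\norm{u_k}$ via Lemma \ref{lem:tangentialstepbound} using $\sigma_k\geq\sigma_{\min}$ and $\norm{\widetilde g_k}\leq\kappa_g+\kappa_H\kappa_v\kappa_c$, and combine these with the triangle inequality to obtain the same $\kappa_d$. Your remark that (F3) is implicitly needed through Lemma \ref{lem:tangentialstepbound} is accurate, since the lemma's stated hypotheses omit it.
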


\begin{proof}
    From \eqref{eq:beta} we find $\norm{v_k}\leq\frac{1}{\sqrt{\sigma_k}}$. From the triangle inequality
    and Lemma \ref{lem:tangentialstepbound} we get
    \begin{equation*}
        \norm{d_k}\leq \norm{v_k}+\norm{u_k}\leq \left(1+3\max\left\{ \frac{\kappa_H}{\sqrt{\sigma_{\min}}},\sqrt{\kappa_{\widetilde{g}}} \right\}\right)\frac{1}{\sqrt{\sigma_k}},
    \end{equation*}
where $\kappa_{\widetilde{g}}=\kappa_g+\kappa_H\kappa_v\kappa_c$ by definition of $\widetilde{g}_k$, assumptions \hyperlink{f1}{(F1)}-\hyperlink{f2}{(F2)}, and Lemma \ref{lem:normalstepbound}.
\end{proof}

By the known relation between trust region methods and adaptive cubic methods that was previously mentioned, Lemma \ref{lem:fullstepbound} essentially implies that the trial step is always well-controlled and stays within a feasible region induced by the regularization parameter $\sigma_k$.

We now show that the sequence of merit penalty parameters cannot ``blow up".

\begin{lemma}\label{lem:meritparameterbound}
    If \hyperlink{f1}{(F1)}-\hyperlink{f3}{(F3)} hold, then there exists $\mu_{\max}>0$ such that $\mu_k\leq\mu_{\max}$ for every $k\geq 0$.
\end{lemma}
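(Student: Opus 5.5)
The plan is to bound $\mu_k^c$ uniformly in $k$ whenever $\norm{c_k}_1\neq 0$. The update \eqref{eq:merit1} only increases $\mu_k$, and each increase sets it to $\nu\mu_k^c$. So a uniform bound $\mu_k^c\leq \bar\mu$ immediately gives $\mu_k\leq \mu_{\max}:=\max\{\mu_{-1},\nu\bar\mu\}$ for all $k$. The task therefore reduces to bounding the numerator of \eqref{eq:merit2} by a constant multiple of $\beta_k\norm{c_k}_1$. Since $v_k=\beta_k v_k^c$, every term should carry at least one factor of $\beta_k$ together with a factor controlled by $\norm{c_k}_1$.

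The first two terms are routine. By \hyperlink{f1}{(F1)} and Lemma \ref{lem:normalstepbound}, $g_k^\top v_k\leq \kappa_g\beta_k\norm{v_k^c}\leq \kappa_g\kappa_v\beta_k\norm{c_k}_1$. By \hyperlink{f3}{(F3)}, using $\beta_k^2\leq\beta_k$ and $\norm{v_k^c}^2\leq\kappa_{vs}\norm{c_k}_1$ from Lemma \ref{lem:normalstepbound}, we get $\frac12 v_k^\top H_k v_k\leq \frac12\kappa_H\kappa_{vs}\beta_k\norm{c_k}_1$.

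The cubic term is the main obstacle. I will use the mean value bound $a^3-b^3\leq 3\max\{a,b\}^2|a-b|$ with $a=\norm{d_k}$ and $b=\norm{u_k}$. Since $|\norm{d_k}-\norm{u_k}|\leq \norm{v_k}$ and $\max\{\norm{d_k},\norm{u_k}\}\leq \norm{v_k}+\norm{u_k}$, this gives
\[
\frac{\sigma_k}{3}\left(\norm{d_k}^3-\norm{u_k}^3\right)\leq \sigma_k\norm{v_k}\left(\norm{v_k}+\norm{u_k}\right)^2 .
\]
Orthogonality of $v_k\in\mathcal{R}(A_k^\top)$ and $u_k\in\mathcal{N}(A_k)$ would give a slightly sharper bound, but it is not needed. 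Lemma \ref{lem:fullstepbound}, together with the analogous bound $\norm{u_k}\leq \kappa_d/\sqrt{\sigma_k}$ from the proof of Lemma \ref{lem:tangentialstepbound}, yields $(\norm{v_k}+\norm{u_k})^2\leq C/\sigma_k$ for a constant $C$. Here the $\sigma_k$ must be absorbed exactly, because $\sigma_k$ may be unbounded. The cubic term is then at most $C\norm{v_k}=C\beta_k\norm{v_k^c}\leq C\kappa_v\beta_k\norm{c_k}_1$.

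The point is that the factor $\sigma_k$ cancels against the $1/\sigma_k$ coming from the squared step length, so that only one factor $\norm{v_k}$ remains, and that factor contains $\beta_k$. To make the cancellation clean I will check the constant in Lemma \ref{lem:tangentialstepbound}: $\norm{u_k}\leq 3\max\{\kappa_H/\sigma_k,\sqrt{\norm{\widetilde g_k}/\sigma_k}\}\leq 3\max\{\kappa_H/\sqrt{\sigma_{\min}},\sqrt{\kappa_{\widetilde g}}\}/\sqrt{\sigma_k}$, using $\sigma_k\geq\sigma_{\min}$. This is exactly the bound used in Lemma \ref{lem:fullstepbound}.

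Summing the three bounds and dividing by $(1-r_v-\tau)\beta_k\norm{c_k}_1$, which is positive since $r_v<1-\tau$, gives
\[
\mu_k^c\leq \bar\mu:=\frac{\kappa_g\kappa_v+\tfrac12\kappa_H\kappa_{vs}+C\kappa_v}{1-r_v-\tau},
\]
which is independent of $k$. This completes the plan.
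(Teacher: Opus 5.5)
Your proof is correct and follows the paper's argument. Like the paper, you bound each term of the numerator of $\mu_k^c$ by a constant times $\beta_k\norm{c_k}_1$ using Lemmas \ref{lem:normalstepbound} and \ref{lem:tangentialstepbound}, with $\sigma_k$ absorbed because $\norm{v_k}$ and $\norm{u_k}$ are $\mathcal{O}(\sigma_k^{-1/2})$. The differences are cosmetic: you bound the cubic difference with one mean-value inequality instead of the paper's three-term binomial expansion, and your final bound $\max\{\mu_{-1},\nu\bar\mu\}$ states explicitly the factor $\nu$ and the initial value that the paper's closing sentence leaves implicit.
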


\begin{proof}
    Recall by the update of the penalty parameter that $\mu_k>\mu_{k-1}$ if and only if $\norm{c_k}_1\neq 0$ and $\mu_{k-1}<\mu_k^c$. Observe that, from the Cauchy-Schwartz and triangle inequalities we get
\begin{align*}
    \mu_k^c &= \frac{g_k^\top v_k+\frac{1}{2}v_k^\top H_k v_k+\frac{\sigma_k}{3}\left[ \norm{d_k}^3-\norm{u_k}^3 \right]}{(1-\tau-r_v)\beta_k \norm{c_k}_1} \\
    & \leq
    \underbrace{\frac{\norm{g_k}\norm{v_k}}{(1-\tau-r_v)\beta_k \norm{c_k}_1}}_{\mathrm{I}}
    +
    \underbrace{\frac{\norm{H_k}\norm{v_k}^2}{2(1-\tau-r_v)\beta_k \norm{c_k}_1}}_{\mathrm{II}}
    +
    \underbrace{\frac{\sigma_k\norm{u_k}^2\norm{v_k}}{3(1-\tau-r_v)\beta_k \norm{c_k}_1}}_{\mathrm{III}} \\
    &+
    \underbrace{\frac{\sigma_k\norm{u_k}\norm{v_k}^2}{3(1-\tau-r_v)\beta_k \norm{c_k}_1}}_{\mathrm{IV}}
    +
    \underbrace{\frac{\sigma_k\norm{v_k}^3}{3(1-\tau-r_v)\beta_k \norm{c_k}_1}}_{\mathrm{V}}.
\end{align*}

We now bound each term individually, using Lemmas \ref{lem:tangentialstepbound} and \ref{lem:normalstepbound}:
\begin{itemize}
    \item[] $\mathrm{I}=\displaystyle\frac{\norm{g_k}\norm{v_k}}{(1-\tau-r_v)\beta_k \norm{c_k}_1}\leq \frac{\kappa_g \kappa_v}{1-\tau-r_v}$.
    \item[] $\mathrm{II}=\displaystyle\frac{\norm{H_k}\norm{v_k}^2}{2(1-\tau-r_v)\beta_k \norm{c_k}_1}\leq \frac{\kappa_H\kappa_{vs}}{2(1-\tau-r_v)}$.
    \item[] $\mathrm{III}=\displaystyle\frac{\sigma_k\norm{u_k}^2\norm{v_k}}{3(1-\tau-r_v)\beta_k \norm{c_k}_1}\leq \frac{3\sigma_k\kappa_v}{(1-\tau-r_v)}\max\left\{ \frac{\kappa_H^2}{\sigma_k^2},\frac{\kappa_{\widetilde{g}}}{\sigma_k} \right\}\leq\frac{3\kappa_v}{(1-\tau-r_v)}\max\left\{ \frac{\kappa_H^2}{\sigma_{\min}},\kappa_{\widetilde{g}}
 \right\}$.
 \item[] $\mathrm{IV}=\displaystyle\frac{\sigma_k\norm{u_k}\norm{v_k}^2}{3(1-\tau-r_v)\beta_k \norm{c_k}_1}\leq \frac{\kappa_v\sqrt{\sigma_k}\norm{u_k}}{3(1-\tau-r_v)}\leq \frac{\kappa_v}{(1-\tau-r_v)}\max\left\{ \frac{\kappa_H}{\sqrt{\sigma_{\min}}},\sqrt{\kappa_{\widetilde{g}}} \right\}$.
 \item[] $\mathrm{V}=\displaystyle\frac{\sigma_k\norm{v_k}^3}{3(1-\tau)\beta_k \norm{c_k}_1}\leq \frac{\sigma_k \kappa_v \norm{v_k}^2}{3(1-\tau-r_v)}\leq \frac{\kappa_{v}}{3(1-\tau-r_v)}$.
\end{itemize}
Set
\begin{equation}
    \mu_{\max}:=\displaystyle\frac{3\kappa_g\kappa_v+\frac{3}{2}\kappa_H\kappa_{vs}+9\kappa_v\max\left\{ \frac{\kappa_H^2}{\sigma_{\min}},\kappa_{\widetilde{g}}
 \right\}+3\kappa_v\max\left\{ \frac{\kappa_H}{\sqrt{\sigma_{\min}}},\sqrt{\kappa_{\widetilde{g}}} \right\}+\kappa_{v}}{3(1-\tau-r_v)}.
\end{equation}
If $\mu_k>\mu_{k-1}$, then we must have $\mu_{k-1}<\mu_{\max}$. Hence,
if this inequality is not satisfied at some iteration $k_0$, then it remains unsatisfied for every $k\geq k_0$. The result follows by the construction of the SCP algorithm, since the merit parameter is increased by a factor of $\nu>1$.
\end{proof}

Next, a bound on the difference of the predicted and actual reduction of the model, is given, when second-order correction steps are not considered.

\begin{lemma}\label{lem:ratiobound}
    Suppose that \hyperlink{f1}{(F1)}-\hyperlink{f3}{(F3)} hold. Then $\Delta q_k-\left(\varphi(x_k,\mu_k)-\varphi(x_k+d_k,\mu_k)\right)\leq \kappa_0\norm{d_k}^2$ for every $k\geq 0$, where $\kappa_0:=\frac{1}{2}\left(L_g+\kappa_H+\mu_{\max}L_A\right)$.
\end{lemma}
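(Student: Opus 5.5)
We expand the difference $\Delta q_k-\left(\varphi(x_k,\mu_k)-\varphi(x_k+d_k,\mu_k)\right)$ directly from the definitions. Since $q_k(0)=\varphi(x_k,\mu_k)=f_k+\mu_k\norm{c_k}_1$, the quantity equals
\begin{equation*}
\varphi(x_k+d_k,\mu_k)-q_k(d_k)=\Big[f(x_k+d_k)-f_k-g_k^\top d_k-\tfrac12 d_k^\top H_k d_k\Big]-\frac{\sigma_k}{3}\norm{d_k}^3+\mu_k\Big[\norm{c(x_k+d_k)}_1-\norm{c_k+A_kd_k}_1\Big].
\end{equation*}
The cubic term is nonpositive and can be discarded, so it remains to bound the objective bracket and the constraint bracket separately, each by a multiple of $\norm{d_k}^2$.

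For the objective bracket, we use the mean value theorem in integral form along the segment $[x_k,x_k+d_k]$. This segment lies in $\mathcal{X}$ because $\mathcal{X}$ is convex. Since $g$ is $L_g$-Lipschitz by \hyperlink{f1}{(F1)}, we get $f(x_k+d_k)-f_k-g_k^\top d_k\leq \frac{L_g}{2}\norm{d_k}^2$. Since $\norm{H_k}\leq\kappa_H$ by \hyperlink{f3}{(F3)}, we get $-\frac12 d_k^\top H_kd_k\leq \frac{\kappa_H}{2}\norm{d_k}^2$. Together these give the contribution $\frac12(L_g+\kappa_H)\norm{d_k}^2$.

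For the constraint bracket, the reverse triangle inequality gives
\begin{equation*}
\norm{c(x_k+d_k)}_1-\norm{c_k+A_kd_k}_1\leq\norm{c(x_k+d_k)-c_k-A_kd_k}_1=\sum_{i=1}^m\left|c_i(x_k+d_k)-c_i(x_k)-\nabla c_i(x_k)^\top d_k\right|.
\end{equation*}
Each summand is at most $\frac{L_{\nabla c_i}}{2}\norm{d_k}^2$, by the Lipschitz continuity of $\nabla c_i$ on $\mathcal{X}$. Summing over $i$ and using the definition $L_A=\sum_i L_{\nabla c_i}$ gives the bound $\frac{L_A}{2}\norm{d_k}^2$. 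We then multiply by $\mu_k\leq\mu_{\max}$ from Lemma \ref{lem:meritparameterbound}, which gives the contribution $\frac{\mu_{\max}L_A}{2}\norm{d_k}^2$.

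Adding the two contributions gives the claimed $\kappa_0=\frac12(L_g+\kappa_H+\mu_{\max}L_A)$. There is no serious obstacle. The only points needing care are that the line segment stays in $\mathcal{X}$, which holds because $\mathcal{X}$ is an open convex set containing the iterates and is implicitly assumed to contain the trial points, and that the $\ell_1$ norm is handled componentwise so that the constant is exactly $L_A$ without norm-equivalence factors.
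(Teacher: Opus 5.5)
Your proof is correct and matches the paper's argument essentially step for step. You rewrite the difference as $\varphi(x_k+d_k,\mu_k)-q_k(d_k)$, drop the cubic term, and bound the three remaining pieces by $\frac{L_g}{2}\|d_k\|^2$, $\frac{\kappa_H}{2}\|d_k\|^2$ and $\frac{\mu_{\max}L_A}{2}\|d_k\|^2$; the last bound uses the same componentwise Taylor estimate that the paper packages as Lemma~\ref{lem:lipcon}, combined with Lemma~\ref{lem:meritparameterbound}.
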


\begin{proof}
First, note that $\Delta q_k=q_k(0)-q_k(d_k)=\varphi(x_k,\mu_k)-q_k(d_k)$. Therefore, the difference of the predicted and actual reduction is equal to $\varphi(x_k+d_k,\mu_k)-q_k(d_k)$. By expanding this difference and using the triangle inequality we find
\begin{align}\label{eq:aredpred}
    \varphi(x_k+d_k,\mu_k)-q_k(d_k)&\leq \left| f_k-f(x_k+d_k)+g_k^\top d_k \right| + \left| \frac{1}{2}d_k^\top H_k d_k \right| + \nonumber \\
    &+\mu_k(\norm{c(x_k+d_k)}_1-\norm{c_k+A_k d_k}_1).
\end{align}

By Lipschitz continuity of $g$, the first absolute value of (\ref{eq:aredpred}) is bounded by $\frac{L_g}{2}\norm{d_k}^2$. The second absolute value of the same inequality is bounded by $\frac{\kappa_H}{2}\norm{d_k}^2$ due to \hyperlink{f3}{(F3)}. Finally, by Lemmas \ref{lem:lipcon} and \ref{lem:meritparameterbound}, it follows that the last term of \eqref{eq:aredpred} is bounded by $\frac{\mu_{\max} L_A}{2} \norm{d_k}^2$. 
\end{proof}

The following technical lemmas will be crucial for the proof of our global convergence results.

\begin{lemma}\label{lem:technicallemma}
    Suppose that \hyperlink{f1}{(F1)}-\hyperlink{f3}{(F3)} hold, and let $\mathcal{I}$ be some infinite index sets of iterates. If $\sigma_k\to\infty$ as $k\to\infty$, $k\in \mathcal{I}$, and $\Delta q_k\geq\min\left\{ \delta_1,\frac{\delta_2}{\sqrt{\sigma_k}} \right\}$ for every $k\in \mathcal{I}$, for some $\delta_1,\delta_2>0$ independent of $k$, then $\rho_k>\eta_2$ for every $k\in\mathcal{I}$ sufficient large. Moreover, not all $k$ sufficiently large belong to the index set $\mathcal{I}$.
\end{lemma}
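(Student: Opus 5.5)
The first claim follows by combining Lemma \ref{lem:ratiobound} with Lemma \ref{lem:fullstepbound}, exactly as in the standard ARC argument. Lemma \ref{lem:ratiobound} gives
\begin{equation*}
    1-\rho_k=\frac{\Delta q_k-\left(\varphi(x_k,\mu_k)-\varphi(x_k+d_k,\mu_k)\right)}{\Delta q_k}\leq \frac{\kappa_0\norm{d_k}^2}{\Delta q_k},
\end{equation*}
and Lemma \ref{lem:fullstepbound} gives $\norm{d_k}^2\leq \kappa_d^2/\sigma_k$. Hence, for every $k\in\mathcal{I}$,
\begin{equation*}
    1-\rho_k\leq \frac{\kappa_0\kappa_d^2}{\sigma_k\min\left\{\delta_1,\delta_2/\sqrt{\sigma_k}\right\}}.
\end{equation*}
Since $\sigma_k\to\infty$ along $\mathcal{I}$, the minimum eventually equals $\delta_2/\sqrt{\sigma_k}$, once $\sqrt{\sigma_k}\geq \delta_2/\delta_1$. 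The right-hand side then becomes $\kappa_0\kappa_d^2/(\delta_2\sqrt{\sigma_k})\to 0$. So for $k\in\mathcal{I}$ large enough, $1-\rho_k<1-\eta_2$, i.e. $\rho_k>\eta_2$. The hypothesis ensures $\Delta q_k>0$, so dividing by it is legitimate. The constant $\mu_{\max}$ inside $\kappa_0$ comes from Lemma \ref{lem:meritparameterbound}, which needs \hyperlink{f1}{(F1)}-\hyperlink{f3}{(F3)}.

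For the second claim, I argue by contradiction. Suppose every sufficiently large $k$ lies in $\mathcal{I}$, say all $k\geq k_0$. By the first part, every iteration $k\geq k_1$ (for some $k_1\geq k_0$) has $\rho_k>\eta_2$, so it is very successful and no correction step is needed. The update rule of Algorithm \ref{alg:scp} then gives $\sigma_{k+1}\leq\sigma_k$ for all $k\geq k_1$. Thus $\sigma_k\leq \sigma_{k_1}<\infty$ for all $k\geq k_1$, contradicting $\sigma_k\to\infty$ along $\mathcal{I}$, since $\mathcal{I}$ now contains all large $k$.

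The main point of care is the first step. I must make sure that the bound of Lemma \ref{lem:ratiobound} is the relevant one, namely that the ratio $\rho_k$ is computed for the uncorrected step $d_k$. This holds because once $\rho_k\geq\eta_1$ the algorithm never enters the correction branch. I must also check that the switch between the two branches of the minimum in the lower bound on $\Delta q_k$ happens only finitely often along $\mathcal{I}$, which follows directly from $\sigma_k\to\infty$. Everything else is routine.
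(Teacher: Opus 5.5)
Your proof is correct and follows the same route as the paper. You combine Lemma~\ref{lem:ratiobound} with Lemma~\ref{lem:fullstepbound} to bound $1-\rho_k$ by $\kappa_0\kappa_d^2/(\sigma_k\Delta q_k)$, use the lower bound on $\Delta q_k$ together with $\sigma_k\to\infty$ along $\mathcal{I}$ (your branch-switching in the minimum is equivalent to the paper's bound $\kappa_\varphi\max\{1/(\delta_1\sigma_k),1/(\delta_2\sqrt{\sigma_k})\}$), and get the second claim from $\sigma_{k+1}\leq\sigma_k$ on very successful iterations.
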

\begin{proof}

 From Lemmas \ref{lem:fullstepbound} and \ref{lem:ratiobound} we acquire
\begin{equation}\label{eq:important}
    \Delta q_k-\varphi(x_k,\mu_k)+\varphi(x_k+d_k,\mu_k)\leq \kappa_\varphi \frac{1}{\sigma_k}\quad\forall k\in\mathcal{I},
\end{equation}
for $\kappa_\varphi:=\kappa_0\kappa_d^2$. By assumption, it follows that 
\begin{equation}
   1-\rho_k= 1-\frac{\varphi(x_k,\mu_k)-\varphi(x_k+d_k,\mu_k)}{\Delta q_k}\leq \kappa_\varphi\max\left\{ \frac{1}{\delta_1 \sigma_k},\frac{1}{\delta_2\sqrt{\sigma_k}} \right\}\;\;\forall k\in\mathcal{I}.
\end{equation}
Since $\sigma_k\to\infty$ as $k\to\infty$, $k\in\mathcal{I}$, we obtain  $\max\left\{ \frac{1}{\delta_1 \sigma_k},\frac{1}{\delta_2\sqrt{\sigma_k}} \right\}< 1-\eta_2$ for every $k\in\mathcal{I}$ sufficiently large, i.e., all iterates in $\mathcal{I}$ are eventually very successful. The last fact follows directly from the construction of the algorithm, as $\sigma_{k+1}\leq \sigma_k$ under a very successful iteration $k$.
\end{proof}

\begin{lemma}\label{lem:technicallemma1}
    Suppose that \hyperlink{f1}{(F1)}-\hyperlink{f4}{(F4)} hold. Consider an infinite set of successful iterates $\mathcal{I}\subseteq\mathcal{S}$ along with a sequence $\{\alpha_k\}_{k\in\mathcal{I}}$. If $\Delta q_k\geq\alpha_k$ for every $k\in \mathcal{I}$, then $\alpha_k\to 0$ as $k\to\infty$, $k\in\mathcal{I}$. 
\end{lemma}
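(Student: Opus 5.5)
The argument is a telescoping one on the merit function, using that $\mu_k$ is nondecreasing and bounded above by $\mu_{\max}$ (Lemma \ref{lem:meritparameterbound}). On every successful iteration $k\in\mathcal{I}\subseteq\mathcal{S}$ we have $x_{k+1}=x_k+d_k$ and $\rho_k\geq\eta_1$. Hence
\[
\varphi(x_k,\mu_k)-\varphi(x_{k+1},\mu_k)\geq \eta_1\Delta q_k\geq \eta_1\alpha_k .
\]
On unsuccessful iterations $x_{k+1}=x_k$, so nothing changes there apart from the merit parameter.

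The main obstacle is that the merit parameter varies with $k$, so $\varphi(\cdot,\mu_k)$ does not telescope directly. The plan is to work with the shifted function
\[
\psi_k:=\varphi(x_k,\mu_k)-\mu_k\kappa_c+\dots,
\]
or, more simply, to use the identity
\[
\varphi(x_{k+1},\mu_{k+1})=\varphi(x_{k+1},\mu_k)+(\mu_{k+1}-\mu_k)\norm{c_{k+1}}_1 .
\]
Since $\norm{c}_1\leq\kappa_c$ on $\mathcal{X}$ by \hyperlink{f1}{(F1)}, each increase of the penalty parameter raises the merit value by at most $(\mu_{k+1}-\mu_k)\kappa_c$. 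Summing over $k=0,\dots,K$ therefore gives
\[
\varphi(x_0,\mu_0)-\varphi(x_{K+1},\mu_{K+1})+\kappa_c(\mu_{K+1}-\mu_0)\geq \eta_1\sum_{k\in\mathcal{S},\,k\leq K}\Delta q_k .
\]
Monotonicity of $\mu_k$ makes the increments telescope to at most $\kappa_c(\mu_{\max}-\mu_0)$.

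For the left-hand side we need a lower bound on $\varphi(x_{K+1},\mu_{K+1})$. It is at least $f(x_{K+1})\geq f_{low}$, using \hyperlink{f4}{(F4)} since $x_{K+1}$ is either $x_0$ or a successful iterate. Alternatively, $f$ is bounded by $\kappa_f$ on $\mathcal{X}$ by \hyperlink{f1}{(F1)}. The left-hand side is thus bounded uniformly in $K$ by
\[
\varphi(x_0,\mu_0)-f_{low}+\kappa_c\mu_{\max}.
\]

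Every $\Delta q_k$ with $k\in\mathcal{S}$ is nonnegative: by Lemma \ref{lem:reduction}, $\Delta q_k\geq\Delta m_k^U(u_k)\geq 0$ by \hyperlink{or1}{(OR1)}. Hence $\sum_{k\in\mathcal{I}}\Delta q_k<\infty$, so $\Delta q_k\to0$ along $\mathcal{I}$. This does not by itself give $\alpha_k\to0$, since the $\alpha_k$ could be very negative. What it gives, interpreting the claim as intended for nonnegative $\alpha_k$ or as $\limsup\alpha_k\leq0$, is $\limsup_{k\in\mathcal{I}}\alpha_k\leq \lim\Delta q_k=0$. In applications $\alpha_k\geq0$, so $\alpha_k\to 0$.
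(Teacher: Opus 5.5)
Your proof is correct and follows essentially the same route as the paper: telescope the merit function over all iterations, use monotonicity and $\mu_k\leq\mu_{\max}$ to absorb the penalty-parameter increases into a $\kappa_c\mu_{\max}$ term, and bound below by $f_{low}$ to get summability. Your direct lower bound $\varphi\geq f\geq f_{low}$ is simpler than the paper's detour through the normalized function $\Phi$ and the cited monotonicity result of Pei et al. Your caveat is also right. The conclusion follows only for nonnegative $\alpha_k$; otherwise one gets just $\limsup_{k\in\mathcal{I}}\alpha_k\leq 0$. The paper's own ``summable $\Rightarrow \alpha_k\to 0$'' step tacitly uses the same nonnegativity, which holds in every application of the lemma.
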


\begin{proof}
 Since $\mathcal{I}$ is infinite we can write without loss of generality $\mathcal{I}=\{k_i:i\geq 0\}$.  By the construction of the algorithm we have
    \begin{equation}\label{eq:sum1}
        \varphi(x_{k_i},\mu_{k_i})-\varphi(x_{k_{i+1}},\mu_{k_i})\geq \eta_1 \Delta q_{k_i}\geq \eta_1 \alpha_{k_i}\quad\forall i\geq 0.
    \end{equation}
Fix an index $j\geq 0$ such that $k_j\in\mathcal{I}$. We have
\begin{align}\label{eq:telescopic}
    \sum_{i=0}^{j}\left( \varphi(x_{k_i},\mu_{k_i})-\varphi(x_{k_{i+1}},\mu_{k_i}) \right)&=\sum_{l=k_0}^{k_j}\left( \varphi(x_l,\mu_l)-\varphi(x_{l+1},\mu_l) \right)-\sum_{\substack{k_0\leq l\leq k_j \\ l\not\in I}}\left( \varphi(x_l,\mu_l)-\varphi(x_{l+1},\mu_l) \right) \nonumber \\
    &\leq \sum_{l=k_0}^{k_j}\left( \varphi(x_l,\mu_l)-\varphi(x_{l+1},\mu_l) \right) \nonumber \\
    &=\sum_{l=k_0}^{k_j}\left( \varphi(x_l,\mu_l)-\varphi(x_{l+1},\mu_{l+1}) \right) + \sum_{l=k_0}^{k_j}\norm{c_{l+1}}\left(\mu_{l+1}-\mu_{l}\right) \nonumber \\
    &\leq \varphi(x_{k_0},\mu_{k_0})-\varphi(x_{k_{j+1}},\mu_{k_{j+1}})+\kappa_c(\mu_{\max}-\mu_{k_0}) \nonumber \\
    &\leq -\varphi(x_{k_{j+1}},\mu_{k_{j+1}})+ f(x_{k_0})+\kappa_c\mu_{\max},
\end{align}
where we used the relation $\varphi(x_{l+1},\mu_l)\leq \varphi(x_l,\mu_l)$ for every $l\geq 0$ by construction of the algorithm, Lemma \ref{lem:meritparameterbound} and the monotonicity of $\{\mu_k\}_{k\geq 0}$. Hence,
\begin{equation}\label{eq:sum2}
    \frac{1}{\mu_{k_{j+1}}}\sum_{i=0}^{j}\left( \varphi(x_{k_i},\mu_{k_i})-\varphi(x_{k_{i+1}},\mu_{k_i}) \right)\leq -\Phi\left(x_{k_{j+1}},\mu_{k_{j+1}}\right)+\frac{f(x_{k_0})-f_{low}+\kappa_c\mu_{\max}}{\mu_{-1}},
\end{equation}
where $\Phi(x,\mu):=\frac{\varphi(x,\mu)-f_{low}}{\mu}$. Summing \eqref{eq:sum1} up to $j$ and combining with \eqref{eq:sum2} and Lemma \ref{lem:meritparameterbound}, we deduce the bound
\begin{equation}\label{eq:sum3}
-\Phi\left(x_{k_{j+1}},\mu_{k_{j+1}}\right)+\frac{f(x_{k_0})-f_{low}+\kappa_c\mu_{\max}}{\mu_{-1}}\geq  \frac{\eta_1}{\mu_{\max}} \sum_{i=0}^{j} \alpha_{k_i}.
\end{equation}
The sequence $\{\Phi(x_{k_j},\mu_{k_j})\}_{j\geq 0}$ is non-negative and non-increasing by \cite[Lemma 5]{pei2023sequential}, thus it converges to some finite value $\Phi^*$ as $j\to\infty$. The relation \eqref{eq:sum3} in the limit $j\to\infty$ implies that the sequence $\{\alpha_{k_i}\}_{i\geq 0}$ is summable, which finally yields $\alpha_{k_i}\to 0$ as $i\to\infty$.
\end{proof}

In order to prove global convergence, a common strategy is to show that all steps are eventually (very) successful. That way, the algorithm will keep making progress towards a solution. To this end, we hope of using an argument by contradiction. Lemma \ref{lem:technicallemma} provides a sufficient criterion when the set of all successful iterations is infinite. In turn, Lemma \ref{lem:technicallemma1} gives an even weaker condition when the objective function is lower bounded:

\begin{corollary}\label{cor:technicalcorollary}
    \textit{Suppose \hyperlink{f1}{(F1)}-\hyperlink{f4}{(F4)} hold and that the set of all successful iterations $\mathcal{S}$ is infinite. If $\Delta q_k\geq\min\left\{ \delta_1,\frac{\delta_2}{\sqrt{\sigma_k}} \right\}$ for every $k\in \mathcal{S}$, for some $\delta_1,\delta_2>0$ independent of $k$, then $\sigma_k\to\infty$ as $k\to\infty$, $k\in\mathcal{S}$. Moreover, not all sufficiently large iterations are successful.} 
\end{corollary}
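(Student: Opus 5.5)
Combining Lemma \ref{lem:technicallemma1} with Lemma \ref{lem:technicallemma} gives the result. Take $\mathcal{I}=\mathcal{S}$ and $\alpha_k:=\min\{\delta_1,\delta_2/\sqrt{\sigma_k}\}$ for $k\in\mathcal{S}$. The hypothesis $\Delta q_k\geq\alpha_k$ on the infinite set $\mathcal{S}$, together with \hyperlink{f1}{(F1)}-\hyperlink{f4}{(F4)}, lets Lemma \ref{lem:technicallemma1} yield $\alpha_k\to 0$ as $k\to\infty$, $k\in\mathcal{S}$. Since $\delta_1>0$ is a fixed constant, the minimum is eventually attained by the second term. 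Hence $\delta_2/\sqrt{\sigma_k}\to 0$ along $\mathcal{S}$, that is, $\sigma_k\to\infty$ as $k\to\infty$, $k\in\mathcal{S}$. This proves the first claim.

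For the second claim, I would apply Lemma \ref{lem:technicallemma} with $\mathcal{I}=\mathcal{S}$. Its hypotheses are now in place: $\sigma_k\to\infty$ along $\mathcal{S}$, the same lower bound on $\Delta q_k$ holds on $\mathcal{S}$, and \hyperlink{f1}{(F1)}-\hyperlink{f3}{(F3)} hold. The lemma then says every sufficiently large $k\in\mathcal{S}$ is very successful, so $\sigma_{k+1}\leq\sigma_k$ for those $k$. Its final assertion says that not all sufficiently large $k$ belong to $\mathcal{S}$. This is exactly the claim that not all sufficiently large iterations are successful.

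As a sanity check, the last part can be argued directly by contradiction. Suppose every $k\geq k_0$ were successful. The lower bound on $\Delta q_k$ would hold for all $k\geq k_0$, and eventually every such iteration would be very successful. Then $\{\sigma_k\}_{k\geq k_1}$ would be nonincreasing, since the successful updates keep $\sigma_{k+1}=\sigma_k$ and the very successful updates give $\sigma_{k+1}\leq\sigma_k$. In particular $\sigma_k\leq\sigma_{k_1}$ for all large $k$, contradicting $\sigma_k\to\infty$ along $\mathcal{S}$, which would be all large $k$.

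The only delicate point is the use of Lemma \ref{lem:technicallemma1}. That lemma needs the ratio test $\rho_k\geq\eta_1$ on the indices of $\mathcal{I}$, so that the merit decrease is at least $\eta_1\Delta q_k$, and this holds because $\mathcal{I}=\mathcal{S}$ consists of successful iterations. It also needs the algorithm's nonincrease of $\varphi(\cdot,\mu_l)$ between consecutive iterates, which is built into the algorithm.
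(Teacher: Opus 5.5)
Your proposal is correct and follows the paper's proof: the first claim comes from Lemma~\ref{lem:technicallemma1} with $\mathcal{I}=\mathcal{S}$ and $\alpha_k=\min\{\delta_1,\delta_2/\sqrt{\sigma_k}\}$, exactly as you argue. For the second claim, the paper only notes that $\sigma_k$ can increase only at unsuccessful iterations; this is your ``sanity check'' argument, and it is also how the final clause of Lemma~\ref{lem:technicallemma} that you cite is proved, so going through very successful iterations is unnecessary but does no harm.
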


\begin{proof}
    The first part of the corollary follows from the previous lemma for $\mathcal{I}:=\mathcal{S}$ and $\alpha_k:=\min\left\{ \delta_1,\frac{\delta_2}{\sqrt{\sigma_k}} \right\}$. The second part follows from the construction of the algorithm, since the cubic regularization parameter increases under unsuccessful iterations.
\end{proof}

The next result handles global first-order convergence when Algorithm \ref{alg:scp} produces only a finite number of successful iterates.

\begin{lemma}\label{lem:finitecase}
    Suppose that \hyperlink{f1}{(F1)}-\hyperlink{f3}{(F3)} hold and that the set $\mathcal{S}$ of (very) successful iterates is finite. Then, there exists $k^*\geq0$ such that $\nabla_x \mathcal{L}(x_k,\lambda_k)=0$ and $c_k=0$ for every $k\geq k^*+1$. 
\end{lemma}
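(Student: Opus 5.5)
Since $\mathcal{S}$ is finite, let $k^*$ be its largest element (or $k^*=-1$ if $\mathcal{S}=\emptyset$). Then every iteration $k\geq k^*+1$ is unsuccessful. Consequently $x_k=x_{k^*+1}$ for all such $k$, and $\sigma_{k+1}\geq\gamma_1\sigma_k$, so $\sigma_k\to\infty$. Because the iterate is frozen, the quantities $c_k$, $g_k$, $A_k$ and $P_k$ are all constant for $k\geq k^*+1$. It therefore suffices to show $c_{k^*+1}=0$ and $\nabla_x\mathcal{L}(x_{k^*+1},\lambda_k)=0$. We argue by contradiction, using Lemma \ref{lem:technicallemma} with $\mathcal{I}:=\{k\geq k^*+1\}$. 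That lemma would force $\rho_k>\eta_2$ for all large $k\in\mathcal{I}$, i.e. very successful iterations, contradicting finiteness of $\mathcal{S}$. So we only need a lower bound $\Delta q_k\geq\min\{\delta_1,\delta_2/\sqrt{\sigma_k}\}$ whenever $c\neq0$ or the reduced gradient is nonzero.

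\textbf{Case $c_{k^*+1}\neq 0$.} By Lemma \ref{lem:reduction} and $\Delta m_k^U(u_k)\geq0$ (from (OR1) and Lemma \ref{lem:deltagkbound}), we have $\Delta q_k\geq\tau\mu_k\beta_k\norm{c_k}_1\geq \tau\mu_{-1}\beta_k\norm{c}_1$, using monotonicity of $\mu_k$. By \eqref{eq:beta}, $\beta_k\geq\min\{1,\theta/(\norm{v_k^c}\sqrt{\sigma_k})\}$. We need $\norm{v_k^c}$ bounded above uniformly; Lemma \ref{lem:normalstepbound} gives $\norm{v_k^c}\leq\kappa_v\norm{c}_1$. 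Hence $\Delta q_k\geq\min\{\delta_1,\delta_2/\sqrt{\sigma_k}\}$ with $\delta_1=\tau\mu_{-1}\norm{c}_1$ and $\delta_2=\tau\mu_{-1}\theta/\kappa_v$. This yields the contradiction, so $c_k=0$ for $k\geq k^*+1$.

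\textbf{Case $c=0$.} Now $v_k=0$, so $\widetilde g_k=P_k^\top g_k$ is a constant vector $\widetilde g$. If $\widetilde g\neq0$, Lemma \ref{lem:deltagkbound} with $\norm{H_k}\leq\kappa_H$ gives
\[
\Delta q_k=\Delta m_k^U(u_k)\geq \frac{3}{10}\norm{\widetilde g}\min\left\{\frac{\norm{\widetilde g}}{1+\kappa_H},\frac{\sqrt{\norm{\widetilde g}}}{\sqrt{\sigma_k}}\right\},
\]
which again has the required form and yields the same contradiction. Thus $P_{k}^\top g_{k}=0$, and by Lemma \ref{lem:dualvariables}(i), $\nabla_x\mathcal{L}(x_k,\lambda_k^*)=0$.

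\textbf{Main obstacle: the multipliers.} The statement concerns $\lambda_k$, not $\lambda_k^*$. Since $v_k=0$, condition \eqref{eq:lambdas} gives $A_k(g_k+A_k^\top\lambda_k)=0$. Together with $P_k^\top g_k=0$ we have $g_k\in\mathcal{R}(A_k^\top)$, so $g_k+A_k^\top\lambda_k\in\mathcal{R}(A_k^\top)\cap\mathcal{N}(A_k)=\{0\}$, and hence $\nabla_x\mathcal{L}(x_k,\lambda_k)=0$ exactly. A secondary point to check is that Lemma \ref{lem:technicallemma} only uses (F1)--(F3), which are exactly our hypotheses, and that the unsuccessful branch (including a rejected second-order correction) really leaves $x_k$ unchanged.
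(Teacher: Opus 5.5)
Your proof is correct and follows essentially the same route as the paper. You freeze the iterate on the tail $\{k\geq k^*+1\}$, note $\sigma_k\to\infty$, contradict finiteness of $\mathcal{S}$ via Lemma~\ref{lem:technicallemma} using the lower bounds on $\Delta q_k$ from Lemma~\ref{lem:reduction} and Lemma~\ref{lem:deltagkbound}, and finish with \eqref{eq:lambdas} and $v_k=0$. The one difference is that you treat $c$ before $\widetilde g_k$, which is slightly cleaner than the paper's order: only once $c=0$ (so $v_k=0$) is $\widetilde g_k=P_k^\top g_k$ genuinely constant on the tail, whereas the paper's fixed $\xi=\norm{\widetilde g_k}$ tacitly ignores that $v_k=\beta_k v_k^c$ and $\lambda_k$ vary with $k$.
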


\begin{proof}
    Let $k^*$ be the last (very) successful iterate and set $x^*:=x_{k^*+1}=x_{k^*+i}$ for every $i\geq 2$. Suppose that $\norm{\widetilde{g}_k}>0$ for every $k\geq k^*+1$, and let $\xi:=\norm{\widetilde{g}_k}$ for all $k\geq k^*+1$. Then, we have
    \begin{equation}
        \Delta q_k\geq \Delta m_k^U(u_k)\geq \frac{3}{10}\xi^2\min\left\{\frac{1}{1+\kappa_H},\frac{1}{\sqrt{\kappa_{\widetilde{g}}}\sqrt{\sigma_k}}  \right\}\quad\forall k\geq k^*+1,
    \end{equation}
    by Lemma \ref{lem:deltagkbound}. Define $\mathcal{I}:=\{k\geq0:k\geq k^*+1\}$. By the construction of the algorithm ($\gamma_1>1$), it is deduced that $\sigma_k\to\infty$ as $k\to \infty$, $k\in\mathcal{I}$. Lemma \ref{lem:technicallemma} implies that all iterations in $\mathcal{I}$ sufficiently large are successful, which is a contradiction. Therefore, $\widetilde{g}_k=0$ for every $k\geq k^*+1$.\par
    Similarly, suppose that $\psi:=\norm{c_k}>0$ for every $k\geq k^*+1$. Then
    \begin{equation}
        \Delta q_k\geq \tau\mu_k\beta_k\norm{c_k}\geq\tau\mu_{-1}\min\left\{ \psi,\frac{\theta}{\kappa_v\sqrt{\sigma_k}} \right\}\quad\forall k\geq k^*+1.
    \end{equation}
    Once again, we reach a contradiction by invoking Lemma \ref{lem:technicallemma} for the same set of iterates $\mathcal{I}$. This implies $v_k=0$ for every $k\geq k^*+1$, which then, along with Lemma \ref{lem:dualvariables} and \eqref{eq:lambdas}, yields $\nabla_x\mathcal{L}(x_k,\lambda_k)=0$ for all $k\geq k^*+1$.
\end{proof}

Our first task is feasibility in the limit:

\begin{theorem}\label{th:constraintsconvergence}
    Suppose \hyperlink{f1}{(F1)}-\hyperlink{f4}{(F4)} hold. Then, $\lim\limits_{k\to\infty}\norm{c_k}_1= 0$.
\end{theorem}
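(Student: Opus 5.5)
We split into cases according to whether the set $\mathcal{S}$ of successful iterations is finite. If $\mathcal{S}$ is finite, Lemma \ref{lem:finitecase} gives $c_k=0$ for all $k\geq k^*+1$, and we are done. So we assume $\mathcal{S}$ is infinite. The iterate is unchanged at unsuccessful iterations, so $c_k$ is constant between consecutive successful iterations. It therefore suffices to show $\norm{c_k}_1\to 0$ along $k\in\mathcal{S}$.

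The first step is to show $\norm{c_k}_1\to 0$ along $\mathcal{S}$. By Lemma \ref{lem:reduction}, every $k$ satisfies $\Delta q_k\geq \tau\mu_k\beta_k\norm{c_k}_1\geq \tau\mu_{-1}\beta_k\norm{c_k}_1$. Applying Lemma \ref{lem:technicallemma1} with $\mathcal{I}=\mathcal{S}$ and $\alpha_k:=\tau\mu_{-1}\beta_k\norm{c_k}_1$ gives $\beta_k\norm{c_k}_1\to0$ on $\mathcal{S}$. By \eqref{eq:beta} and Lemma \ref{lem:normalstepbound},
\[
\beta_k\norm{c_k}_1\geq \min\left\{\norm{c_k}_1,\frac{\theta\norm{c_k}_1}{\kappa_v\norm{c_k}_1\sqrt{\sigma_k}}\right\}=\min\left\{\norm{c_k}_1,\frac{\theta}{\kappa_v\sqrt{\sigma_k}}\right\}.
\]
Hence on $\mathcal{S}$, for each $k$ either $\norm{c_k}_1$ is small or $\sigma_k$ is large.

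The main obstacle is to rule out a subsequence of $\mathcal{S}$ on which $\norm{c_k}_1\geq\epsilon>0$ while $\sigma_k\to\infty$. We argue by contradiction and suppose such a subsequence exists. We would like to apply Lemma \ref{lem:technicallemma} to the set $\mathcal{I}$ of all iterations with $\norm{c_k}_1\geq \epsilon/2$, which includes unsuccessful ones. On $\mathcal{I}$ we have $\Delta q_k\geq \min\{\delta_1,\delta_2/\sqrt{\sigma_k}\}$ with $\delta_1=\tau\mu_{-1}\epsilon/2$ and $\delta_2=\tau\mu_{-1}\theta/\kappa_v$. Lemma \ref{lem:technicallemma} then says that iterations in $\mathcal{I}$ are very successful once $\sigma_k$ exceeds a threshold $\bar\sigma$, where $\bar\sigma$ depends only on $\kappa_\varphi,\delta_1,\delta_2,\eta_2$. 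Consequently, within $\mathcal{I}$, $\sigma_k$ can only grow through unsuccessful steps taken while $\sigma_k<\bar\sigma$. This gives $\sigma_k\leq \gamma_2\max\{\bar\sigma,\sigma_{k_0}\}$ whenever $\sigma_k$ enters a stretch of $\mathcal{I}$ coming from a successful iteration.

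The remaining difficulty is to control how $\sigma_k$ behaves when the iterates enter $\mathcal{I}$ from outside. The iterate changes only at successful steps, and $\sigma_k$ does not increase at those steps. So if $\sigma_k$ was large while $\norm{c_k}_1<\epsilon/2$, it can stay large on entering $\mathcal{I}$. In that case, however, the iteration is very successful and $\sigma$ does not increase while we remain in $\mathcal{I}$. I would then return to the summability argument. On $\mathcal{I}\cap\mathcal{S}$ with large $\sigma_k$, the reduction is at least $\delta_2/\sqrt{\sigma_k}$, and summability from Lemma \ref{lem:technicallemma1} allows only finitely many such steps of any fixed size. I would combine this with Lipschitz continuity of $c$ and $\norm{d_k}\leq\kappa_d/\sqrt{\sigma_k}$ (Lemma \ref{lem:fullstepbound}). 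To move $\norm{c}_1$ from $\epsilon/2$ up to $\epsilon$ requires $\sum\norm{d_k}\geq \epsilon/(2L_c)$ over successful steps in $\mathcal{I}$. Each such step contributes a reduction of at least $\min\{\delta_1,\delta_2/\sqrt{\sigma_k}\}\gtrsim \norm{d_k}$, up to constants, so infinitely many such crossings would make $\sum\alpha_k$ diverge, contradicting Lemma \ref{lem:technicallemma1}. I expect this crossing argument, bounding the path length by the merit decrease, to be the crux. It yields $\limsup\norm{c_k}_1\leq\epsilon$ for every $\epsilon>0$, and hence the theorem.
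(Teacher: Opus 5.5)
\textbf{Gap: the entry step of each upcrossing is not paid for.} Your crux claim is that raising $\norm{c}_1$ from $\epsilon/2$ to $\epsilon$ forces $\sum\norm{d_k}\geq\epsilon/(2L_c)$ over successful steps \emph{in} $\mathcal{I}$. But each upcrossing begins with a successful step at some iteration $a_j$ with $\norm{c_{a_j}}_1<\epsilon/2\leq\norm{c_{a_j+1}}_1$. So $a_j\notin\mathcal{I}$, and your bound $\Delta q_k\geq\min\{\delta_1,\delta_2/\sqrt{\sigma_k}\}\gtrsim\norm{d_k}$ does not apply there. At $a_j$, Lemma \ref{lem:reduction} only gives a reduction proportional to $\norm{c_{a_j}}_1$. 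Nothing in your argument bounds the jump $\norm{c_{a_j+1}}_1-\norm{c_{a_j}}_1\leq L_c\norm{d_{a_j}}$. It could be a fixed fraction of $\epsilon$, or even carry $\norm{c}_1$ past $\epsilon$ in one step. Under (OR1) alone, for instance, a tangential step of length of order $\kappa_H/\sigma_{a_j}$ with essentially zero model decrease is admissible. In that case the steps inside $\mathcal{I}$ need not have any definite total length, and the divergence of the summed reductions does not follow. Your paragraph on entering $\mathcal{I}$ has the worry backwards. A large $\sigma_{a_j}$ is harmless, since $\norm{d_{a_j}}\leq\kappa_d/\sqrt{\sigma_{a_j}}$ by Lemma \ref{lem:fullstepbound}; the problematic case is a moderate $\sigma_{a_j}$.

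\textbf{How to close it.} You can close this with your own ingredients.
Since $a_j\in\mathcal{S}$, you have $\sigma_{a_j+1}\leq\sigma_{a_j}$. Your stretch bound then gives $\sigma_{s_j}\leq\gamma_2\max\{\bar\sigma,\sigma_{a_j}\}$ for the first successful iteration $s_j$ of that stretch. Because $\norm{c_{s_j}}_1\geq\epsilon/2$ and $\beta_{s_j}\norm{c_{s_j}}_1\to0$, you get $\sigma_{s_j}\to\infty$, hence $\sigma_{a_j}\to\infty$. The entry jump therefore tends to $0$, and for large $j$ the steps in $\mathcal{I}$ have total length at least $\epsilon/(4L_c)$. (Alternatively, Lemma \ref{lem:nullstepconvergence} with Lemma \ref{lem:lipcon} controls the jump for small $\epsilon$, but that route uses (OR2)--(OR3).)

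You should also state the dichotomy explicitly. If the subsequence with $\norm{c_k}_1\geq\epsilon$ meets only finitely many stretches of $\mathcal{I}$, it lies in one infinite stretch on which $\sigma_k$ is bounded, contradicting $\sigma_k\to\infty$ along it.

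The paper avoids the entry issue by ordering things differently. It first proves $\liminf_{k}\norm{c_k}_1=0$, which is essentially your single-infinite-stretch case. It then uses a \emph{down}crossing, from a successful $m_j$ with $\norm{c_{m_j}}_1>2\varepsilon'$ to the first $n_j$ with $\norm{c_{n_j}}_1<\varepsilon'$. Every step on that path starts from a point with $\norm{c}_1\geq\varepsilon'$, so the entire path length is paid for by the merit decrease.
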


\begin{proof}
    We first prove that $\liminf\limits_{k\to\infty}\norm{c_k}_1=0$. We will then use uniform continuity of $c$ over $\mathcal{X}$ (which is due to Lipschitz continuity) to show the stronger convergence result. Suppose that this is not the case. Then, there exists some $\varepsilon>0$ such that $\norm{c_k}_1\geq \varepsilon$ $\forall k\geq 0$. Therefore, for every $k\geq0$, from Lemma \ref{lem:normalstepbound} we have
    \begin{equation}\label{eq:good2}
        \Delta q_k\geq \tau\mu_k\beta_k\norm{c_k}_1\geq \tau\mu_{-1}\min\left\{ \varepsilon,\frac{\theta}{\kappa_v\sqrt{\sigma_k}} \right\}.
    \end{equation}
 As $\mathcal{S}$ is infinite (otherwise by Lemma \ref{lem:finitecase} we have nothing to prove) we can apply Corollary \ref{cor:technicalcorollary} to deduce that not all $k\geq0$ sufficiently large belong to $\mathcal{S}$. We then follow an argument similar to \cite[Theorem 2.5]{cartis2011adaptive}: Because $\mathcal{S}$ is infinite, there exists an infinite subsequence $\{k_j\}$ of (very) successful iterates in $\mathcal{S}$ such that $k_j-1$ is unsuccessful for every $j\geq0$. From the construction of the algorithm ($\sigma_{k_j}\leq\gamma_2\sigma_{k_j-1}$) we get $\sigma_{k_j-1}\to\infty$ as $j\to\infty$. Consider the set $\mathcal{J}:=\{k_j-1:j\geq0\}$. By \eqref{eq:good2} and Lemma \ref{lem:technicallemma} for $\mathcal{I}:=\mathcal{J}$ we deduce that the iterate $k_j-1$ is very successful for $j$ sufficiently large. This is a contradiction.\par
Hence, $\liminf\limits_{k\to\infty}\norm{c_k}_1=0$. Suppose now that $\lim\limits_{k\to\infty}\norm{c_k}_1\neq0$. Since we still assume that $\mathcal{S}$ is infinite (without loss of generality due to Lemma \ref{lem:finitecase}), there exists $\varepsilon'>0$ and an infinite subsequence $\{m_j\}\subseteq\mathcal{S}$ such that 
\begin{equation}
    \norm{c_{m_j}}_1>2\varepsilon'\;\;\forall j\geq0.
\end{equation}
Due to  $\liminf\limits_{k\to\infty}\norm{c_k}_1=0$, for that $\varepsilon'$ there exists, for every $j$, a ``first almost feasible" iterate, i.e., a successful iterate $n_j>m_j$ that satisfies $\norm{c_{n_j}}_1<\varepsilon'$, and $\norm{c_k}_1\geq\varepsilon'$ for every $m_j\leq k<n_j$. We then define the set $\mathcal{T}:=\cup_{j\geq0}\mathcal{T}_j$, where $\mathcal{T}_j:=\{k\in\mathcal{S}:m_j\leq k<n_j\}$. The relation \eqref{eq:good2} holds everywhere on $\mathcal{T}$, where we now have $\varepsilon'$ instead of $\varepsilon$ inside the minimum function. Since $\mathcal{T}$ is an infinite subset of $\mathcal{S}$, the first conclusion of Corollary \ref{cor:technicalcorollary} still applies. That is, 
\begin{equation}
    \varphi\left(x_{k},\mu_{k}\right)-\varphi\left(x_{k+1},\mu_{k}\right)\geq\eta_1\tau\mu_{-1}\frac{\theta}{\kappa_v\sqrt{\sigma_{k}}}\quad\forall k \in \mathcal{T} :\; k \text{\;is sufficiently large}.
\end{equation}

So, Lemma \ref{lem:fullstepbound} yields
\begin{equation}\label{eq:cool}
    \varphi(x_k,\mu_k)-\varphi(x_{k+1},\mu_k)\geq\frac{\eta_1\rho\mu_{-1}\theta}{\kappa_v \kappa_d}\norm{d_k} \;\;\forall k\in\mathcal{T}:\;\text{$k$ is sufficiently large}.
\end{equation}
In addition, for all $j$ sufficiently large, we have
\begin{equation}\label{eq:verycool}
     \sum_{k\in\mathcal{T}_j}\left( \varphi(x_{k},\mu_{k})-\varphi(x_{k+1},\mu_{k}) \right)\leq\varphi(x_{m_j},\mu_{m_j})-\varphi(x_{l_j+1},\mu_{l_j+1})+\sum_{k=m_j}^{l_j}\norm{c_{k+1}}(\mu_{k+1}-\mu_{k}),
\end{equation}
verbatim \eqref{eq:telescopic}, where $l_j\in\mathcal{S}$ represents the largest element of $\mathcal{T}_j$. By Lemma \ref{lem:meritparameterbound} we have $\mu_{j+i}=\mu_{j}$ for all $i\geq 0$ for every $j$ sufficiently large. Hence, we can argue that, for all $j$ large enough, \eqref{eq:verycool} is equivalent to
\begin{equation}\label{eq:reallycool}
     \frac{1}{\mu_{m_j}}\sum_{k\in\mathcal{T}_j}\left( \varphi(x_{k},\mu_{k})-\varphi(x_{k+1},\mu_{k}) \right)\leq \Phi(x_{m_j},\mu_{m_j})-\Phi(x_{l_j+1},\mu_{l_j+1}).
\end{equation}
Recall from Lemma \ref{lem:technicallemma1} that $\Phi$ is non-increasing in the space of successful iterates. Hence, since $l_j+1\leq n_j\in\mathcal{S}$, from \eqref{eq:cool}-\eqref{eq:reallycool}, Lemma \ref{lem:meritparameterbound}, and the fact $x_{k+1}=x_k$ for all $k\in\mathcal{U}$, we get
\begin{align}
    \Phi(x_{m_j},\mu_{m_j})-\Phi(x_{n_j},\mu_{n_j})&\geq \frac{\eta_1\rho\mu_{-1}\theta}{\kappa_v \kappa_d \mu_{\max}} \sum_{k\in\mathcal{T}_j} \norm{d_k} \nonumber \\
    &= \frac{\eta_1\rho\mu_{-1}\theta}{\kappa_v \kappa_d \mu_{\max}} \sum_{k=m_j}^{n_j-1}\norm{x_{k+1}-x_k} \nonumber \\
    &\geq \frac{\eta_1\rho\mu_{-1}\theta}{\kappa_v \kappa_d \mu_{\max}} \norm{x_{n_j}-x_{m_j}}.
\end{align}

Because \hyperlink{f4}{(F4)} implies that $\{\Phi(x_i,\mu_i)\}_{i\geq 0}$ is convergent over $\mathcal{S}$ (Lemma \ref{lem:technicallemma1}), we have that $\Phi(x_{m_j},\mu_{m_j})-\Phi(x_{n_j},\mu_{n_j})\to 0$ as $m_j,n_j\to \infty$ $(j\to \infty)$, thus $\norm{x_{n_j}-x_{m_j}}\to 0$ as $j\to\infty$. In addition, $c$ is uniformly continuous over $\mathcal{X}$, therefore $\norm{c_{n_j}-c_{m_j}}_1\to 0$ as $j\to\infty$. This is however impossible, as $\norm{c_{n_j}-c_{m_j}}_1\geq \norm{c_{m_j}}_1-\norm{c_{n_j}}_1\geq\varepsilon'$ for every $j\geq0$.
\end{proof}

The first part of Theorem \ref{th:firstorderconvergence} follows after feasibility in the limit and a simple bound on the distance between the true and approximate dual variables:

\begin{lemma}\label{lem:dualbounds}
    Suppose that \hyperlink{f1}{(F1)}-\hyperlink{f2}{(F2)} holds. Then, for every $k\geq 0$, we have
    \begin{enumerate}
        \item $\|\lambda_k-\lambda_k^{*}\| \leq \frac{r_{\lambda}}{\gamma_A^2}\norm{v_k}$.
        \item $\norm{\lambda_k^*}\leq \frac{\kappa_A \kappa_g}{\gamma_A^2}$ and  $\norm{\lambda_k}\leq \kappa_{\lambda}$ for some $\kappa_{\lambda}>0$. 
    \end{enumerate}
\end{lemma}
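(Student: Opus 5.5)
The plan is to write the difference of the two multiplier vectors as the solution of a linear system governed by $A_kA_k^\top$, and then read off the size of $\lambda_k-\lambda_k^*$ from the residual condition \eqref{eq:lambdas}. The starting point is the identity
\begin{equation*}
A_k\left(g_k+A_k^\top\lambda_k\right)=A_kg_k+A_kA_k^\top\lambda_k=A_kA_k^\top\left(\lambda_k-\lambda_k^*\right),
\end{equation*}
which holds because \eqref{eq:lag2} gives $A_kA_k^\top\lambda_k^*=-A_kg_k$. Under \hyperlink{f2}{(F2)}, $A_kA_k^\top$ is symmetric positive definite with smallest eigenvalue $\sigma_{\min}(A_k)^2\geq\gamma_A^2$. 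Hence $\norm{(A_kA_k^\top)^{-1}}\leq\gamma_A^{-2}$. Combining this with \eqref{eq:lambdas} yields
\begin{equation*}
\norm{\lambda_k-\lambda_k^*}\leq\frac{1}{\gamma_A^2}\norm{A_k(g_k+A_k^\top\lambda_k)}\leq\frac{r_\lambda}{\gamma_A^2}\norm{v_k},
\end{equation*}
which is part 1.

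For the first bound in part 2, I would use $\lambda_k^*=-(A_kA_k^\top)^{-1}A_kg_k$ together with the same inverse bound. This gives $\norm{\lambda_k^*}\leq\gamma_A^{-2}\norm{A_k}\norm{g_k}$.
\begin{itemize}
\item By \hyperlink{f1}{(F1)}, $\norm{g_k}\leq\kappa_g$.
\item The spectral norm of $A_k$ is bounded by the sum of the norms of its constraint gradients, $\norm{A_k}\leq\sum_i\norm{\nabla c_i(x_k)}\leq\sum_i\kappa_{\nabla c_i}=\kappa_A$. This follows from the triangle inequality applied to $A_k^\top y=\sum_i y_i\nabla c_i$ with $|y_i|\leq\norm{y}$.
\end{itemize}
Together these give $\norm{\lambda_k^*}\leq\kappa_A\kappa_g/\gamma_A^2$.

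For the bound on $\norm{\lambda_k}$, I would combine the triangle inequality with part 1:
\begin{equation*}
\norm{\lambda_k}\leq\norm{\lambda_k^*}+\frac{r_\lambda}{\gamma_A^2}\norm{v_k}.
\end{equation*}
It then remains to bound $\norm{v_k}$ uniformly. Lemma \ref{lem:normalstepbound} gives $\norm{v_k}\leq\kappa_v\norm{c_k}_1\leq\kappa_v\kappa_c$, using the bounds on the $c_i$ from \hyperlink{f1}{(F1)}. The alternative route via \eqref{eq:beta} gives $\norm{v_k}\leq 1/\sqrt{\sigma_k}\leq 1/\sqrt{\sigma_{\min}}$, but I would avoid it because it introduces $\sigma_{\min}$ and is not needed. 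With the first route, one can take
\begin{equation*}
\kappa_\lambda:=\frac{\kappa_A\kappa_g+r_\lambda\kappa_v\kappa_c}{\gamma_A^2}.
\end{equation*}

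There is no real obstacle in this argument. The only points needing care are the norm bound $\norm{A_k}\leq\kappa_A$ and the eigenvalue relation $\lambda_{\min}(A_kA_k^\top)=\sigma_{\min}(A_k)^2$, which holds because $A_k$ has full row rank under \hyperlink{f2}{(F2)}.
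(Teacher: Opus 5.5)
Your proposal is correct and matches the paper's proof essentially step for step. In part 1, the paper names the residual $\xi_k$ and writes $\lambda_k-\lambda_k^*=-(A_kA_k^\top)^{-1}\xi_k$, which is your identity. In part 2, it uses the triangle inequality together with Lemma~\ref{lem:normalstepbound} and obtains the same constant $\kappa_\lambda=(\kappa_A\kappa_g+r_\lambda\kappa_v\kappa_c)/\gamma_A^2$. You additionally give the justification of $\norm{A_k}\le\kappa_A$ and $\norm{(A_kA_k^\top)^{-1}}\le\gamma_A^{-2}$, which the paper leaves implicit.
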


\begin{proof}
    \begin{enumerate}
        \item By \eqref{eq:lambdas}, there exists a vector $\xi_k \in \mathbb{R}^m$ such that
        \begin{equation*}
            A_k g_k + A_k A_k^{\top} \lambda_k + \xi_k = 0 \quad \text{and} \quad \|\xi_k\| \leq r_\lambda \|v_k\|.
        \end{equation*}
        Therefore, by the definition of $\lambda_k^*$,
        \begin{align*}
            \norm{\lambda_k - \lambda_k^*} &= \norm{\lambda_k + (A_k A_k^\top)^{-1} A_k g_k} \\
            &= \norm{\lambda_k - (A_k A_k^\top)^{-1} (A_k A_k^\top \lambda_k + \xi_k)} \\
            &= \norm{(A_k A_k^\top)^{-1} \xi_k} \\
            &\leq \frac{r_\lambda}{\gamma_A^2} \|v_k\|,
        \end{align*}
        where the last inequality follows by \hyperlink{f2}{(F2)}.
        \item The first bound follows directly from \hyperlink{f1}{(F1)} and \hyperlink{f2}{(F2)}. Further, from part (1) we have
    \begin{equation*}
    \norm{\lambda_k}\leq \norm{\lambda_k^*-\lambda_k}+\norm{\lambda_k^*}\leq \frac{1}{\gamma_A^2}\left( r_\lambda \kappa_v\kappa_c+\kappa_A\kappa_g\right)=:\kappa_\lambda.
\end{equation*}
    \end{enumerate}
\end{proof}

\begin{theorem}\label{th:firstorderconvergence1}
    Suppose that \hyperlink{f1}{(F1)}-\hyperlink{f4}{(F4)} hold. Then $\lim\limits_{k\to\infty}\norm{\nabla_x\mathcal{L}(x_k,\lambda_k)}=0$.
\end{theorem}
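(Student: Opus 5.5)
We first reduce the claim to showing $\lim_{k\to\infty}\norm{\widetilde{g}_k}=0$. Indeed, $\nabla_x\mathcal{L}(x_k,\lambda_k)=\nabla_x\mathcal{L}(x_k,\lambda_k^*)+A_k^\top(\lambda_k-\lambda_k^*)$, so by Lemma \ref{lem:dualvariables}(i) and Lemma \ref{lem:dualbounds}(1),
\begin{equation*}
\norm{\nabla_x\mathcal{L}(x_k,\lambda_k)}\leq \norm{P_k^\top g_k}+\frac{\kappa_A r_\lambda}{\gamma_A^2}\norm{v_k}.
\end{equation*}
Moreover $\widetilde{g}_k=P_k^\top g_k+P_k^\top H_k v_k$, so $\norm{P_k^\top g_k}\leq\norm{\widetilde{g}_k}+\kappa_H\norm{v_k}$. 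By Lemma \ref{lem:normalstepbound} and Theorem \ref{th:constraintsconvergence}, $\norm{v_k}\leq\kappa_v\norm{c_k}_1\to0$. Hence it suffices to prove $\norm{P_k^\top g_k}\to0$, or equivalently $\norm{\widetilde{g}_k}\to 0$. If $\mathcal{S}$ is finite, Lemma \ref{lem:finitecase} already gives the result, so we assume $\mathcal{S}$ is infinite.

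Step one is $\liminf_k\norm{\widetilde{g}_k}=0$, by contradiction, mirroring the first part of the proof of Theorem \ref{th:constraintsconvergence}. Suppose $\norm{\widetilde{g}_k}\geq\varepsilon$ for all $k$. Lemma \ref{lem:reduction}, Lemma \ref{lem:deltagkbound}, \hyperlink{f3}{(F3)} and the bound $\norm{\widetilde{g}_k}\leq\kappa_{\widetilde{g}}$ give
\begin{equation*}
\Delta q_k\geq\frac{3}{10}\varepsilon\min\left\{\frac{\varepsilon}{1+\kappa_H},\frac{\sqrt{\varepsilon}}{\sqrt{\sigma_k}}\right\}.
\end{equation*}
This bound has the form required in Corollary \ref{cor:technicalcorollary}. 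That corollary implies infinitely many unsuccessful iterations, and in particular an infinite set $\{k_j-1\}$ of unsuccessful iterations followed by successful ones, along which $\sigma_{k_j-1}\to\infty$. Lemma \ref{lem:technicallemma} applied to this set then forces those iterations to be very successful, which is a contradiction.

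Step two upgrades $\liminf$ to $\lim$ and is the main obstacle. Suppose there are $\varepsilon'>0$ and successful indices $m_j$ with $\norm{P_{m_j}^\top g_{m_j}}>2\varepsilon'$. Let $n_j>m_j$ be the first successful index with $\norm{P_{n_j}^\top g_{n_j}}<\varepsilon'$, and set $\mathcal{T}_j$ to be the successful indices in $[m_j,n_j)$. We work with $P^\top g$ rather than $\widetilde{g}$ because Lemma \ref{lem:dualvariables}(iii) gives Lipschitz continuity of $P(\cdot)^\top g(\cdot)$. Since $\norm{v_k}\to0$, for large $k$ in $\mathcal{T}_j$ we still have $\norm{\widetilde{g}_k}\geq\varepsilon'/2$. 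The decrease bound of Step one therefore holds on $\mathcal{T}=\cup_j\mathcal{T}_j$. Corollary \ref{cor:technicalcorollary} (via Lemma \ref{lem:technicallemma1}) gives $\sigma_k\to\infty$ on $\mathcal{T}$, so the $1/\sqrt{\sigma_k}$ branch of the minimum is eventually active. Combined with Lemma \ref{lem:fullstepbound}, this yields
\begin{equation*}
\varphi(x_k,\mu_k)-\varphi(x_{k+1},\mu_k)\geq c\norm{d_k}
\end{equation*}
for some $c>0$ and all large $k\in\mathcal{T}$.

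Next we repeat the telescoping argument of \eqref{eq:verycool}--\eqref{eq:reallycool}, using that $\mu_k$ is eventually constant (Lemma \ref{lem:meritparameterbound}) and that $\Phi$ is convergent along successful iterates. This gives $\norm{x_{n_j}-x_{m_j}}\to0$. Lipschitz continuity of $P(\cdot)^\top g(\cdot)$ then gives $\norm{P_{n_j}^\top g_{n_j}-P_{m_j}^\top g_{m_j}}\to0$, contradicting the gap $\geq\varepsilon'$. The delicate points are passing between $\widetilde{g}_k$ and $P_k^\top g_k$ using $\norm{v_k}\to0$ from Theorem \ref{th:constraintsconvergence}, and the case with only finitely many indices $m_j$, which is handled as in that theorem.
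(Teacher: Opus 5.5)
Your proposal is correct and follows essentially the same route as the paper. It proves $\liminf\norm{\widetilde{g}_k}=0$ by contradiction via Corollary \ref{cor:technicalcorollary} and Lemma \ref{lem:technicallemma} on the unsuccessful iterations that immediately precede successful ones. It then upgrades to a full limit with the $m_j/n_j$ telescoping argument on the Lipschitz map $P(\cdot)^\top g(\cdot)$, using $\norm{v_k}\to0$ from Theorem \ref{th:constraintsconvergence} to move between $\widetilde{g}_k$, $P_k^\top g_k$ and $\nabla_x\mathcal{L}(x_k,\lambda_k)$; the only cosmetic difference is that you reduce to $\norm{P_k^\top g_k}\to0$ up front, whereas the paper does this at the end via \eqref{eq:gradientdiff}.
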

\begin{proof}
    We first show that $\liminf\limits_{k\to\infty}\norm{\widetilde{g}_k}=0$.  Theorem \ref{th:constraintsconvergence} and Lemma \ref{lem:normalstepbound} will then imply $\liminf\limits_{k\to\infty}\norm{P_k^\top g_k}=0$. Suppose that this is not true. Then, there exists some $\varepsilon>0$ such that $\norm{\widetilde{g}_k}\geq\varepsilon$ for all $k\geq0$. Hence, by Lemma \ref{lem:deltagkbound} we have
    \begin{equation*}
        \Delta q_k\geq \frac{3}{10}\varepsilon^2\min\left\{ \frac{1}{1+\kappa_H},\frac{1}{\sqrt{\kappa_{\widetilde{g}}}\sqrt{\sigma_k}} \right\}\;\;\forall k\geq0.
    \end{equation*}
 Corollary \ref{cor:technicalcorollary} indicates that not all $k$ sufficiently large are in $\mathcal{S}$. By following an argument identical to the proof of Theorem \ref{th:constraintsconvergence}, we deduce that all $k\geq0$ sufficiently large do belong to $\mathcal{S}$, which is a contradiction.\par

Therefore, $\liminf\limits_{k\to\infty}\norm{P_k^\top g_k}=0$. To show the stronger convergence result, we use uniform continuity of the function $P(\cdot)^\top g(\cdot)$ over $\mathcal{X}$, the latter of which follows from Lemma \ref{lem:dualvariables}. Suppose that $\lim\limits_{k\to\infty}\norm{P_k^\top g_k}\neq 0$. Then, similarly to Theorem 3.9, as $\mathcal{S}$ is infinite without loss of generality (due to Lemma \ref{lem:finitecase}), there exists $\varepsilon'>0$ and an infinite subsequence $\{m_j\}\subset\mathcal{S}$ such that 
\begin{equation}
    \norm{P_{m_j}^\top g_{m_j}}>2\varepsilon'\quad \forall j\geq 0.
\end{equation}
In similar fashion to the previous theorem, due to the fact that $\liminf\limits_{k\to\infty}\norm{P_k^\top g_k}=0$, for that $\varepsilon'$ there exists, for every $j$, a successful iterate $n_j>m_j$ that satisfies $\norm{P_{n_j}^\top g_{n_j}}<\varepsilon'$ and $\norm{P_k^\top g_k}\geq\varepsilon'$ for all iterates $m_j\leq k<n_j$. We then define the index sets $\mathcal{R}_j:=\{k\in\mathcal{S}:m_j\leq k<n_j\}$.
In addition, by Theorem \ref{th:constraintsconvergence}, for that $\varepsilon'>0$ there exists an index $j^*$ such that $\norm{c_{k}}\leq \frac{\varepsilon'}{2\kappa_H\kappa_v}$ for all $m_j\leq k<n_j$ for every $j\geq j^*$. Hence, by the triangle inequality and the bounds of \hyperlink{f1}{(F1)}-\hyperlink{f3}{(F3)} we obtain
\begin{equation}
    \norm{\widetilde{g}_k}\geq\varepsilon'/2\quad \forall m_j\leq k<n_j,\;\; \forall j\geq j^*.
\end{equation}
Lemma \ref{lem:deltagkbound} and the construction of the algorithm guarantee the bound
\begin{equation}
    \varphi(x_k,\mu_k)-\varphi(x_{k+1},\mu_k)\geq\frac{3}{40} \eta_1 (\varepsilon')^2 \min\left\{ \frac{1}{1+\kappa_H},\frac{1}{\sqrt{\kappa_{\widetilde{g}}}\sqrt{\sigma_k}} \right\}\quad\forall k\in\mathcal{R}_j,\;\;\forall j\geq j^*.
\end{equation}
By following exactly the same steps as we did in the proof of Theorem \ref{th:constraintsconvergence}, and by invoking Lemma \ref{lem:technicallemma1} (Corollary \ref{cor:technicalcorollary}), we deduce that $\norm{x_{m_j}-x_{n_j}}\to 0$ as $j\to\infty$. Since $P(\cdot)^\top g(\cdot)$ is uniformly continuous on the space of all iterates, it follows that $\norm{P_{m_j}^\top g_{m_j}-P_{n_j}^\top g_{n_j}}\to 0$ as $j\to\infty$. This is a contradiction, however, as $\norm{P_{m_j}^\top g_{m_j}-P_{n_j}^\top g_{n_j}}\geq \norm{P_{m_j}^\top g_{m_j}}-\norm{P_{n_j}^\top g_{n_j}}\geq \varepsilon'$ for all $j\geq 0$.\par
Hence, Lemma \ref{lem:dualvariables} yields $\lim\limits_{k\to\infty}\norm{\nabla_x\mathcal{L}(x_k,\lambda_k^*)}=0$. The error bound \eqref{eq:lambdas} and  Lemmas \ref{lem:normalstepbound} and \ref{lem:dualbounds} give
\begin{equation}\label{eq:gradientdiff}
    \norm{\nabla_x\mathcal{L}(x_k,\lambda_k^*)-\nabla_x\mathcal{L}(x_k,\lambda_k)}\leq\kappa_A\norm{\lambda_k^*-\lambda_k}\leq \frac{\kappa_A r_{\lambda}\kappa_v}{\gamma_A^2}\norm{c_k}_1.
\end{equation}
The result follows from \eqref{eq:gradientdiff} and Theorem \ref{th:constraintsconvergence}.
\end{proof}

\subsection{First-order Complexity}\label{sec:Sec3.2}

Next, we prove complexity bounds of order $\mathcal{O}\left(\max\left\{\epsilon^{-2}_g,\epsilon^{-1}_c\right\}\right)$ to first-order stationary points. This is known to be optimal in the unconstrained case \cite{nesterov2006cubic}, and matches the best bound in the equality constrained setting \cite{berahas2025sequential, curtis2024worst}, when only first-order derivative information is available.\par

We begin with a bound on the maximum number of unsuccessful iterations of the algorithm in relation to that of successful iterations. To this end, for every $k\geq 0$, we define the sets
\begin{subequations}
    \begin{align}
        \mathcal{U}_k&:=\{j\leq k:j\in\mathcal{U}\};\\
        \mathcal{S}_k&:=\{j\leq k:j\in\mathcal{S}\}. 
    \end{align}
\end{subequations}

\begin{lemma}[Theorem 2.1 \cite{cartis2011adaptive2}]\label{lem:cardinalitybound}
    If $\sigma_j\leq \widetilde{\sigma}$ $\forall j\leq k$ for some $\widetilde{\sigma}>0$, then
    \begin{equation*}
       |\mathcal{U}_k|\leq\ceil[\Bigg]{\displaystyle\frac{\log\widetilde{\sigma}-\log\sigma_{0}-|\mathcal{S}_k|\log\gamma_3}{\log\gamma_1}}. 
    \end{equation*}
\end{lemma}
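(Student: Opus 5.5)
The bound follows by tracking $\log\sigma_j$ through the update rule of Algorithm \ref{alg:scp}, exactly as in the standard ARC argument. The idea is that each unsuccessful iteration raises $\log\sigma$ by at least $\log\gamma_1$. Each successful iteration lowers it by at most $|\log\gamma_3|$, since $\sigma_{j+1}\geq\gamma_3\sigma_j$ in the very successful case and $\sigma_{j+1}=\sigma_j$ in the merely successful case. The total drift is capped because $\sigma_{k}\leq\widetilde{\sigma}$.

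\textbf{Per-iteration inequalities.} From the update rule, for every $j\le k-1$:
\begin{itemize}
\item if $j\in\mathcal{U}$, then $\sigma_{j+1}\geq\gamma_1\sigma_j$;
\item if $j\in\mathcal{S}$, then $\sigma_{j+1}\geq\gamma_3\sigma_j$. This uses $\max\{\sigma_{\min},\gamma_3\sigma_j\}\geq\gamma_3\sigma_j$, and also covers the successful case because $\gamma_3\le 1$.
\end{itemize}
Iterations where a second-order correction is attempted are still classified as either (very) successful or unsuccessful via $\rho_k^{corr}$, so they obey the same two rules and need no separate treatment.

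\textbf{Chaining.} Multiplying these inequalities over $j=0,\dots,k-1$ (or over $j\le k$, with $\sigma_{k+1}$ on the left, which the hypothesis does not bound, so I would index carefully) gives
\[
\sigma_k\;\geq\;\sigma_0\,\gamma_1^{|\mathcal{U}_{k-1}|}\,\gamma_3^{|\mathcal{S}_{k-1}|}.
\]
Combining with $\sigma_k\le\widetilde{\sigma}$ and taking logarithms yields
\[
|\mathcal{U}_{k-1}|\log\gamma_1\;\le\;\log\widetilde{\sigma}-\log\sigma_0-|\mathcal{S}_{k-1}|\log\gamma_3 .
\]
Since $\log\gamma_3\le 0$, the term $-|\mathcal{S}_{k-1}|\log\gamma_3$ is nonnegative and is at most $-|\mathcal{S}_k|\log\gamma_3$, so replacing $\mathcal{S}_{k-1}$ by $\mathcal{S}_k$ only loosens the bound.

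\textbf{Main obstacle: the off-by-one at index $k$.} The delicate step is passing from $\mathcal{U}_{k-1}$ to $\mathcal{U}_k$, which counts one more iteration.
\begin{itemize}
\item If $k\notin\mathcal{U}$, then $|\mathcal{U}_k|=|\mathcal{U}_{k-1}|$, and the bound follows directly, a fortiori with the ceiling.
\item If $k\in\mathcal{U}$, the displayed inequality alone gives $|\mathcal{U}_k|\le 1+(\cdots)/\log\gamma_1$, which the ceiling absorbs only when the fraction is not already an integer.
\end{itemize}
To close the second case I would do what the cited reference does. Either use the convention that the hypothesis $\sigma_j\le\widetilde{\sigma}$ effectively extends to $\sigma_{k+1}$, in which case the chain runs over $j\le k$ and the ceiling is not even needed. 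Or note that the ceiling is exactly what is needed to absorb the final unsuccessful step, with $\log\gamma_1>0$ making the division legitimate. Since the statement is quoted verbatim from \cite[Theorem 2.1]{cartis2011adaptive2}, with only the very-successful lower factor $\gamma_3$ used, I would present the chain above and cite that theorem for this indexing convention.
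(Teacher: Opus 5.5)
Your chaining argument is the standard one behind the cited Theorem 2.1. Your per-iteration inequalities are also correct for Algorithm~\ref{alg:scp}, including iterations with a correction step.

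\textbf{The gap.} The gap is the case you defer: $k\in\mathcal{U}$ with $x:=(\log\widetilde{\sigma}-\log\sigma_0-|\mathcal{S}_k|\log\gamma_3)/\log\gamma_1$ an integer. This case cannot be closed, because the statement as written is false there. So your second option, that the ceiling absorbs the last unsuccessful step, is simply wrong. There is also no slack in the $\gamma_3$ term, since $\mathcal{S}_k=\mathcal{S}_{k-1}$ when $k\in\mathcal{U}$. Concretely, take $k=0$ and $\widetilde{\sigma}=\sigma_0$, and suppose iteration $0$ is unsuccessful. The hypothesis holds and $|\mathcal{S}_0|=0$, so the right-hand side is $\lceil 0\rceil=0$, yet $|\mathcal{U}_0|=1$. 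More generally, take $\widetilde{\sigma}=\gamma_1^p\sigma_0$ with iterations $0,\dots,p$ unsuccessful and $\sigma_{j+1}=\gamma_1\sigma_j$, which the update rule allows. Then $\sigma_j\le\widetilde{\sigma}$ for all $j\le p$, but $|\mathcal{U}_p|=p+1>p=\lceil x\rceil$.

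\textbf{The repair.} Citing the reference for an ``indexing convention'' does not help. Any chain that controls $|\mathcal{U}_k|$ must multiply the per-iteration inequalities over $j=0,\dots,k$, so it bounds $\sigma_{k+1}$ and tacitly uses $\sigma_{k+1}\le\widetilde{\sigma}$. You should commit to one of two explicit fixes.
\begin{itemize}
\item Strengthen the hypothesis to $\sigma_j\le\widetilde{\sigma}$ for all $j\le k+1$. Then $\sigma_0\gamma_1^{|\mathcal{U}_k|}\gamma_3^{|\mathcal{S}_k|}\le\sigma_{k+1}\le\widetilde{\sigma}$ gives $|\mathcal{U}_k|\le x$ outright, with no ceiling needed.
\item Keep the hypothesis and conclude $|\mathcal{U}_k|\le x+1$. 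This is what your argument actually proves, since $|\mathcal{U}_{k-1}|\le x$ and $|\mathcal{U}_k|\le|\mathcal{U}_{k-1}|+1$.
\end{itemize}
Either version suffices for how the lemma is used in the paper:
\begin{itemize}
\item Lemma~\ref{lem:sigmabound} bounds $\sigma_j$ for every $j$.
\item The very-successful threshold arguments of Lemmas~\ref{lem:fosplemma1}--\ref{lem:fosplemma2} bound $\sigma_{k+1}$ by the same constant as $\sigma_k$, for $k$ in the relevant index sets.
\item An additive $1$ does not change any of the complexity orders.
\end{itemize}
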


We require two technical lemmas which guarantee that the sequence of cubic regularization parameters is well-controlled over certain iteration subsets of $\mathcal{X}$. These subsets will then be chosen in an appropriate manner so that they include all iterates that land away from first-order stationary points.

\begin{lemma}\label{lem:fosplemma1}
    Suppose that \hyperlink{f1}{(F1)}-\hyperlink{f3}{(F3)} hold. Let $\epsilon\in(0,1)$, $\mathcal{I}_\epsilon$ be some index set and $\tilde{\delta} := \min\left\{1, \sqrt{\frac{1-\eta_2}{10\kappa_v\kappa_0}}\right\}$. If $\norm{\widetilde{g_k}}>\epsilon$ and $\tilde{\delta} \epsilon \geq \sqrt{\norm{c_k}_1}$ for every $k\in\mathcal{I}_\epsilon$, then
    \begin{equation*}
        \sigma_k\leq \max\left\{ \sigma_0,\gamma_2\frac{\kappa_{cg}^2}{\epsilon} \right\}
    \end{equation*}
    for every $k\in\mathcal{I}_\epsilon$, where
    \begin{equation*}
        \kappa_{cg}:=\max\left\{\frac{45 \kappa_0}{1-\eta_2},1+\kappa_H \right\}.
    \end{equation*}
\end{lemma}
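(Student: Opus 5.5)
The plan is the standard ARC-style argument. I would first show that whenever $k\in\mathcal{I}_\epsilon$ and $\sigma_k$ is large, namely $\sigma_k\geq \kappa_{cg}^2/\epsilon$, iteration $k$ is very successful, so $\sigma_{k+1}\leq\sigma_k$. Then an induction on the iteration count gives the stated bound. The reason is that $\sigma$ can only grow past $\kappa_{cg}^2/\epsilon$ from an unsuccessful iteration at which $\sigma_k<\kappa_{cg}^2/\epsilon$, and such a step multiplies $\sigma$ by at most $\gamma_2$. This yields $\sigma_k\leq\max\{\sigma_0,\gamma_2\kappa_{cg}^2/\epsilon\}$. (Strictly, the induction must be run over consecutive iterations. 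I would read $\mathcal{I}_\epsilon$ as a run of consecutive indices, or note that outside $\mathcal{I}_\epsilon$ the parameter is only controlled through its entry value. The intended use is with $\mathcal{I}_\epsilon$ being all iterations before termination.)

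For the key step, fix $k\in\mathcal{I}_\epsilon$ with $\sigma_k\geq\kappa_{cg}^2/\epsilon$. I would bound $1-\rho_k\leq \kappa_0\norm{d_k}^2/\Delta q_k$ using Lemma \ref{lem:ratiobound}. This requires a lower bound on $\Delta q_k$ and an upper bound on $\norm{d_k}^2$.

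\emph{Lower bound.} Lemma \ref{lem:reduction} gives $\Delta q_k\geq\Delta m_k^U(u_k)$. Lemma \ref{lem:deltagkbound} with $\norm{\widetilde g_k}>\epsilon$ then gives
\begin{equation*}
\Delta q_k\geq\tfrac{3}{10}\,\epsilon\min\left\{\frac{\epsilon}{1+\kappa_H},\sqrt{\epsilon/\sigma_k}\right\}.
\end{equation*}
Since $\sigma_k\geq(1+\kappa_H)^2/\epsilon$ (because $\kappa_{cg}\geq1+\kappa_H$), the minimum is attained by the second term. Hence $\Delta q_k\geq \tfrac{3}{10}\epsilon^{3/2}\sigma_k^{-1/2}$.

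\emph{Upper bound.} I split $\norm{d_k}^2\leq 2\norm{u_k}^2+2\norm{v_k}^2$. For the tangential part, I would not use Lemma \ref{lem:tangentialstepbound} directly, since $\norm{\widetilde g_k}$ may be large. Instead I would use Lemma \ref{lem:fullstepbound} in the form $\norm{u_k}^2\lesssim \kappa_d^2/\sigma_k$. This gives a contribution of order $\sigma_k^{-1}$, and dividing by $\epsilon^{3/2}\sigma_k^{-1/2}$ yields $\sigma_k^{-1/2}\epsilon^{-3/2}$. That is small only if $\sigma_k\gtrsim\epsilon^{-3}$, which is worse than the claimed $\epsilon^{-1}$. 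So this step needs care.

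\emph{Main obstacle.} The better route is to use $\norm{u_k}\leq 3\max\{\kappa_H/\sigma_k,\sqrt{\norm{\widetilde g_k}/\sigma_k}\}$ and compare against the sharper decrease $\Delta q_k\geq\frac{3}{10}\norm{\widetilde g_k}^{3/2}\sigma_k^{-1/2}$, which holds in the same regime. When $\sigma_k\geq (1+\kappa_H)^2/\epsilon$, both quantities scale with $\norm{\widetilde g_k}$. The ratio $\kappa_0\norm{u_k}^2/\Delta q_k$ is then at most
\begin{equation*}
\frac{30\kappa_0\norm{\widetilde g_k}/\sigma_k}{\norm{\widetilde g_k}^{3/2}\sigma_k^{-1/2}}=\frac{30\kappa_0}{\sqrt{\sigma_k\norm{\widetilde g_k}}}\leq\frac{30\kappa_0}{\sqrt{\sigma_k\epsilon}}.
\end{equation*}
The $\kappa_H/\sigma_k$ branch is dominated because $\sigma_k\epsilon\geq\kappa_H^2$. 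This ratio is at most $(1-\eta_2)\cdot\frac{2}{3}$ once $\sqrt{\sigma_k\epsilon}\geq 45\kappa_0/(1-\eta_2)$, which explains the constant $45$ in $\kappa_{cg}$.

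\emph{Normal part.} Lemma \ref{lem:normalstepbound} and the hypothesis give $\norm{v_k}^2\leq\kappa_v\cdot$ (a quantity controlled by $\norm{c_k}_1\leq\tilde\delta^2\epsilon^2$). More carefully, I would write $\norm{v_k}^2\leq \norm{v_k}\cdot\kappa_v\norm{c_k}_1$ together with $\norm{v_k}\leq\sigma_k^{-1/2}$. Dividing by $\Delta q_k\geq\frac{3}{10}\epsilon^{3/2}\sigma_k^{-1/2}$ gives a bound of order $\kappa_0\kappa_v\tilde\delta^2\epsilon^{1/2}\leq(1-\eta_2)/3$ by the choice of $\tilde\delta$, using $\epsilon<1$.

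Adding the two pieces gives $1-\rho_k<1-\eta_2$, so iteration $k$ is very successful. The delicate bookkeeping is matching the $\tfrac{3}{10}$, the factor $2$ from squaring, and the $10$ in $\tilde\delta$. I expect the tangential estimate, which requires the $\norm{\widetilde g_k}$-scaled comparison rather than the crude Lemma \ref{lem:fullstepbound}, to be the crux.
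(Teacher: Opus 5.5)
Your proposal is correct and is essentially the paper's proof. It uses the same threshold, namely that $\sigma_k\norm{\widetilde{g}_k}>\kappa_{cg}^2$ forces a very successful iteration. It uses the same $\norm{\widetilde{g}_k}$-scaled comparison of Lemma~\ref{lem:tangentialstepbound} against Lemma~\ref{lem:deltagkbound}, which gives the $45$. It uses the same normal-step estimate $\norm{v_k}^2\leq\kappa_v\norm{c_k}_1/\sqrt{\sigma_k}\leq\kappa_v\tilde{\delta}^2\epsilon^2/\sqrt{\sigma_k}$, which gives the $10$. The $(1-\eta_2)$ budget is split $\tfrac{2}{3}+\tfrac{1}{3}$ between the two pieces, as in the paper. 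Your caveat about running the induction over consecutive iterations is well taken; the paper's proof does not address it.

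There is one fix. Do not split $\norm{d_k}^2\leq 2\norm{u_k}^2+2\norm{v_k}^2$, because with that factor $2$ the stated constants no longer close. Instead use $u_k\in\mathcal{N}(A_k)$ and $v_k\in\mathcal{R}(A_k^\top)$ to get $\norm{d_k}^2=\norm{u_k}^2+\norm{v_k}^2$. The paper uses this implicitly, and your final $\tfrac{2}{3}+\tfrac{1}{3}$ addition already assumes it.
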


\begin{proof}
    By the construction of the algorithm, it suffices to show that if $\sqrt{\sigma_k\norm{\widetilde{g_k}}}> \kappa_{cg}$, then $k$ is very successful. To this end, suppose that $\sqrt{\sigma_k\norm{\widetilde{g_k}}}> \kappa_{cg}$ for every $k\in\mathcal{I}_\epsilon$ and consider the ratio $\rho_k$. We will show that $\rho_k>\eta_2$, or equivalently, that
    \begin{equation}\label{eq:bad1}
        \varphi(x_{k+1},\mu_k)-q_k(d_k)+(1-\eta_2)(q_k(d_k)-\varphi(x_k,\mu_k))<0
    \end{equation}
for every $k\in\mathcal{I}_\epsilon$.  First, we have $q_k(d_k)-\varphi(x_k,\mu_k)=-\Delta q_k\leq -\Delta m_k^U$. Now, the bound $\kappa_{cg}\geq 1+\kappa_H$ and Lemma \ref{lem:deltagkbound} imply that $\Delta m_k^U\geq\frac{3\norm{\widetilde{g}_k}}{10}\sqrt{\frac{\norm{\widetilde{g}_k}}{\sigma_k}}$ for every $k\in\mathcal{I}_\epsilon$, which yields
\begin{equation}\label{eq:bad2}
    q_k(d_k)-\varphi(x_k,\mu_k)\leq -\frac{3\norm{\widetilde{g_k}}}{10}\sqrt{\frac{\norm{\widetilde{g_k}}}{\sigma_k}}.
\end{equation}  
From Lemma \ref{lem:ratiobound} we have that
\begin{equation}\label{eq:good1}
    \varphi(x_{k+1},\mu_k)-q_k(d_k)\leq \kappa_0\norm{d_k}^2.
\end{equation}
Moreover, the bound $\kappa_{cg}> \kappa_H$ and Lemma \ref{lem:tangentialstepbound} give $\norm{u_k}^2\leq 9 \frac{\norm{\widetilde{g}_k}}{\sigma_k}$. In addition, we have
\begin{equation*}
    \norm{v_k}^2\leq \frac{\beta_k\norm{v_k^c}}{\sqrt{\sigma_k}}\leq \frac{\kappa_v \|c_k\|_1}{\sqrt{\sigma_k}}\leq \frac{\kappa_v \tilde{\delta}^2 \epsilon^2}{\sqrt{\sigma_k}} < \kappa_v \tilde{\delta}^2 \norm{\widetilde{g}_k} \sqrt{\frac{\norm{\widetilde{g}_k}}{\sigma_k}} \leq \frac{(1-\eta_2)\norm{\widetilde{g}_k}}{10\kappa_0} \sqrt{\frac{\norm{\widetilde{g}_k}}{\sigma_k}},
\end{equation*}
where we used $\epsilon < 1$ in the second to last inequality and $\tilde{\delta}^2 \leq (1-\eta_2)/(10 \kappa_v\kappa_0)$, in the final inequality. Therefore, we deduce that the left-hand side of \eqref{eq:bad1} is upper bounded by
\begin{equation}
    \frac{\norm{\widetilde{g_k}}}{\sigma_k}\left[9\kappa_0-\frac{(1-\eta_2)}{5}\sqrt{\sigma_k\norm{\widetilde{g_k}}} \right],
\end{equation}
and thus, when
\begin{equation*}
    \sqrt{\sigma_k \|\widetilde{g}_k\|} \geq \frac{45 \kappa_0}{1-\eta_2},
\end{equation*}
\eqref{eq:bad1} holds. Consequently, if $0\not\in\mathcal{I}_\epsilon$, the desired bound follows. If $0\in\mathcal{I}_\epsilon$ and $\sigma_0\leq\gamma_2\frac{\kappa_{cg}^2}{\epsilon}$, then $\sigma_k\leq \max\left\{ \sigma_0,\gamma_2\frac{\kappa_{cg}^2}{\epsilon} \right\}$ for every $k\in\mathcal{I}_\epsilon$. Lastly, suppose $\sigma_0\geq\gamma_2\frac{\kappa_{cg}^2}{\epsilon}$. The result once again follows by the construction of the algorithm, as $\gamma_2>1$. 
\end{proof}

\begin{lemma}\label{lem:fosplemma2}
    Suppose that \hyperlink{f1}{(F1)}-\hyperlink{f3}{(F3)} hold. Also let $\epsilon\in(0,1)$ and $\mathcal{J}_\epsilon$ be some index set. If $\norm{c_k}_1>\epsilon$ for every $k\in\mathcal{J}_\epsilon$, then
    \begin{equation*}
        \sigma_k\leq \max\left\{ \sigma_0,\gamma_2\frac{\kappa_{cd}^2}{\epsilon^2} \right\}
    \end{equation*}
    for every $k\in\mathcal{J}_\epsilon$, where
    \begin{equation*}
        \kappa_{cd}:=\max\left\{ \frac{\theta}{\kappa_c \kappa_v}, \frac{\kappa_0\kappa_c \kappa_{v}(9\kappa_{\widetilde{g}}+1)}{\tau\mu_{-1}\theta(1-\eta_2)},\frac{\kappa_H}{\sqrt{\kappa_{\widetilde{g}}}} \right\}.
    \end{equation*}
\end{lemma}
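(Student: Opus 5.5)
We mirror the proof of Lemma \ref{lem:fosplemma1}. By the construction of the algorithm ($\sigma$ grows by at most $\gamma_2$ per unsuccessful iteration and never grows otherwise), it suffices to show that every $k\in\mathcal{J}_\epsilon$ with $\sqrt{\sigma_k}\,\epsilon>\kappa_{cd}$, i.e.\ $\sigma_k>\kappa_{cd}^2/\epsilon^2$, is very successful. The cap then follows as before: either $\sigma_0$ already exceeds the threshold, or $\sigma_k$ can exceed $\kappa_{cd}^2/\epsilon^2$ only through a single unsuccessful step from below it, giving the factor $\gamma_2$. We then need to verify \eqref{eq:bad1}, namely
\[
\varphi(x_{k+1},\mu_k)-q_k(d_k)<(1-\eta_2)\Delta q_k .
\]

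\textbf{Lower bound on $\Delta q_k$.} By Lemma \ref{lem:reduction}, $\Delta q_k\geq \tau\mu_k\beta_k\norm{c_k}_1\geq \tau\mu_{-1}\beta_k\epsilon$. For $\beta_k$ we use \eqref{eq:beta} together with Lemma \ref{lem:normalstepbound}: $\beta_k\geq\min\{1,\theta/(\norm{v_k^c}\sqrt{\sigma_k})\}$ and $\norm{v_k^c}\leq\kappa_v\norm{c_k}_1\leq \kappa_v\kappa_c$. The first entry $\theta/(\kappa_c\kappa_v)$ of $\kappa_{cd}$ is designed so that $\sqrt{\sigma_k}\geq \kappa_{cd}/\epsilon\geq \theta/(\kappa_c\kappa_v)$ (using $\epsilon<1$). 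This forces the minimum to be attained by the second term, so that
\[
\Delta q_k\geq \frac{\tau\mu_{-1}\theta\,\epsilon}{\kappa_v\kappa_c\sqrt{\sigma_k}}.
\]

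\textbf{Upper bound on the model error.} By Lemma \ref{lem:ratiobound}, the left side is at most $\kappa_0\norm{d_k}^2\leq 2\kappa_0(\norm{u_k}^2+\norm{v_k}^2)$. Since $\sqrt{\sigma_k}\geq\kappa_H/\sqrt{\kappa_{\widetilde g}}$ (the third entry of $\kappa_{cd}$), we have $\kappa_H/\sigma_k\leq\sqrt{\kappa_{\widetilde g}/\sigma_k}$. Lemma \ref{lem:tangentialstepbound} then gives $\norm{u_k}^2\leq 9\kappa_{\widetilde g}/\sigma_k$, and $\norm{v_k}^2\leq 1/\sigma_k$ by \eqref{eq:beta}. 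Altogether the error is bounded by $\kappa_0(9\kappa_{\widetilde g}+1)/\sigma_k$, up to the factor from $(a+b)^2$; alternatively, use $\norm{d_k}\leq\norm{u_k}+\norm{v_k}$ and absorb constants as the paper's constant suggests.

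\textbf{Comparison.} Very success then holds whenever
\[
\frac{\kappa_0(9\kappa_{\widetilde g}+1)}{\sigma_k}<(1-\eta_2)\frac{\tau\mu_{-1}\theta\epsilon}{\kappa_v\kappa_c\sqrt{\sigma_k}},
\]
i.e.\ whenever $\sqrt{\sigma_k}\,\epsilon>\kappa_0\kappa_c\kappa_v(9\kappa_{\widetilde g}+1)/(\tau\mu_{-1}\theta(1-\eta_2))$, which is the second entry of $\kappa_{cd}$.

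The main obstacle is bookkeeping of constants. In particular, the factor $2$ from $\norm{u_k+v_k}^2$ must either be absorbed, for instance by strict versus non-strict inequalities or a slightly different splitting, or the constant enlarged. We also need to handle $\beta_k$ carefully: since $\beta_k$ may equal $1$ when $\norm{v_k^c}\sqrt{\sigma_k}\le\theta$, the bound $\Delta q_k\ge\tau\mu_{-1}\min\{\epsilon,\theta\epsilon/(\kappa_v\kappa_c\sqrt{\sigma_k})\}$ should be used, and the first entry of $\kappa_{cd}$ reduces it to the second branch. If the constant factor of $2$ does not close, I would simply accept a modified constant, since only the $\epsilon^{-2}$ scaling matters for the complexity bound.
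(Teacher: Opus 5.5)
Your argument is essentially the paper's proof, step for step. Both reduce to showing that $k\in\mathcal{J}_\epsilon$ with $\sigma_k>\kappa_{cd}^2/\epsilon^2$ is very successful. Both use the lower bound $\Delta q_k\geq\tau\mu_{-1}\min\{1,\theta/(\kappa_v\kappa_c\sqrt{\sigma_k})\}\norm{c_k}_1$, with the first entry of $\kappa_{cd}$ selecting the second branch. Both use $\norm{u_k}^2\leq 9\kappa_{\widetilde{g}}/\sigma_k$ (via the third entry) and $\norm{v_k}^2\leq 1/\sigma_k$, and the final comparison produces the second entry.

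The one step you leave open is not a bookkeeping problem, and it should not be patched by changing constants. By construction $v_k=\beta_k v_k^c\in\mathcal{R}(A_k^\top)$, while the Oracle returns $u_k\in\mathcal{R}(P_k)=\mathcal{N}(A_k)=\mathcal{R}(A_k^\top)^\perp$. Hence $u_k\perp v_k$ and $\norm{d_k}^2=\norm{u_k}^2+\norm{v_k}^2$ exactly. Lemma~\ref{lem:ratiobound} then gives $\varphi(x_k+d_k,\mu_k)-q_k(d_k)\leq\kappa_0(9\kappa_{\widetilde{g}}+1)/\sigma_k$ with no factor $2$. This is exactly the bound the paper uses, implicitly, to control the left-hand side of \eqref{eq:bad1}.

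This matters for the statement as written. Your fallbacks are $\norm{d_k}^2\leq 2\norm{u_k}^2+2\norm{v_k}^2$ and $(\norm{u_k}+\norm{v_k})^2\leq(3\sqrt{\kappa_{\widetilde{g}}}+1)^2/\sigma_k$. With either one, the comparison closes only if the second entry of $\kappa_{cd}$ is enlarged, so you would prove a weaker lemma than the one stated. With Pythagoras, the stated constant works exactly.

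One further point, which your proposal shares with the paper's own proof, concerns the step from ``very successful whenever $\sigma_k>\kappa_{cd}^2/\epsilon^2$'' to the cap on $\sigma_k$. That step applies the threshold argument at the unsuccessful iteration $i<k$ that last increased $\sigma$. For an arbitrary index set $\mathcal{J}_\epsilon$, nothing guarantees $\norm{c_i}_1>\epsilon$ there: if successful iterations occur between $i$ and $k$, the iterate, and hence the constraint violation, can change. This is harmless in Theorem~\ref{th:fospcomplexity}. There, every iteration before termination satisfies the hypotheses of this lemma or of Lemma~\ref{lem:fosplemma1}, so a combined threshold applies at every such $i$.
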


\begin{proof}
     Suppose that  $\sqrt{\sigma_k}>\frac{\kappa_{cd}}{\epsilon}$ for every $k\in\mathcal{J}_\epsilon$. As in the proof of Lemma \ref{lem:fosplemma1}, it suffices to show that (\ref{eq:bad1}) holds. The relation (\ref{eq:good1}) still holds. From the bound on $\sigma_k$ and Lemma \ref{lem:tangentialstepbound} we get $\norm{u_k}^2\leq 9\frac{\kappa_{\widetilde{g}}}{\sigma_k}$. Further, we have $\norm{v_k}^2=\beta^2_k\norm{v_k^c}^2\leq\frac{1}{\sigma_k}$.  Moreover,
    \begin{equation}
        q_k(d_k)-\varphi(x_k,\mu_k)=-\Delta q_k\leq -\tau\mu_{-1}\min\left\{1,\frac{\theta}{\kappa_v \kappa_c \sqrt{\sigma_k}}\right\}\norm{c_k}_1.
    \end{equation}
Thereby, the left-hand-side of (\ref{eq:bad1}) is bounded above by the quantity
\begin{equation}\label{eq:badone}
    \frac{1}{\sigma_k}\left[ (9\kappa_{\widetilde{g}}+1)\kappa_0-\sigma_k(1-\eta_2)\tau\mu_{-1}\min\left\{1,\frac{\theta}{\kappa_v \kappa_c \sqrt{\sigma_k}}\right\}\norm{c_k}_1 \right].
\end{equation}
The bounds $\norm{c_k}_1>\epsilon$ and $\sigma_k>\frac{\kappa_{cd}^2}{\epsilon^2}$ imply that \eqref{eq:badone} is negative. The cases $0\in\mathcal{J}_\epsilon$ and $0\not\in\mathcal{J}_\epsilon$ are handled in similar manner to Lemma \ref{lem:fosplemma1}, leading to the desired bound on $\sigma_k$.
\end{proof}

We are now ready to present the first complexity result of this paper.

\begin{theorem}\label{th:fospcomplexity}
    Suppose that \hyperlink{f1}{(F1)}-\hyperlink{f4}{(F4)} hold. Then, for any $\epsilon_g,\epsilon_c\in(0,1)$, Algorithm \ref{alg:scp} will reach a point satisfying $\norm{\nabla_x\mathcal{L}(x_k,\lambda_k)}\leq\epsilon_g$ and $\norm{c_k}_1\leq\epsilon_c$ in at most
    \begin{equation*}
        M_\epsilon:=\ceil[\Big]{\kappa_q\max\left\{\epsilon_g^{-2},\epsilon_c^{-1}\right\}}
    \end{equation*}
    successful iterations, where
    \begin{equation*}
        \kappa_q:=\frac{f(x_0)-f_{low}+2\kappa_c\mu_{\max}}{\eta_1\min\{b_1,b_2\}}
    \end{equation*}
  for some positive constants $b_1,b_2$. Further, the algorithm will reach such a point in at most
  \begin{equation*}
        \widetilde{M}_\epsilon:=\ceil[\Bigg]{\frac{\widetilde{\kappa}_q}{\log\gamma_1}\max\left\{\epsilon_g^{-2},\epsilon_c^{-1}\right\}}
    \end{equation*}
    total iterations, where
    \begin{equation*}
    \widetilde{\kappa}_q:=(\log\gamma_1-\log\gamma_3)\kappa_q+\max\left\{ 1,\frac{\gamma_2\kappa_{cg}^2}{\sigma_{0}},\frac{\gamma_2\kappa_{cd}^2}{\sigma_{0}} \right\}.
    \end{equation*}
\end{theorem}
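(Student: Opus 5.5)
The plan is the usual two-part ARC counting argument. First bound the number of successful iterations before termination using a uniform lower bound on $\Delta q_k$ together with the telescoping estimate \eqref{eq:telescopic}. Then convert this into a bound on total iterations via Lemma \ref{lem:cardinalitybound}, with the cap on $\sigma_k$ supplied by Lemmas \ref{lem:fosplemma1}--\ref{lem:fosplemma2}.

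\textbf{Step 1: classify non-terminating iterates.} Suppose $k$ precedes termination. Then either $\norm{c_k}_1>\epsilon_c$, or $\norm{c_k}_1\leq\epsilon_c$ and $\norm{\nabla_x\mathcal{L}(x_k,\lambda_k)}>\epsilon_g$. In the second case, \eqref{eq:gradientdiff} gives
\[
\norm{\nabla_x\mathcal{L}(x_k,\lambda_k)}\leq \norm{P_k^\top g_k}+\tfrac{\kappa_A r_\lambda\kappa_v}{\gamma_A^2}\norm{c_k}_1 .
\]
Since $\widetilde g_k=P_k^\top g_k+P_k^\top H_kv_k$, we also have $\norm{\widetilde g_k}\geq\norm{P_k^\top g_k}-\kappa_H\kappa_v\norm{c_k}_1$. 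Hence $\norm{\widetilde g_k}\geq \epsilon_g-C\norm{c_k}_1$ for a constant $C$.

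I would split on $\norm{c_k}_1$ at the threshold $\epsilon':=\min\{\epsilon_c,\ \tilde\delta^2\epsilon_g^2/4,\ \epsilon_g/(2C)\}$, which is $\gtrsim \min\{\epsilon_c,\epsilon_g^2\}$.
\begin{itemize}
\item \emph{Case A:} $\norm{c_k}_1\leq\epsilon'$. Then $\norm{\widetilde g_k}>\epsilon_g/2$ and $\sqrt{\norm{c_k}_1}\leq\tilde\delta\,\epsilon_g/2$, so Lemma \ref{lem:fosplemma1} applies with $\epsilon=\epsilon_g/2$.
\item \emph{Case B:} $\norm{c_k}_1>\epsilon'$. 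Lemma \ref{lem:fosplemma2} applies with $\epsilon=\epsilon'$.
\end{itemize}
This split is where I expect the real difficulty: the threshold must make both lemmas' hypotheses hold simultaneously. The delicate point is that Lemma \ref{lem:fosplemma2} then gives only $\sigma_k\lesssim 1/\epsilon'^2$, which looks too weak (see Step 2).

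\textbf{Step 2: per-iteration decrease.} In Case A, Lemma \ref{lem:deltagkbound} with $\sigma_k\lesssim\epsilon_g^{-1}$ gives $\Delta q_k\geq b_1\epsilon_g^2$.

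In Case B, Lemma \ref{lem:reduction} and \eqref{eq:beta} give
\[
\Delta q_k\geq\tau\mu_{-1}\min\Bigl\{\norm{c_k}_1,\ \tfrac{\theta\norm{c_k}_1}{\norm{v_k^c}\sqrt{\sigma_k}}\Bigr\}.
\]
Using $\norm{v_k^c}\leq\kappa_v\norm{c_k}_1$ and $\sqrt{\sigma_k}\lesssim 1/\epsilon'$, the second term is at least a constant times $\epsilon'$. Hence $\Delta q_k\geq b_2\,\epsilon'$.

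If instead this only yields $\epsilon'^2$, I would refine as follows. Use $\norm{v_k^c}^2\leq\kappa_{vs}\norm{c_k}_1$ (Lemma \ref{lem:normalstepbound}) to get $\beta_k\norm{c_k}_1\gtrsim\sqrt{\norm{c_k}_1}/\sqrt{\sigma_k}$. Then re-derive the $\sigma$-cap in Lemma \ref{lem:fosplemma2} from the $\norm{d_k}^2\leq\kappa_d^2/\sigma_k$ bound, to obtain $\sigma_k\lesssim\epsilon'^{-1}$. Either way, the target is $\Delta q_k\geq\min\{b_1,b_2\}\min\{\epsilon_g^2,\epsilon_c\}$ on every successful, non-terminated iterate. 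The threshold choice is adjusted so that $\epsilon_g^2$ and $\epsilon_c$ are the only rates that appear.

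\textbf{Step 3: count successful iterations.} Sum $\varphi(x_k,\mu_k)-\varphi(x_{k+1},\mu_k)\geq\eta_1\Delta q_k$ over $k\in\mathcal{S}$ before termination. Apply \eqref{eq:telescopic} with $\varphi\geq f_{low}-\mu_{\max}\kappa_c$ and $\varphi(x_0,\mu_0)\leq f(x_0)+\mu_{\max}\kappa_c$. This bounds the total decrease by $f(x_0)-f_{low}+2\kappa_c\mu_{\max}$, giving $|\mathcal{S}_k|\leq\kappa_q\max\{\epsilon_g^{-2},\epsilon_c^{-1}\}=M_\epsilon$.

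\textbf{Step 4: count all iterations.} Every iterate before termination lies in one of the two cases. So $\sigma_j\leq\widetilde\sigma:=\max\{\sigma_0,\ \gamma_2\kappa_{cg}^2\epsilon_g^{-1},\ \gamma_2\kappa_{cd}^2\epsilon'^{-2}\}$, adjusted by constants. Lemma \ref{lem:cardinalitybound} then gives $|\mathcal{U}_k|\lesssim(\log\widetilde\sigma-\log\sigma_0-|\mathcal{S}_k|\log\gamma_3)/\log\gamma_1$. Bound $\log(\widetilde\sigma/\sigma_0)\leq\max\{1,\gamma_2\kappa_{cg}^2/\sigma_0,\gamma_2\kappa_{cd}^2/\sigma_0\}\max\{\epsilon_g^{-2},\epsilon_c^{-1}\}$ using $\log x\leq x$. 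Adding $|\mathcal{S}_k|$ then yields $\widetilde M_\epsilon$ with the stated $\widetilde\kappa_q$.
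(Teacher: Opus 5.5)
Your plan is essentially the paper's proof: $\epsilon'$ plays the role of $\widetilde{\kappa}\min\{\epsilon_g^2,\epsilon_c\}$. Case B is handled by Lemma~\ref{lem:fosplemma2} together with $\beta_k\norm{c_k}_1\geq\min\{\norm{c_k}_1,\theta/(\kappa_v\sqrt{\sigma_k})\}$, which already gives a decrease linear in $\epsilon'$ exactly as the paper obtains $b_1$ (so the fallback at the end of your Step 2 is unnecessary); Case A uses Lemmas~\ref{lem:deltagkbound} and~\ref{lem:fosplemma1}, and the counting uses \eqref{eq:telescopic} and Lemma~\ref{lem:cardinalitybound}.

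Your unease about the $\epsilon'^{-2}$ cap is justified, but it bites in Case A rather than Case B, and the paper's proof has the same problem. Lemma~\ref{lem:fosplemma1} bounds $\sigma_k$ on an index set only if iterates outside that set have not already pushed $\sigma$ past its bound, so at a Case A iterate reached after Case B iterates only your Step 4 cap of order $\epsilon'^{-2}$ is justified, and with it Lemma~\ref{lem:deltagkbound} gives a decrease of order $\epsilon_g^{3/2}\epsilon'$ rather than $\epsilon_g^2$. One way to close this is a Case B threshold of order $\epsilon'^{-1/2}$: charge $\norm{v_k}^2\leq\kappa_v\beta_k\norm{c_k}_1/\sqrt{\sigma_k}$ to the constraint decrease, and charge $\norm{u_k}^2$ to the Cauchy decrease unless $\sigma_k\norm{\widetilde{g}_k}=\mathcal{O}(1)$, in which case $\norm{u_k}^2=\mathcal{O}(\sigma_k^{-2})$. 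Separately, in Step 4 apply $\log x\leq x$ to $1/\epsilon'$ after pulling out the exponent, not to $\epsilon'^{-2}$ itself; the resulting constant differs from the stated $\widetilde{\kappa}_q$.
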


\begin{proof}
Define the set $\mathcal{S}_\epsilon:=\left\{ k\in\mathcal{S}:\norm{\nabla_x\mathcal{L}(x_k,\lambda_k)}>\epsilon_g\;\text{or}\;\norm{c_k}_1>\epsilon_c \right\}$. We also define the index sets
\begin{subequations}
    \begin{align}
\mathcal{I}_\epsilon&:=\left\{ k\geq 0:\norm{\widetilde{g_k}}>\epsilon_g/2\geq \tilde{\delta}\epsilon_g/2\geq\sqrt{\norm{c_k}_1} \right\}; \\ \mathcal{J}_\epsilon&:=\Big\{ k\geq 0:\norm{c_k}_1>\widetilde{\kappa} \min\left\{\epsilon_g^{2},\epsilon_c\right\} \Big\}, 
    \end{align}
\end{subequations}
where $\widetilde{\kappa}:=\Big[4\max\left\{\kappa_H \kappa_v,\gamma_A^{-2}\kappa_A r_{\lambda} \kappa_v,\tilde{\delta}^{-2},1\right\}\Big]^{-1}$. We consider cases about a fixed iteration $k\in\mathcal{S}_\epsilon$:\par

\underline{\textit{1st case}} $\left(k\in\mathcal{J}_\epsilon\right)$: Observe that $\widetilde{\kappa}\in (0,1)$, hence we can apply Lemma \ref{lem:fosplemma2} with $\epsilon:=\widetilde{\kappa} \min\left\{\epsilon_g^{2},\epsilon_c\right\}$ to acquire
\begin{equation}
    \Delta q_k\geq \tau \mu_k\beta_k\norm{c_k}_1 > b_1 \min\left\{\epsilon_g^{2},\epsilon_c\right\},
\end{equation}
where $b_1:=\widetilde{\kappa}\tau\mu_{-1}\min\left\{1,\frac{\theta}{\kappa_v\max\{\sqrt{\sigma_0}, \kappa_{cd}\sqrt{\gamma_2}\}}\right\}$.

\underline{\textit{2nd case}} $\left(k\not\in\mathcal{J}_\epsilon\right)$: In this case we have $\norm{\nabla_x\mathcal{L}(x_k,\lambda_k)}>\epsilon_g$. In turn, from Lemma \ref{lem:normalstepbound}
\begin{equation}
    \norm{P_k^\top H_k v_k}\leq \kappa_H \kappa_v\norm{c_k}_1\leq\kappa_H\kappa_v\widetilde{\kappa}\epsilon_g^2\leq\epsilon_g/4.
\end{equation}
Furthermore, by the triangle inequality, Lemma \ref{lem:dualvariables} and \eqref{eq:gradientdiff} we have
\begin{equation}
    \norm{\nabla_x\mathcal{L}(x_k,\lambda_k)}\leq \frac{\kappa_Ar_{\lambda}\kappa_v}{\gamma_A^2}\norm{c_k}_1+\norm{P_k^\top g_k}.
\end{equation}
The last two relations yield
\begin{align}
    \norm{\widetilde{g_k}}&\geq \norm{P_k^\top g_k}-\norm{P_k^\top H_k v_k} \nonumber \\
    &> \epsilon_g-\frac{\kappa_A r_{\lambda} \kappa_v}{\gamma_A^2} \widetilde{\kappa}\epsilon_g^2-\epsilon_g/4 \nonumber \\
    &\geq \epsilon_g/2,
\end{align}
that is, iteration $k$ lies in $\mathcal{I}_\epsilon$. From Lemmas \ref{lem:deltagkbound} and \ref{lem:fosplemma1} (for $\epsilon:=\epsilon_g/2$) we then have
\begin{equation}
    \Delta q_k\geq \Delta m_k^U> b_2 \epsilon_g^2\geq b_2 \min\left\{\epsilon_g^{2},\epsilon_c\right\},
\end{equation}
where $b_2:=3\left[ 40\max\{1+\kappa_H,\max\{\sqrt{\sigma_0},\kappa_{cg}\sqrt{\gamma_2}\}\}\right]^{-1}$.\par
As a result,
\begin{equation}\label{eq:summing}
    \varphi(x_k,\mu_k)-\varphi(x_{k+1},\mu_k) > \eta_1 \min\{b_1,b_2\}\min\left\{\epsilon_g^{2},\epsilon_c\right\}\quad \forall k\in\mathcal{S}_\epsilon.
\end{equation}
Let us write $\mathcal{S}_{\epsilon}=\{k_j:j=0,...,j_s\}$, where $k_{j_{s}}+1\leq \infty$ is the first iterate satisfying \eqref{eq:fosp}. We have
\begin{align}
    \sum_{j=0}^{j_s}\left( \varphi(x_{k_j},\mu_{k_j})-\varphi(x_{k_{j+1}},\mu_{k_j}) \right)&\leq \varphi(x_{k_0},\mu_{k_0})-\varphi\left(x_{k_{j_s}+1},\mu_{k_{j_s}+1}\right)+\kappa_c(\mu_{\max}-\mu_{k_0}) \nonumber \\
    &\leq f(x_0)-f_{low}+2\kappa_c\mu_{\max},    \label{eq:must}
\end{align}
where we invoked \eqref{eq:telescopic}, Lemma \ref{lem:meritparameterbound}, and the fact that $\varphi(x_{k_0},\mu_{k_0})\leq \varphi(x_0,\mu_{k_0})+\mu_{k_0}\norm{c_0}_1$. To verify the latter bound, suppose $0\not\in\mathcal{S}$ (otherwise $k_0=0\in\mathcal{S}_{\epsilon}$, in which case there is nothing to show). Then, by construction, $x_0=x_{k_0-1}$. Since $k_0\in\mathcal{S}$, we have $\varphi(x_0,\mu_{k_0-1})\geq \varphi(x_{k_0},\mu_{k_0-1})$, or equivalently,
\begin{equation}
    \varphi(x_{k_0},\mu_{k_0})\leq \varphi(x_0,\mu_{k_0})+\left(\norm{c_0}_1-\norm{c_{k_0}}_1\right)\left(\mu_{k_0}-\mu_{k_0-1}\right),
\end{equation}
and the bound follows by monotonicity of the sequence of merit parameters.

Thereof, by summing (\ref{eq:summing}) over all $k\in\mathcal{S}_{\epsilon}$, we deduce that $j_s<\infty$ and
\begin{equation}
    f(x_0)-f_{low}+2\kappa_c\mu_{\max}>|\mathcal{S}_\epsilon|\eta_1 \min\{b_1,b_2\}\min\left\{\epsilon_g^{2},\epsilon_c\right\}.
\end{equation}
Consequently,
\begin{equation}
    |\mathcal{S}_{\epsilon}|\leq \ceil[\Bigg]{\frac{f(x_0)-f_{low}+2\kappa_c\mu_{\max}}{\eta_1\min\{b_1,b_2\}}\max\left\{\epsilon_g^{-2},\epsilon_c^{-1}\right\}},
\end{equation}
since $|\mathcal{S}_\epsilon|$ must be an integer.\par
For the total iteration complexity bound, notice that $\norm{\nabla_x\mathcal{L}(x_{k_{j_s}+1},\lambda_{k_{j_s}+1})}\leq\epsilon_g$ and $\norm{c_{k_{j_s}+1}}_1\leq \epsilon_c$. Since $\epsilon_g,\epsilon_c\in(0,1)$, by taking cases about whether an iteration $k\leq k_{j_s}$ is in $\mathcal{J}_\epsilon$ as above, and invoking Lemmas \ref{lem:fosplemma1} and \ref{lem:fosplemma2}, we find
\begin{equation}
    \sigma_k\leq \frac{\max\left\{ \sigma_0,\gamma_2\kappa_{cg}^2,\gamma_2\kappa_{cd}^2 \right\}}{\min\left\{\epsilon_g^{2},\epsilon_c\right\}}\quad\forall k\leq k_{j_s}.
\end{equation}
Therefore, we have 
\begin{equation}
    \log\left(\frac{\sigma_k}{\sigma_0}\right)\leq \log\left(\frac{\bar{\sigma}}{\min\left\{\epsilon_g^{2},\epsilon_c\right\}}\right)\;\;\forall k\leq k_{j_s},
\end{equation}
where $\bar{\sigma}:=\max\left\{ 1,\frac{\gamma_2\kappa_{cg}^2}{\sigma_0},\frac{\gamma_2\kappa_{cd}^2}{\sigma_0} \right\}$. Since $k_{j_s}=|\mathcal{U}_{k_{j_s}}|+|\mathcal{S}_{k_{j_s}}|$ and $|\mathcal{S}_{k_{j_s}}|=|\mathcal{S}_\epsilon|$, the final iteration complexity bound follows from Lemma \ref{lem:cardinalitybound} and the inequality $\log x\leq x$ for every $x>1$.
\end{proof}

\section{Second-order stationary points}\label{sec:sec4}

In this section we prove global convergence and worst-case complexity bounds to second-order stationary points. Besides the standard \hyperlink{f1}{(F1)}-\hyperlink{f4}{(F4)}, we make an additional assumption:\hypertarget{f5}{}

\noindent\textbf{\hyperlink{f5}{(F5)}}: The functions $\nabla^2 f$ and $\nabla^2 c_i$, for all $i \in \{1,\dots,m\}$, are bounded and Lipschitz continuous over $\mathcal{X}$ with constants $\kappa_{fh},\;L_{fh}$ and $\kappa_{ch_i},\;L_{ch_i}$, respectively.

Similarly to Assumption \hyperlink{f1}{(F1)}, we denote $\kappa_{ch} := \sum_{i=1}^m \kappa_{ch_i}$ and $L_{ch} := \sum_{i=1}^m L_{ch_i}$. This assumption simply translates to second-order information of the objective and the constraints, the former of which is needed for optimal complexity guarantees in the unconstrained non-convex optimization setting \cite{nesterov2006cubic}. A significant observation is that \hyperlink{f3}{(F3)} becomes redundant under \hyperlink{f5}{(F5)}, due to the definition of the Hessian (i.e., \eqref{eq:Hessian} and \eqref{eq:lambdas}), and assumptions \hyperlink{f1}{(F1)} and \hyperlink{f2}{(F2)} (Lemma \ref{lem:dualbounds}).

\subsection{Second-order Convergence}\label{sec:sec4.1}

We first state the main global convergence result of this paper:

\begin{theorem}\label{th:secondorderconvergence}
    Suppose that \hyperlink{f1}{(F1)}-\hyperlink{f5}{(F5)} hold. Then $\lim\limits_{k\to\infty}\norm{\nabla_x\mathcal{L}(x_k,\lambda_k)}=0$, $\lim\limits_{k\to\infty}\norm{c_k}_1=0$ and $\liminf\limits_{k\to\infty}\lambda_{\min}(Z_k^\top H_k Z_k)\geq0$.
\end{theorem}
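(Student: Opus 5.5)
The first two limits should follow from the arguments of Section \ref{sec:sec3}, provided those arguments are checked to survive the second-order correction steps, which were explicitly switched off there. I would revisit Lemma \ref{lem:technicallemma1}, Corollary \ref{cor:technicalcorollary}, Lemma \ref{lem:finitecase} and Theorems \ref{th:constraintsconvergence}, \ref{th:firstorderconvergence1}, and observe that on a corrected successful iteration $\rho_k^{corr}\geq\eta_1$ plays exactly the role of $\rho_k\geq\eta_1$. Hence the telescoping bound \eqref{eq:telescopic} and the monotonicity of $\Phi$ over $\mathcal{S}$ are unchanged.

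The only new ingredient is in the uniform continuity step, which uses $\norm{x_{k+1}-x_k}\leq\norm{d_k}$. There I would use $\norm{x_{k+1}-x_k}\leq\norm{d_k}+\norm{w_k}$ and bound $\norm{w_k}\leq\gamma_A^{-1}\big(\norm{c(x_k+d_k)}+r_w\norm{d_k}^3\big)$. Since $k\in\mathcal{K}$, Taylor's theorem and \eqref{eq:normalstep3} give $\norm{c(x_k+d_k)}\leq (1-\beta_k+r_v)\norm{c_k}_1+\tfrac{L_A}{2}\norm{d_k}^2$. On the relevant iterations the decrease $\Delta q_k$ still dominates a constant multiple of $\norm{d_k}$ (up to terms vanishing as $c_k\to0$), so $\norm{x_{m_j}-x_{n_j}}\to0$ goes through as before.

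For the curvature statement I argue by contradiction. Suppose there are $\epsilon>0$ and $k_0$ with $\lambda_{\min}(Z_k^\top H_kZ_k)\leq-\epsilon$ for all $k\geq k_0$; the general liminf case will reduce to this via the same ``unsuccessful predecessor'' subsequence trick used in Theorem \ref{th:constraintsconvergence}. By Lemma \ref{lem:dualvariables}(ii) and \hyperlink{or3}{(OR3)}, $\sigma_k\norm{u_k}\geq\epsilon$. Next I would import the standard ARC estimate \cite[Lemma 3.3]{cartis2011adaptive}, adapted to \hyperlink{or2}{(OR2)} together with $\delta<1/6$, namely $\Delta m_k^U(u_k)\geq c_1\sigma_k\norm{u_k}^3$. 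This requires checking the identity $\nabla m_k^U(u_k)^\top u_k=\widetilde g_k^\top u_k+u_k^\top\widetilde H_ku_k+\sigma_k\norm{u_k}^3$ and using (OR2)/(OR3) to absorb the residual. Combined with Lemma \ref{lem:reduction}, this gives
\begin{equation*}
\Delta q_k\geq c_1\epsilon^3\sigma_k^{-2}+\tau\mu_{-1}\beta_k\norm{c_k}_1 .
\end{equation*}
As in Lemma \ref{lem:technicallemma}, it then suffices to show that along any subsequence with $\sigma_k\to\infty$ the iterations are eventually very successful. I split into two cases according to membership in $\mathcal{K}$.

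If $k\notin\mathcal{K}$, then $\norm{v_k^c}>\zeta/\sqrt{\sigma_k}$, and \eqref{eq:beta} with Lemma \ref{lem:normalstepbound} gives $\beta_k\norm{c_k}_1\geq\theta\norm{c_k}_1/(\norm{v_k^c}\sqrt{\sigma_k})\geq\theta/(\kappa_v\sqrt{\sigma_k})$. So $\Delta q_k\gtrsim\sigma_k^{-1/2}$, while Lemmas \ref{lem:ratiobound} and \ref{lem:fullstepbound} bound the prediction error by $O(\sigma_k^{-1})$; hence $\rho_k>\eta_2$ eventually. This is exactly why the threshold $\mathcal{K}$ was defined with $\zeta<\theta$.

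If $k\in\mathcal{K}$ and $\rho_k<\eta_1$, a correction is computed, and the main obstacle is to show
\begin{equation*}
\varphi(x_k+d_k+w_k,\mu_k)-q_k(d_k)\leq C\big(\norm{d_k}^3+\norm{v_k}\,\norm{d_k}+\text{terms in }\norm{c_k}_1\big),
\end{equation*}
where every term is either $O(\norm{d_k}^3)$ or absorbed by $\tfrac{\tau}{2}\mu_{-1}\beta_k\norm{c_k}_1$. Obtaining an $O(\norm{d_k}^3)$ remainder rather than the $O(\norm{d_k}^2)$ of Lemma \ref{lem:ratiobound} is what makes saddle escape possible. I would carry out the following expansion.
\begin{itemize}
\item[] Expand $f(x_k+d_k+w_k)$ to second order with \hyperlink{f5}{(F5)}.
\item[] Write $g_k^\top w_k=-\lambda_k^\top A_kw_k+(g_k+A_k^\top\lambda_k)^\top w_k$, controlling the last term by \eqref{eq:lambdas}, since $w_k\in\mathcal{R}(A_k^\top)$.
\item[] Substitute $A_kw_k=-c(x_k+d_k)+O(\norm{d_k}^3)$ from \eqref{eq:w}, and expand $c(x_k+d_k)=c_k+A_kd_k+\tfrac12[d_k^\top\nabla^2c^i_kd_k]_i+O(\norm{d_k}^3)$. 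The multiplier-weighted constraint curvature then combines with $\nabla^2f_k$ into $\tfrac12d_k^\top H_kd_k$, cancelling the quadratic mismatch.
\item[] Handle $\norm{c(x_k+d_k+w_k)}_1$ analogously, using $\norm{w_k}=O(\norm{d_k}^2+\norm{c_k}_1)$.
\end{itemize}
The leftover $\lambda_k^\top(c_k+A_kd_k)$-type terms are $O((1-\beta_k+r_v)\norm{c_k}_1)$. Since $k\in\mathcal{K}$ forces $\beta_k=1$ once $\zeta<\theta\leq1$, these reduce to $r_v$ terms, which are $O(\norm{v_k^c}^3)$ by \eqref{eq:normalstep3}. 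With $\norm{d_k}\leq\kappa_d\sigma_k^{-1/2}$ and $\Delta q_k\gtrsim\sigma_k\norm{u_k}^3$, the ratio $1-\rho_k^{corr}$ tends to $0$.

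The delicate point I expect is the cross term between $\norm{v_k}$ and $\norm{u_k}$ when $\norm{u_k}$ is as small as $\epsilon/\sigma_k$. To handle it, I would bound $\norm{d_k}^3\lesssim\norm{u_k}^3+\norm{v_k}^3$ and charge $\norm{v_k}^3\leq\kappa_v\norm{c_k}_1\norm{v_k}^2\leq\kappa_v\zeta^2\norm{c_k}_1/\sigma_k$ to the $\tau\mu\beta_k\norm{c_k}_1$ part of $\Delta q_k$.
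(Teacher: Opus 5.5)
Your large-$\sigma_k$ case analysis is sound and is essentially the content of Lemma~\ref{lem:sigmabound}. The gap is in the contradiction frame you build around it.

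Refuting ``$\lambda_{\min}(Z_k^\top H_k Z_k)\le-\epsilon$ for all $k\ge k_0$'' proves only $\limsup_{k\to\infty}\lambda_{\min}(Z_k^\top H_kZ_k)\ge0$, not the stated $\liminf$. The reduction you cite does not close that difference. In Theorem~\ref{th:constraintsconvergence} the unsuccessful-predecessor trick yields only $\liminf_k\norm{c_k}_1=0$. That is because Lemma~\ref{lem:technicallemma} is applied at the predecessors $k_j-1$ and needs the lower bound on $\Delta q_{k_j-1}$ there, i.e.\ it needs the hypothesis at every large $k$. The upgrade to a full limit comes from the separate uniform-continuity argument, and that argument does not transfer to curvature. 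On a stretch where $\lambda_{\min}(Z_k^\top H_kZ_k)\le-\epsilon/2$ you only have $\Delta q_k\gtrsim\epsilon^3\sigma_k^{-2}$ (or $\gtrsim\epsilon\norm{u_k}^2$). Neither dominates $\norm{d_k}\le\kappa_d\sigma_k^{-1/2}$ when $\sigma_k$ is large, so you cannot conclude $\sum\norm{x_{k+1}-x_k}\to0$ over such stretches.

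The missing observation is that neither of your two cases uses the curvature hypothesis. For $k\notin\mathcal{K}$ you only need $\norm{v_k^c}>\zeta/\sqrt{\sigma_k}$. (There $\beta_k=1$ is possible, so your inequality with $\theta$ can fail; use $\norm{c_k}_1\ge\norm{v_k^c}/\kappa_v>\zeta/(\kappa_v\sqrt{\sigma_k})$ instead, as the paper does.) For $k\in\mathcal{K}$ you only need the unconditional bound $\Delta q_k\ge(\frac16-\delta)\sigma_k\norm{u_k}^3+\tau\mu_{-1}\norm{c_k}_1$. So your estimates already show that every iteration with $\sigma_k$ above a fixed threshold is successful, i.e.\ $\sigma_k\le\sigma_{\max}$ for all $k$. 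This is exactly the paper's route. The paper handles the $\mathcal{K}$ case more simply by keeping the $-\frac{\sigma_k}{3}\norm{d_k}^3$ term of $q_k(d_k)$, giving $\varphi(x_k+d_k+w_k,\mu_k)-q_k(d_k)\le(C_2-\sigma_k)\norm{d_k}^3/3\le0$, hence $\rho_k^{corr}\ge1$, with no cross-term bookkeeping.

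With $\sigma_k$ bounded, no contradiction is needed. Lemmas~\ref{lem:deltanullstepbounds} and~\ref{lem:technicallemma1} give $\norm{u_k}\to0$ on $\mathcal{S}$. Then (OR3) with Lemma~\ref{lem:dualvariables}(ii) gives $\lambda_{\min}(Z_k^\top H_kZ_k)\ge-\sigma_{\max}\norm{u_k}$ on $\mathcal{S}$. At unsuccessful iterations $u_k$ need not vanish, but $x_k$ is unchanged and $\norm{\lambda_k-\lambda_k^*}\le r_\lambda\kappa_v\gamma_A^{-2}\norm{c_k}_1\to0$, so the bound transfers.

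Bounded $\sigma_k$ also makes the first two limits immediate from Lemma~\ref{lem:technicallemma1}, so you can skip rerunning Section~\ref{sec:sec3} with corrections. If you do rerun it, note that your estimate $\norm{w_k}\lesssim r_v\norm{c_k}_1+\norm{d_k}^2$ is not small on the set $\mathcal{T}$ of Theorem~\ref{th:constraintsconvergence}, where $\norm{c_k}_1\ge\varepsilon'$. There you should use the $\norm{v_k^c}^3$ branch of \eqref{eq:normalstep3}, as in Lemma~\ref{lem:wbound}.
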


In comparison to the previous section, for strong second-order theoretical guarantees, all approximation criteria for the tangential step provided by the \textit{SCP Oracle} are needed. The following bounds on the decrease of the model $m_k^U$ are standard and have appeared previously in the fundamental works of Cartis et al. \cite{cartis2011adaptive, cartis2011adaptive2}, over an unconstrained optimization setting and slightly different termination criteria.

\begin{lemma}\label{lem:deltanullstepbounds}
    If $u_k\in\mathcal{R}(P_k)$ satisfies \hyperlink{or2}{(OR2)} and \hyperlink{or3}{(OR3)}, then $\Delta m_k^U(u_k)\geq(\frac{1}{6}-\delta)\sigma_k \norm{u_k}^3$ for every $k\geq 0$.
\end{lemma}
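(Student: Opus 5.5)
The plan is to follow the classical argument of \cite[Lemma 3.3]{cartis2011adaptive}, adapted to the projected cubic term. Since $u_k\in\mathcal{R}(P_k)$ and $P_k$ is an orthogonal projection, we have $P_k u_k=u_k$ and hence $\norm{P_k u_k}=\norm{u_k}$. We also have $u_k^\top \widetilde{H}_k u_k=u_k^\top H_k u_k$ and $\widetilde{g}_k^\top u_k=(g_k+H_kv_k)^\top u_k$. So, writing $\Delta m_k^U(u_k)=-\widetilde{g}_k^\top u_k-\frac12 u_k^\top\widetilde{H}_k u_k-\frac{\sigma_k}{3}\norm{u_k}^3$, the model behaves along $u_k$ exactly like an unconstrained cubic model.

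First, compute the gradient. Using $P_k^\top P_k=P_k$ and $P_ku_k=u_k$, one has $\nabla m_k^U(u_k)=\widetilde{g}_k+\widetilde{H}_k u_k+\sigma_k\norm{u_k}u_k$. This gives
\begin{equation*}
u_k^\top\nabla m_k^U(u_k)=\widetilde{g}_k^\top u_k+u_k^\top\widetilde{H}_ku_k+\sigma_k\norm{u_k}^3 .
\end{equation*}
Solve this for $\widetilde{g}_k^\top u_k$ and substitute into $\Delta m_k^U(u_k)$ to obtain
\begin{equation*}
\Delta m_k^U(u_k)=-u_k^\top\nabla m_k^U(u_k)+\tfrac12 u_k^\top\widetilde{H}_ku_k+\tfrac{2\sigma_k}{3}\norm{u_k}^3 .
\end{equation*}

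Next, bound the two remaining terms. By Cauchy--Schwarz and \hyperlink{or2}{(OR2)}, $-u_k^\top\nabla m_k^U(u_k)\geq-\delta\sigma_k\norm{u_k}^3$. For the curvature term, \hyperlink{or3}{(OR3)} gives $u_k^\top\widetilde{H}_ku_k\geq\lambda_{\min}(P_k^\top H_kP_k)\norm{u_k}^2\geq-\sigma_k\norm{u_k}^3$. Combining the two bounds yields $\Delta m_k^U(u_k)\geq(\frac23-\frac12-\delta)\sigma_k\norm{u_k}^3=(\frac16-\delta)\sigma_k\norm{u_k}^3$, which is the claim.

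The only delicate step is the gradient identity. The cubic term is $\frac{\sigma_k}{3}\norm{P_ku}^3$, whose gradient is $\sigma_k\norm{P_ku}P_k^\top P_k u$. One must check that this reduces to $\sigma_k\norm{u_k}u_k$ when $u_k\in\mathcal{R}(P_k)$, which holds by symmetry and idempotence of $P_k$. Everything else is routine algebra.
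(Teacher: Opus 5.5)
Your proof is correct and is essentially the paper's argument: you use the identity for $\nabla m_k^U(u_k)^\top u_k$ to eliminate $\widetilde{g}_k^\top u_k$, bound $u_k^\top\nabla m_k^U(u_k)$ by $\delta\sigma_k\norm{u_k}^3$ via (OR2), and bound the curvature term via (OR3). Your explicit check that the projected cubic term has gradient $\sigma_k\norm{u_k}u_k$ when $u_k\in\mathcal{R}(P_k)$ fills in a step the paper uses without comment in \eqref{eq:4.1}.
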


\begin{proof}
    By definition we have $\Delta m_k^U(u_k)=-\widetilde{g}_k^\top u_k-\frac{1}{2}u_k^\top \widetilde{H}_k u_k-\frac{\sigma_k}{3}\norm{u_k}^3$. In addition,
    \begin{equation}\label{eq:4.1}
        \nabla m_k^U(u_k)^\top u_k=+\widetilde{g_k}^\top u_k+u_k^\top \widetilde{H}_k u_k+\sigma_k\norm{u_k}^3.
    \end{equation}
Condition \hyperlink{or2}{(OR2)} then implies
\begin{equation}\label{eq:4.2}
    \nabla m_k^U(u_k)^\top u_k\leq \norm{\nabla m_k^U(u_k)}\norm{u_k}\leq \delta \sigma_k \norm{u_k}^3,
\end{equation}
thus,
\begin{equation}\label{eq:lemmaOR23}
        \Delta m_k^U(u_k)\geq \frac{1}{2}u_k^\top \widetilde{H}_k u_k +\left(\frac{2}{3}-\delta\right)\sigma_k\norm{u_k}^3.
    \end{equation}
Thus, by \hyperlink{or3}{(OR3)}, we have $\Delta m_k^U(u_k)\geq (\frac{1}{6}-\delta)\sigma_k\norm{u_k}^3$.
\end{proof}

The next technical result along with Theorem \ref{th:constraintsconvergence} imply that the sequence of accepted trial steps $\{d_k\}_{k\in\mathcal{S}}$ eventually vanishes. 

\begin{lemma}\label{lem:nullstepconvergence}
    Suppose that \hyperlink{f1}{(F1)}-\hyperlink{f4}{(F4)} hold. Then, $\lim\limits_{k\to\infty}\norm{u_k}=0$.
\end{lemma}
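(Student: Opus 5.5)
The plan is to mirror the ARC argument \cite[Lemma 5.1]{cartis2011adaptive}. We combine the cubic lower bound of Lemma \ref{lem:deltanullstepbounds} with the summability mechanism of Lemma \ref{lem:technicallemma1} on successful iterations, and then pass to unsuccessful ones through the growth of $\sigma_k$. (Lemma \ref{lem:deltanullstepbounds} relies on \hyperlink{or2}{(OR2)}--\hyperlink{or3}{(OR3)}, which the Oracle supplies in this section.)

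\emph{Successful iterations.} By Lemmas \ref{lem:reduction} and \ref{lem:deltanullstepbounds},
\[
\Delta q_k\geq \Delta m_k^U(u_k)\geq \left(\tfrac16-\delta\right)\sigma_k\norm{u_k}^3\geq \left(\tfrac16-\delta\right)\sigma_{\min}\norm{u_k}^3 .
\]
If $\mathcal{S}$ is infinite, apply Lemma \ref{lem:technicallemma1} with $\mathcal{I}=\mathcal{S}$ and $\alpha_k:=(\frac16-\delta)\sigma_{\min}\norm{u_k}^3$. This gives $\norm{u_k}\to0$ along $\mathcal{S}$, and in fact $\sigma_k\norm{u_k}^3\to 0$ there.

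\emph{Finite $\mathcal{S}$.} Here Lemma \ref{lem:finitecase} gives $c_k=0$ and $\widetilde g_k=0$ for all $k\geq k^*+1$. Since $\sigma_k\to\infty$, Lemma \ref{lem:tangentialstepbound} yields $\norm{u_k}\leq 3\kappa_H/\sigma_k\to 0$.

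\emph{Unsuccessful iterations when $\mathcal{S}$ is infinite.} Let $k\in\mathcal{U}$ and let $s(k)<k$ be the last successful iterate before it, or $s(k)=-1$ if there is none; only finitely many $k$ fall in the latter case, so it is irrelevant for the limit. All iterates $s(k)+1,\dots,k$ share the point $x_{s(k)+1}$, while $\sigma$ grows geometrically along this block. By Lemma \ref{lem:tangentialstepbound},
\[
\norm{u_k}\leq 3\max\left\{\frac{\kappa_H}{\sigma_k},\sqrt{\frac{\norm{\widetilde g_k}}{\sigma_k}}\right\}.
\]
The quantity $\widetilde g_k = P_k^\top(g_k+H_kv_k)$ depends on $k$ only through $v_k$ and $\lambda_k$ (via $\beta_k$, $\sigma_k$ and the inexact solves), and it is bounded by $\kappa_{\widetilde g}$. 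We now split into two cases.
\begin{enumerate}[(a)]
\item If $\sigma_k\to\infty$ along the subsequence in question, the displayed bound forces $\norm{u_k}\to 0$.
\item If $\sigma_k$ stays bounded along an infinite subsequence of $\mathcal{U}$, argue by contradiction. Suppose $\norm{u_k}\geq\epsilon$ on that subsequence. Since $\sigma_k\leq\bar\sigma$ there, the block lengths $k-s(k)$ are bounded, so $\sigma_{s(k)+1}\geq\gamma_1^{-(k-s(k)-1)}\sigma_k$ is comparable to $\sigma_k$. We then need to relate $u_k$ to the step computed at a nearby successful iteration.
\end{enumerate}

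The main obstacle is case (b). The point is fixed across the block, but $u_k$ is an inexact Oracle output, so no continuity between consecutive steps is guaranteed. I would first try to avoid comparing steps at all. Note that Theorem \ref{th:constraintsconvergence} and Theorem \ref{th:firstorderconvergence1}, together with Lemma \ref{lem:normalstepbound}, give $\norm{\widetilde g_k}\to0$ along all $k$. Then $\norm{u_k}\leq 3\max\{\kappa_H/\sigma_k,\sqrt{\norm{\widetilde g_k}/\sigma_k}\}$ reduces the claim to controlling $\kappa_H/\sigma_k$. That term is not small when $\sigma_k$ is bounded, which is exactly the negative-curvature situation. To handle it, I would use \hyperlink{or2}{(OR2)} and \hyperlink{or3}{(OR3)}: with $\widetilde g_k$ tiny, a nonvanishing $u_k$ of size about $|\lambda_{\min}|/\sigma_k$ must give $\Delta q_k\gtrsim \sigma_k\epsilon^3$. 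Since $\sigma_k$ is bounded, Lemma \ref{lem:technicallemma} shows that such iterations become very successful once $\sigma_k$ grows. This growth is guaranteed because repeated unsuccessful iterations multiply $\sigma$ by at least $\gamma_1$ while the point is unchanged, so each block is finite. I would therefore try to show that $\sigma$ must reach a threshold $\bar\sigma(\epsilon)$ inside each such block, and derive a contradiction with $\sigma_k$ being bounded at an iterate with $\norm{u_k}\geq \epsilon$. This step may require a separate Lipschitz-Hessian ratio estimate, likely of the form $\kappa\norm{d_k}^3$ under \hyperlink{f5}{(F5)}, to close the argument.
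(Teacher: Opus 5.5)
Your argument on successful iterations is exactly the paper's proof. The paper then finishes in one line: it asserts that $u_k=0$ for every $k\in\mathcal{U}$, and calls the finite-$\mathcal{S}$ case trivial for the same reason. In other words, it counts a rejected step as null, and under that convention your first step is already the whole proof.

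You instead take $u_k$ to be the Oracle output even when that output is rejected. This is the reading Theorem \ref{th:curvatureconvergence} needs, since it applies (OR3) to $u_k$. You are right that such an output need not vanish. Under this reading, however, your proposal has a genuine gap: case (b) is never closed, and the route you sketch would not close it.

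\begin{itemize}
\item Lemma \ref{lem:technicallemma} needs $\Delta q_j\geq\min\{\delta_1,\delta_2/\sqrt{\sigma_j}\}$, and that hypothesis fails here.
\item Inside a block at a fixed, nearly feasible point with curvature about $-c\epsilon$, (OR3) and Lemma \ref{lem:tangentialstepbound} pin $\norm{u_j}$ at order $1/\sigma_j$ once $\widetilde g_j$ is negligible.
\item Then $\Delta q_j$ and the error $\kappa_0\norm{d_j}^2$ of Lemma \ref{lem:ratiobound} both scale like $\sigma_j^{-2}$, so the resulting bound on $1-\rho_j$ does not tend to zero as $\sigma_j$ grows.
\item Nothing in (F1)--(F4) bounds the value of $\sigma$ at which a block ends.
\item A single rejected step with small $\sigma_k$ and $\norm{u_k}\geq\epsilon$ is not contradictory in itself.
\end{itemize}

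The missing idea is to transfer curvature forward to the successful iterate $n(k)>k$ that ends the block, which has $x_{n(k)}=x_k$. Comparing with $s(k)$ is the wrong direction, because $x_{s(k)}\neq x_k$. Steps need not be continuous from one iteration to the next, but curvature at a fixed point is.

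\begin{enumerate}
\item By \eqref{eq:4.1} and (OR2), $\norm{u_k}\geq\epsilon$ gives $\lambda_{\min}(P_k^\top H_kP_k)\leq-(1-\delta)\sigma_{\min}\epsilon+\norm{\widetilde g_k}/\epsilon$. Since $\norm{\widetilde g_k}\to0$, as you noted, this is at most $-\tfrac12(1-\delta)\sigma_{\min}\epsilon$ for large $k$.
\item At the same point we have $P_{n(k)}=P_k$, $\lambda^*_{n(k)}=\lambda^*_k$ and $c_{n(k)}=c_k$. Lemma \ref{lem:dualbounds} then gives $\norm{H_{n(k)}-H_k}\leq L_A\norm{\lambda_{n(k)}-\lambda_k}\leq 2L_Ar_\lambda\kappa_v\gamma_A^{-2}\norm{c_k}_1\to0$.
\item Hence (OR3) at $n(k)$ yields $\sigma_{n(k)}\norm{u_{n(k)}}\gtrsim(1-\delta)\sigma_{\min}\epsilon$ for large $k$.
\end{enumerate}

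This contradicts $\norm{u_n}\to0$ on $\mathcal{S}$ only if $\sigma_{n(k)}$ stays bounded. That bound is Lemma \ref{lem:sigmabound}, which requires (F5).

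So for rejected Oracle outputs the claim holds under (F1)--(F5), which is all Theorem \ref{th:curvatureconvergence} uses. Under (F1)--(F4) alone, neither your sketch nor the paper's one-line remark establishes it.
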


\begin{proof}
    Lemma \ref{lem:reduction},  \ref{lem:deltanullstepbounds} and the bound $\sigma_k\geq \sigma_{\min}$ give
    \begin{equation}
        \Delta q_k\geq\Delta m_k^U(u_k)\geq\sigma_{\min}\left(\frac{1}{6}-\delta\right)\norm{u_k}^3\;\;\forall k\geq 0.
    \end{equation}
Assuming that $\mathcal{S}$ is infinite (otherwise the result follows trivially), we can apply Lemma \ref{lem:technicallemma1} with $\mathcal{I}:=\mathcal{S}$ and $\alpha_k:=\sigma_{\min}\left(\frac{1}{6}-\delta\right)\norm{u_k}^3$ to get $\norm{u_k}\to 0$ as $k\to\infty$, $k\in\mathcal{S}$. Since $u_k=0$ for every $k\in\mathcal{U}$, the result follows.
\end{proof}

 One crucial property of the algorithm proposed is that sufficient decrease in the model of the merit function is always achieved and the ratios $\rho_k,\;\rho_k^{corr}$ are always well-defined. This property is now rigorously verified: It is shown that sufficient predicted reduction is always achieved, i.e., $\Delta q_k>0$ holds at each iteration, unless we are at a second-order stationary point. Recall that positive predicted reduction was guaranteed in Remark \ref{rem:remark1} when first-order points were the points of interest.\par
 Fix an iteration $k\geq 0$ and consider the following cases:
\begin{itemize}
    \item If $\norm{c_k}_1\neq 0$, then $\Delta q_k>0$ due to Lemma \ref{lem:reduction}.
    \item If $\norm{c_k}_1=0$ and $\norm{\nabla_x\mathcal{L}(x_k,\lambda_k)}\neq0$, then $\norm{\nabla_x\mathcal{L}(x_k,\lambda_k)}=\norm{\widetilde{g}_k}\neq 0$ by Lemma \ref{lem:dualvariables}, since $\lambda_k^*=\lambda_k$ due to Lemma \ref{lem:dualbounds}. Therefore, $\Delta q_k>0$ by Lemma \ref{lem:deltagkbound}.
    \item  If $\norm{c_k}_1=0$, $\norm{\nabla_x\mathcal{L}(x_k,\lambda_k)}=0$ and $\lambda_{\min}(Z_k^\top H_k Z_k)<0$, then $\norm{u_k}\neq0$ due to \hyperlink{or3}{(OR3)} and Lemma \ref{lem:dualvariables}, thus $\Delta q_k>0$ by Lemma \ref{lem:deltanullstepbounds}.
    \item If $\norm{c_k}_1=0$, $\norm{\nabla_x\mathcal{L}(x_k,\lambda_k)}=0$ and $\lambda_{\min}(Z_k^\top H_k Z_k)\geq 0$, then $x_k$ is a second-order stationary point, so the SCP algorithm terminates.
\end{itemize}

Next, we relate the residuals of the approximations for the correction steps with the true least-squares estimators:

\begin{lemma}\label{lem:correctionbounds}
    If \hyperlink{f2}{(F2)} holds, then $\norm{w_k-w_k^*}\leq \frac{r_w}{\gamma_A}\norm{d_k}^3$.
\end{lemma}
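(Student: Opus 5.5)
The plan is to compare $w_k$ and $w_k^*$ through the operator $A_k$ and then invert $A_k$ on its range, where it is bounded below. Both vectors lie in $\mathcal{R}(A_k^\top)$: $w_k$ by the requirement stated before \eqref{eq:w}, and $w_k^*$ because it has the form $A_k^\top(\cdot)$ by \eqref{eq:correction}. Hence their difference $w_k-w_k^*$ also lies in $\mathcal{R}(A_k^\top)$. Vectors in $\mathcal{R}(A_k^\top)$ are orthogonal to $\mathcal{N}(A_k)$, so $A_k$ acts injectively on them.

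The first step is to quantify this injectivity. For any $y\in\mathcal{R}(A_k^\top)$, writing $y=A_k^\top z$ and using the singular value decomposition of $A_k$ (full row rank by \hyperlink{f2}{(F2)}) gives
\[
\norm{A_k y}\geq \sigma_{\min}(A_k)\norm{y}\geq \gamma_A\norm{y}.
\]
This is the same estimate already used in the proof of Lemma \ref{lem:normalstepbound}.

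The second step is to compute $A_k w_k^*$ directly from \eqref{eq:correction}:
\[
A_k w_k^* = -A_kA_k^\top(A_kA_k^\top)^{-1}c(x_k+d_k) = -c(x_k+d_k).
\]
Therefore
\[
A_k(w_k-w_k^*) = A_k w_k + c(x_k+d_k),
\]
and by \eqref{eq:w} its norm is at most $r_w\norm{d_k}^3$.

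The third step combines the two: applying the first step with $y=w_k-w_k^*$ gives
\[
\gamma_A\norm{w_k-w_k^*}\leq \norm{A_k(w_k-w_k^*)}\leq r_w\norm{d_k}^3,
\]
which is the claim.

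There is no real obstacle here. The only point that needs care is that the lower bound $\norm{A_k y}\geq\gamma_A\norm{y}$ is valid only on $\mathcal{R}(A_k^\top)$, not on all of $\mathbb{R}^n$. That is why the membership $w_k\in\mathcal{R}(A_k^\top)$, imposed when the correction step is defined, is essential to the argument.
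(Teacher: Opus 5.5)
Your proposal is correct and follows essentially the same route as the paper: both use $w_k-w_k^*\in\mathcal{R}(A_k^\top)$ together with (F2) to get $\gamma_A\norm{w_k-w_k^*}\leq\norm{A_k(w_k-w_k^*)}$. Both then use $A_kw_k^*=-c(x_k+d_k)$ to identify this quantity with $\norm{A_kw_k+c(x_k+d_k)}\leq r_w\norm{d_k}^3$. You also correctly point out that the lower bound holds only on $\mathcal{R}(A_k^\top)$, which the paper uses without comment.
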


\begin{proof}
     The LICQ assumption yields
\begin{align*}
    \gamma_A\norm{w_k-w_k^*}&\leq \norm{A_k(w_k-w_k^*)} \\ &=\norm{A_k w_k+c(x_k+d_k)-A_k w_k^*-c(x_k+d_k)} \nonumber \\  &=\norm{A_k w_k+c(x_k+d_k)}\\ &\leq r_w\norm{d_k}^3,
\end{align*}
where we've used \eqref{eq:correction} and equivalence of norms along with $w_k-w_k^*\in\mathcal{R}(A_k^\top)$.
\end{proof}

As second-order corrections are incorporated to ensure global convergence, we provide a useful upper bound.

\begin{lemma}\label{lem:wbound}
    Suppose that \hyperlink{f1}{(F1)}-\hyperlink{f2}{(F2)} and \hyperlink{f5}{(F5)} hold. If $k\in\mathcal{C}$, then $\norm{w_k}\leq\kappa_w\norm{d_k}^2$, for some $\kappa_w>0$.
\end{lemma}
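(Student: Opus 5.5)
Split $w_k$ into the exact least-squares correction $w_k^*$ from \eqref{eq:correction} and the inexactness error. By Lemma \ref{lem:correctionbounds},
\[
\norm{w_k}\leq\norm{w_k^*}+\frac{r_w}{\gamma_A}\norm{d_k}^3,
\]
so it suffices to show $\norm{w_k^*}\leq C\norm{d_k}^2$ and $\norm{d_k}^3\leq C'\norm{d_k}^2$. The second bound follows from Lemma \ref{lem:fullstepbound}, since $\norm{d_k}\leq\kappa_d/\sqrt{\sigma_{\min}}$. For the first, (F2) gives $\norm{A_k^\top(A_kA_k^\top)^{-1}}\leq 1/\gamma_A$, hence
\[
\norm{w_k^*}\leq\gamma_A^{-1}\norm{c(x_k+d_k)}.
\]
It therefore remains to show $\norm{c(x_k+d_k)}=\mathcal{O}(\norm{d_k}^2)$ for $k\in\mathcal{C}$.

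Use Taylor's theorem on each $c_i$ with the bounded Hessians of (F5) (or, equivalently, the Lipschitz Jacobian from (F1)):
\[
\norm{c(x_k+d_k)-c_k-A_kd_k}\leq\tfrac{1}{2}\kappa_{ch}\norm{d_k}^2 .
\]
It remains to bound the linearized residual $c_k+A_kd_k$. Since $u_k\in\mathcal{N}(A_k)$, we have $A_kd_k=\beta_kA_kv_k^c$, so
\[
c_k+A_kd_k=(1-\beta_k)c_k+\beta_k(c_k+A_kv_k^c).
\]
By \eqref{eq:normalstep3}, $\norm{c_k+A_kv_k^c}_1\leq r_v\norm{v_k^c}^3$.

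The crucial use of $k\in\mathcal{C}\subseteq\mathcal{K}$ comes next. There $\norm{v_k^c}\leq\zeta/\sqrt{\sigma_k}<\theta/\sqrt{\sigma_k}$, so the lower endpoint in \eqref{eq:beta} equals $1$ and $\beta_k=1$. The term $(1-\beta_k)c_k$ vanishes, $v_k=v_k^c$, and
\[
\norm{c_k+A_kd_k}\leq r_v\norm{v_k}^3 .
\]
This is the main obstacle. Outside $\mathcal{K}$, the term $(1-\beta_k)\norm{c_k}$ is only $\mathcal{O}(\norm{c_k})$ and is not controlled by $\norm{d_k}^2$, so the membership $k\in\mathcal{K}$ is exactly what removes it.

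It remains to relate $\norm{v_k}$ to $\norm{d_k}$. Because $v_k\in\mathcal{R}(A_k^\top)$ and $u_k\in\mathcal{N}(A_k)$ are orthogonal, $\norm{v_k}\leq\norm{d_k}$. Then $\norm{v_k}^3\leq\norm{d_k}^3\leq(\kappa_d/\sqrt{\sigma_{\min}})\norm{d_k}^2$. Combining all the bounds gives the result with
\[
\kappa_w:=\frac{1}{\gamma_A}\left(\frac{\kappa_{ch}}{2}+\frac{(r_v+r_w)\kappa_d}{\sqrt{\sigma_{\min}}}\right)+\frac{r_w\kappa_d}{\gamma_A\sqrt{\sigma_{\min}}},
\]
possibly adjusted by norm-equivalence constants between $\norm{\cdot}$ and $\norm{\cdot}_1$, which are harmless since $\norm{\cdot}\leq\norm{\cdot}_1$.
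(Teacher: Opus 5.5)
Your proof is correct and follows essentially the same route as the paper. Both split off $w_k-w_k^*$ via Lemma \ref{lem:correctionbounds}, bound $\norm{w_k^*}\le\gamma_A^{-1}\norm{c(x_k+d_k)}$, use $k\in\mathcal{K}\Rightarrow\beta_k=1$ so that $\norm{c_k+A_kd_k}_1\le r_v\norm{v_k^c}^3$, bound the Taylor remainder, and absorb the cubic terms using $\norm{d_k}\le\kappa_d/\sqrt{\sigma_{\min}}$. The only differences are minor: the paper bounds the Taylor remainder with $L_A$ via Lemma \ref{lem:lipcon} rather than $\kappa_{ch}$, your $\kappa_w$ harmlessly counts the $r_w$ term twice, and your factor $\sqrt{\sigma_{\min}}$ is the correct one where the paper's displayed constant writes $\sigma_{\min}$.
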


\begin{proof}
    Since $k\in\mathcal{C}$, we must have $k\in\mathcal{K}$. The inequality $\norm{v_k^c}\leq\frac{\zeta}{\sqrt{\sigma_k}}$ yields $\beta_k=1$. Therefore, from (\ref{eq:normalstep2}) and $u_k \in \mathcal{N}(A_k)$, we have $A_k d_k = A_k v_k^c$. Thus, by \eqref{eq:normalstep3}, (\ref{eq:correction}), \eqref{eq:w} and Lemmas \ref{lem:lipcon}, \ref{lem:fullstepbound}, and \ref{lem:correctionbounds}, we get
\begin{align}
    \norm{w_k}&\leq \norm{w_k^*-w_k}+\norm{w_k^*} \nonumber \\
    &\leq \frac{r_w}{\gamma_A}\norm{d_k}^3+\frac{1}{\gamma_A}\norm{c(x_k+d_k)} \nonumber \\
    &\leq \frac{r_w \kappa_d}{\gamma_A \sigma_{\min}}\norm{d_k}^2+\frac{r_v \kappa_d}{\gamma_A \sigma_{\min}}\norm{d_k}^2+\frac{L_A}{2\gamma_A}\norm{d_k}^2.
\end{align}
The result follows for $\kappa_w:= \frac{\kappa_d\left(r_w+r_v\right)}{\gamma_A\sigma_{\min}}+\frac{L_A}{2\gamma_A}$.
\end{proof}

Our next task is to show that Lipschitz continuity of the Hessian of the Lagrangian leads to an upper bound on the sequence of cubic regularization parameters. Second-order correction steps will always force the SCP algorithm to accept a trial step after only a finite number of ``failed attempts". It is important to note that this result - and thus assumption \hyperlink{f5}{(F5)} - simplifies the proof of Theorems \ref{th:firstorderconvergence} and \ref{th:fospcomplexity} significantly.

\begin{lemma}\label{lem:sigmabound}
Suppose that \hyperlink{f1}{(F1)}-\hyperlink{f2}{(F2)} and \hyperlink{f5}{(F5)} hold. Then, $\sigma_k\leq\max\{\sigma_0,\gamma_2\sigma^*\}=:\sigma_{\max}$ for every $k\geq0$, for some $\sigma^*>0$.    
\end{lemma}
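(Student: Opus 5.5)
By the update rule for $\sigma_k$, it suffices to find $\sigma^*$, independent of $k$, such that every iteration with $\sigma_k\ge\sigma^*$ is very successful. Such an iteration satisfies $\rho_k>\eta_2$, or $\rho_k^{corr}>\eta_2$ when a correction is taken, so $\sigma_{k+1}\le\sigma_k$. Consequently $\sigma_k$ can exceed $\sigma^*$ only through one unsuccessful step started below $\sigma^*$, which gives $\sigma_k\le\max\{\sigma_0,\gamma_2\sigma^*\}$.

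The basic tool will be a third-order version of Lemma \ref{lem:ratiobound}. Using (F5) together with Lemma \ref{lem:dualbounds}, the Lagrangian Hessian is Lipschitz along segments. I will therefore compare $\varphi(x_k+d_k,\mu_k)$ with $q_k(d_k)$ through the Lagrangian $\mathcal{L}(\cdot,\lambda_k)$ rather than through $f$ alone. Writing $f(x_k+d_k)=\mathcal{L}(x_k+d_k,\lambda_k)-\lambda_k^\top c(x_k+d_k)$ and expanding to second order, the Taylor error is $O(\norm{d_k}^3)$ and is absorbed by $\frac{\sigma_k}{3}\norm{d_k}^3$ once $\sigma_k$ is large. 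What remains is the constraint mismatch $-\lambda_k^\top\bigl(c(x_k+d_k)-c_k-A_kd_k\bigr)+\mu_k\bigl(\norm{c(x_k+d_k)}_1-\norm{c_k+A_kd_k}_1\bigr)$, which is only $O(\norm{d_k}^2)$.

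I then split into two cases according to the index set $\mathcal{K}$. \textbf{Case $k\notin\mathcal{K}$:} here $\norm{v_k^c}>\zeta/\sqrt{\sigma_k}$. Using $\beta_k\ge\min\{1,\theta/(\norm{v_k^c}\sqrt{\sigma_k})\}$ and Lemma \ref{lem:reduction}, I get $\Delta q_k\ge\tau\mu_{-1}\beta_k\norm{c_k}_1\ge c\,\norm{c_k}_1\min\{1,\theta/(\kappa_v\norm{c_k}_1\sqrt{\sigma_k})\}$. Since $\norm{v_k^c}\le\kappa_v\norm{c_k}_1$, we also have $\norm{c_k}_1\gtrsim\zeta/\sqrt{\sigma_k}$, and hence $\Delta q_k\gtrsim 1/\sqrt{\sigma_k}$. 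The error is at most $\kappa_0\norm{d_k}^2\le\kappa_0\kappa_d^2/\sigma_k$ by Lemmas \ref{lem:ratiobound} and \ref{lem:fullstepbound}. The ratio argument of Lemma \ref{lem:technicallemma} then gives $\rho_k>\eta_2$ for $\sigma_k$ large.

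\textbf{Case $k\in\mathcal{K}$:} here $\beta_k=1$ and $A_kd_k=A_kv_k^c$. If $\rho_k>\eta_2$ already holds there is nothing to prove. Otherwise the correction is computed, and I estimate $\varphi(x_k+d_k+w_k,\mu_k)-q_k(d_k)$. The key point is that $c(x_k+d_k+w_k)=c(x_k+d_k)+A_kw_k+O(\norm{w_k}\norm{d_k}+\norm{w_k}^2)$, and $c(x_k+d_k)+A_kw_k=O(\norm{d_k}^3)$ by \eqref{eq:w}. Lemma \ref{lem:wbound} gives $\norm{w_k}\le\kappa_w\norm{d_k}^2$, so $\norm{c(x_k+d_k+w_k)}_1=O(\norm{d_k}^3)$, while $\norm{c_k+A_kd_k}_1\le r_v\min\{\norm{c_k}_1,\norm{v_k^c}^3\}$. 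For the $f$ part I will use the Lagrangian expansion at $x_k+d_k+w_k$ with $\lambda_k$. The first-order term $\nabla_x\mathcal{L}_k^\top w_k$ is small because $w_k\in\mathcal{R}(A_k^\top)$ and, by \eqref{eq:lambdas}, $A_k\nabla_x\mathcal{L}_k$ has norm at most $r_\lambda\norm{v_k}$; this gives $O(\norm{v_k}\norm{d_k}^2)$, which is $O(\norm{d_k}^3)$ up to constants since $\norm{v_k}\le\norm{d_k}$. The term $\lambda_k^\top c(x_k+d_k+w_k)$ is $O(\norm{d_k}^3)$ by the previous step, and the remaining terms are cubic. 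The net result is $\varphi(x_k+d_k+w_k,\mu_k)-q_k(d_k)\le(\kappa_3-\sigma_k/3)\norm{d_k}^3+(\text{terms}\le 0)$ for some constant $\kappa_3$. When $\sigma_k\ge3\kappa_3/(1-\eta_2)$ or similar, this yields $\rho^{corr}_k>\eta_2$, using $\Delta q_k>0$. I would take $\sigma^*$ as the maximum of the thresholds from the two cases.

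The main obstacle is the $k\in\mathcal{K}$ case: I must make sure every second-order remainder cancels against the correction and the multiplier choice, so that only cubic terms survive. The delicate items are the first-order coupling $\nabla_x\mathcal{L}_k^\top(d_k+w_k)$ and the linear model terms $g_k^\top d_k+\frac12 d_k^\top H_kd_k$. My plan is to write $q_k(d_k)$ as the second-order Lagrangian model plus $\lambda_k^\top A_kd_k$ terms, and to check that these combine with $-\lambda_k^\top c_k$ into $\lambda_k^\top(c_k+A_kd_k)$, which is bounded by $\kappa_\lambda r_v\norm{v_k^c}^3\le\kappa_\lambda r_v\norm{d_k}^3$.
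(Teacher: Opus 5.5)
Your argument is correct and is essentially the paper's: the same split on $\mathcal{K}$, the same comparison of $\Delta q_k\gtrsim\sigma_k^{-1/2}$ against the $\kappa_0\kappa_d^2/\sigma_k$ error off $\mathcal{K}$, and on $\mathcal{C}$ the same bound $\varphi(x_k+d_k+w_k,\mu_k)-q_k(d_k)\le(\mathrm{const}-\sigma_k/3)\norm{d_k}^3$ built from \eqref{eq:lambdas}, \eqref{eq:w}, the cubic part of \eqref{eq:normalstep3} and Lemma \ref{lem:wbound} (your single Taylor expansion of $\mathcal{L}(\cdot,\lambda_k)$ is just tidier bookkeeping for the paper's two-step expansion of $f$ combined with \eqref{eq:gw}). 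One small correction: for $k\in\mathcal{K}$ with $\eta_1\le\rho_k\le\eta_2$ no correction step is computed, so that iteration is only successful rather than very successful, but then $\sigma_{k+1}=\sigma_k$, which is all your induction needs.
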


\begin{proof}
    We shall identify two positive quantities $C_1$, $C_2$,  such that
    \begin{equation}
        \sigma_k\geq \max\{C_1,C_2\}\;\Longrightarrow\;k\in\mathcal{S}.
    \end{equation}
We first assume that $x_k$ lies away from the feasible region. More specifically, suppose $k\not\in\mathcal{K}$. By Lemmas \ref{lem:fullstepbound} and \ref{lem:ratiobound} we obtain (\ref{eq:important}) (ignoring the index set $\mathcal{I}$). From Lemma \ref{lem:normalstepbound} we get
\begin{equation}
    \Delta q_k\geq \tau\mu_k\beta_k\norm{c_k}_1\geq\frac{\tau\mu_{-1}}{\kappa_v}\min\left\{\frac{\zeta}{\sqrt{\sigma_k}},\frac{\theta}{\sqrt{\sigma_k}}\right\}=\frac{\tau\mu_{-1}\zeta}{\kappa_v\sqrt{\sigma_k}}.
\end{equation}
Therefore,
\begin{equation}\label{eq:ratiobound1}
    1-\rho_k\leq \frac{\kappa_{\varphi}\kappa_v}{\tau\mu_{-1}\zeta}\frac{1}{\sqrt{\sigma_k}}.
\end{equation}
Thus, if $\sigma_k\geq C_1:=\left( \frac{\nu\kappa_{\varphi}\kappa_v}{\tau\mu_{-1}\zeta(1-\eta_1)} \right)^2$, then $k$ is successful.

Next, suppose that $k\in\mathcal{K}$ and $\sigma_k\geq C_1$. Without loss of generality, we may assume that the first ratio test fails, thus we need a correction step, i.e., we only need to explore the case $k\in\mathcal{C}$ (if for the iterate $k\in\mathcal{K}$ the lower bound $\sigma_k\geq C_1$ implies $\rho_k\geq \eta_1$, then we have nothing to show). The final result boils down to the existence of a constant $C_2$ such that, if $\sigma_k\geq C_2$, then 
    \begin{equation}
        \varphi(x_k+d_k+w_k,\mu_k)-q_k(d_k)\leq0.
    \end{equation}
This will imply
    \begin{equation}
        \rho_k^{corr}=\frac{\varphi(x_k,\mu_k)-(\varphi(x_k+d_k+w_k,\mu_k)-q_k(d_k))-q_k(d_k)}{\Delta q_k}\geq 1>\eta_2,
    \end{equation}
since $\varphi(x_k,\mu_k)-q_k(d_k)=q_k(0)-q_k(d_k)=\Delta q_k$. We have 
\begin{align}\label{eq:4.10}
    \varphi(x_k+d_k+w_k,\mu_k)-q_k(d_k) &= \left[f(x_k+d_k+w_k)-f_k-g_k^\top d_k-\frac{1}{2}d_k^\top H_k d_k -\frac{\sigma_k}{3}\norm{d_k}^3 \right] +\nonumber \\
    &+ \left[ \mu_k\norm{c(x_k+d_k+w_k)}_1-\mu_k\norm{c_k+A_k d_k}_1 \right].
\end{align}
By Taylor's theorem around $f(x_k+d_k)$ and $f(x_k+d_k+w_k)$ (applied twice by adding and subtracting $f(x_k+d_k)$ and $g(x_k+d_k)^\top w_k$) we can bound the first bracket term of (\ref{eq:4.10}) above by the quantity
\begin{equation}\label{eq:4.11}
    \frac{1}{2}\left|d_k^\top \left(\nabla^2f_k-\nabla^2f(\xi_k)\right)d_k\right|+\frac{1}{2}\left|w_k^\top\nabla^2f(\hat{\xi}_k) w_k\right|+\left| 
g(x_k+d_k)^\top w_k - \frac{1}{2}\sum_{i=1}^{m}\lambda_k^i d_k\nabla^2 c_k^i d_k \right|-\frac{\sigma_k}{3}\norm{d_k}^3
\end{equation}
where $\xi_k\in[x_k,x_k+d_k]$ and $\hat{\xi}_k\in[x_k+d_k,x_k+d_k+w_k]$. In turn, the first quantity of (\ref{eq:4.11}) is bounded above by $\frac{1}{2}L_{fh}\norm{d_k}^3$ due to Lipschitz continuity of the Hessian of the objective, and the second by $\frac{1}{2}\kappa_{fh}\norm{w_k}^2$. Further, the last absolute value of (\ref{eq:4.11}) is bounded by
\begin{equation}\label{eq:4.12}
    \left|g(x_k+d_k)^\top w_k-g_k^\top w_k\right| + \left| 
g_k^\top w_k - \frac{1}{2}\sum_{i=1}^{m}\lambda_k^i d_k\nabla^2 c_k^i d_k\right|.
\end{equation}
By Lipschitz continuity of the gradient of the objective, the first term of \eqref{eq:4.12} is bounded above by $L_g\norm{d_k}\cdot\norm{w_k}$. It remains to bound the last absolute value of \eqref{eq:4.12}. Recalling \eqref{eq:lambdas} and \eqref{eq:w}, we have that there exist vectors $\xi_k^{\lambda} \in \mathbb{R}^m$ and $\xi_k^w \in \mathbb{R}^m$ such that
\begin{equation*}
    A_k g_k + A_k A_k^{\top} \lambda_k + \xi_k^{\lambda} = 0 \quad \text{and} \quad \norm{\xi_k^{\lambda}} \leq r_\lambda \norm{v_k},
\end{equation*}
and
\begin{equation*}
    A_k w_k + c(x_k + d_k) + \xi_k^{w} = 0 \quad \text{and} \quad \norm{\xi_k^{w}} \leq r_w \norm{d_k}^3.
\end{equation*}
Since $w_k\in \mathcal{R}(A_k^\top)$, we have $w_k=A_k^\top (A_k A_k^\top)^{-1}A_k w_k$ by Lemma \ref{lem:dualvariables}. Therefore, we get
\begin{align}
    g_k^\top w_k&=g_k^\top A_k^\top (A_k A_k^\top)^{-1}A_k w_k \nonumber \\
    &=(A_k g_k)^\top (A_k A_k^\top)^{-1}A_k w_k \nonumber \\
    &=-\lambda_k^\top A_k w_k-(\xi_k^\lambda)^\top (A_k A_k^\top)^{-1}A_k w_k \nonumber \\
    &=\lambda_k^\top c(x_k+d_k)+\lambda_k^\top \xi_k^w-(\xi_k^\lambda)^\top (A_k A_k^\top)^{-1}A_k w_k. \label{eq:gw}
\end{align}
The last two terms in \eqref{eq:gw} are bounded by $r_w\kappa_{\lambda} \norm{d_k}^3$ and $\kappa_A r_\lambda \|v_k\| \|w_k\|/\gamma_A^2$, respectively. In addition, since $\beta_k = 1$, by Taylor's theorem we have
\begin{equation}
    \lambda_k^{\top} c(x_k + d_k) = \lambda_k^{\top} (c_k + A_k d_k) + \frac12 \sum_{i=1}^m \lambda_k^i d_k^{\top} \nabla^2 c^i(\tilde{\xi}_k^i) d_k,
\end{equation}
where each $\tilde{\xi}_k^i$ lies on the line segment between $x_k$ and $x_k + d_k$. Thus, the final term in \eqref{eq:4.12} is bounded by
\begin{align*}
    \left| g_k^\top w_k - \frac{1}{2}\sum_{i=1}^{m}\lambda_k^i d_k\nabla^2 c_k^i d_k\right| &\leq r_w\kappa_{\lambda} \norm{d_k}^3 + \frac{\kappa_A r_\lambda}{\gamma_A^2} \|v_k\| \|w_k\| + \|\lambda_k\| \|c_k + A_k d_k\| \\
    &\quad+ \frac12 \sum_{i=1}^m \left|\lambda_k^i\right| \left|d_k^{\top} (\nabla^2 c^i(\tilde{\xi}_k^i)- \nabla^2 c^i_k)d_k\right| \\
    &\leq r_w\kappa_{\lambda} \norm{d_k}^3 + \frac{\kappa_A r_\lambda}{\gamma_A^2} \|v_k\| \|w_k\| + \kappa_\lambda r_v \|v_k^c\|^3 \\
    &\quad+ \frac12 \|\lambda_k\| \sum_{i=1}^m \norm{\nabla^2 c^i(\tilde{\xi}_k^i)- \nabla^2 c^i_k} \norm{d_k}^2 \\
    &\leq r_w\kappa_{\lambda} \norm{d_k}^3+ \frac{\kappa_A r_\lambda}{\gamma_A^2} \|d_k\| \|w_k\| + \kappa_\lambda r_v \|d_k\|^3 + \frac{L_{ch} \kappa_\lambda}{2} \|d_k\|^3.
\end{align*}

Regarding the last bracket term of (\ref{eq:4.10}), we use Taylor's theorem to obtain
\begin{equation}\label{eq:4.15}
c(x_k+d_k+w_k)=c(x_k+d_k)+A(\overline{\xi_k}) w_k= c(x_k+d_k) + A_kw_k+\left( A(\overline{\xi_k}) - A_k \right) w_k,
\end{equation}
where $\overline{\xi_k}$ lies in the segment between $x_k+d_k$ and $x_k+d_k+w_k$. Therefore, by Lemmas \ref{lem:meritparameterbound}, \ref{lem:correctionbounds}, relations \eqref{eq:w}, (\ref{eq:4.15}) and the triangle inequality, we find
\begin{equation}\label{eq:4.16}
    \mu_k\norm{c(x_k+d_k+w_k)}_1-   \mu_k\norm{c_k+A_k d_k}_1\leq \mu_{\max} r_w\norm{d_k}^3+\mu_{\max} L_A\norm{w_k}^2.
\end{equation}
Recall now that $\norm{w_k}\leq\kappa_w\norm{d_k}^2$ from Lemma \ref{lem:wbound}. Also recall from Lemma \ref{lem:fullstepbound} that $\norm{d_k}\leq\frac{\kappa_d}{\sqrt{\sigma_{\min}}}$. Therefore, the quantities $\norm{w_k}^2$ and $\norm{d_k}\cdot\norm{w_k}$ can be bounded above by $\gamma_w\norm{d_k}^3$ for some fixed $\gamma_w>0$. Using this fact, we combine all the above upper bounds of (\ref{eq:4.10}) to obtain a positive constant $C_2$ such that
\begin{equation}
    \varphi(x_{k}+d_k+w_k,\mu_k)-q_k(d_k)\leq \left( C_2-\sigma_k \right)\frac{\norm{d_k}^3}{3}.
\end{equation}
Therefore, for every $k\geq0$, there exist positive constants $C_1,C_2$ such that $\sigma_k\geq\max\{C_1,C_2\}$ implies  that $k$ is successful. The result follows for $\sigma^*:=\max\{C_1,C_2\}$. Indeed, $\gamma_2>1$ takes care of the case when $\sigma_k$ is slightly below $\sigma^*$ and $k$ is unsuccessful, while $\sigma_0$ amounts to the initialization choice.
\end{proof}

The proof of Theorem \ref{th:secondorderconvergence} follows from Theorem \ref{th:firstorderconvergence} and the following non-negative curvature result:

\begin{theorem}\label{th:curvatureconvergence}
    Suppose that \hyperlink{f1}{(F1)}-\hyperlink{f5}{(F5)} hold. Then $\liminf\limits_{k\to\infty}\lambda_{\min}(Z_k^\top H_k Z_k)\geq0$.
\end{theorem}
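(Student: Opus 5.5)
The plan is to combine (OR3) with the vanishing of the tangential steps and the boundedness of the cubic regularization parameter. By (OR3) and Lemma \ref{lem:dualvariables}(ii),
\begin{equation*}
\min\{\lambda_{\min}(Z_k^\top H_k Z_k),0\}=\lambda_{\min}(P_k^\top H_k P_k)\geq -\sigma_k\norm{u_k}
\end{equation*}
for every iteration $k$. It therefore suffices to show that $\sigma_k\norm{u_k}\to 0$, at least along a subsequence covering all large $k$. Lemma \ref{lem:sigmabound} gives $\sigma_k\leq\sigma_{\max}$ under \hyperlink{f1}{(F1)}-\hyperlink{f2}{(F2)} and \hyperlink{f5}{(F5)}. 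This leaves only the claim $\norm{u_k}\to 0$ along all iterations, including unsuccessful ones.

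The delicate point is that Lemma \ref{lem:nullstepconvergence} gives $\norm{u_k}\to 0$ only along $\mathcal{S}$. Its statement "$u_k=0$ for $k\in\mathcal{U}$" should be read as saying that no step is taken, not that the computed tangential step vanishes. So I will argue directly.

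Since $\sigma_k\leq \sigma_{\max}$ and $\sigma_k$ is multiplied by at least $\gamma_1>1$ at each unsuccessful iteration, while $\sigma_k\geq\sigma_{\min}$, any run of consecutive unsuccessful iterations has length at most $N:=\lceil \log(\sigma_{\max}/\sigma_{\min})/\log\gamma_1\rceil$. Hence, if $\mathcal{S}$ is finite, this bound forces a contradiction unless the algorithm terminates, and then there is nothing to prove. If $\mathcal{S}$ is infinite, every $k$ lies within $N$ iterations before some successful index $s(k)$, and $x_k=x_{s(k)}$ because unsuccessful iterations leave the iterate unchanged.

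Along such a run the point is fixed, so $g_k$, $A_k$, $P_k$ and $c_k$ coincide with their values at $s(k)$; only $\sigma$, $\beta$, $v$, $\lambda$ and $u$ change. I will therefore bound $\lambda_{\min}(Z_k^\top H_k Z_k)$ through quantities at $s(k)$. Specifically, $\lambda_{\min}(Z^\top H(x,\lambda)Z)$ depends on $\lambda$ Lipschitz-continuously with constant $\kappa_{ch}$, since $\norm{Z}=1$. By Lemma \ref{lem:dualbounds}, $\norm{\lambda_k-\lambda_{s(k)}}\leq \frac{2r_\lambda}{\gamma_A^2}\kappa_v\norm{c_{s(k)}}_1$. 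Applying (OR3) at $s(k)$ then gives
\begin{equation*}
\lambda_{\min}(Z_k^\top H_kZ_k)\geq -\sigma_{\max}\norm{u_{s(k)}}-\frac{2 r_\lambda\kappa_{ch}\kappa_v}{\gamma_A^2}\norm{c_{s(k)}}_1 .
\end{equation*}

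As $k\to\infty$ we have $s(k)\to\infty$ within $\mathcal{S}$. Lemma \ref{lem:nullstepconvergence} then gives $\norm{u_{s(k)}}\to 0$, and Theorem \ref{th:constraintsconvergence} gives $\norm{c_{s(k)}}_1\to0$. So the right-hand side tends to $0$, and in fact the $\liminf$ holds as a full limit inequality.

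The main obstacle is making Lemma \ref{lem:nullstepconvergence} rigorous when correction steps are present. Lemma \ref{lem:technicallemma1} used $\varphi(x_{k+1},\mu_k)\leq\varphi(x_k,\mu_k)-\eta_1\Delta q_k$ on successful iterations. This still holds when $x_{k+1}=x_k+d_k+w_k$, by the $\rho_k^{corr}$ test, so the telescoping argument, and hence summability of $\sigma_{\min}(\tfrac16-\delta)\norm{u_k}^3$ over $\mathcal{S}$, goes through unchanged. I would also check that the monotonicity of $\Phi$ cited from \cite{pei2023sequential} survives corrected steps; if it does not, I would replace it by the direct telescoping bound \eqref{eq:telescopic} together with $\mu_k\leq\mu_{\max}$ and \hyperlink{f4}{(F4)}.
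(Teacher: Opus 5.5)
Your proof is correct. It differs from the paper's in how it handles unsuccessful iterations, and there it is more careful.

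\textbf{The paper's route.} The paper combines (OR3), Lemma \ref{lem:dualvariables}(ii) and $\sigma_k\le\sigma_{\max}$ into $|\lambda_{\min}(Z_k^\top H_kZ_k)|\le\sigma_{\max}\norm{u_k}$ whenever this eigenvalue is negative. It then invokes Lemma \ref{lem:nullstepconvergence} for \emph{all} $k$. That lemma's proof only establishes $\norm{u_k}\to0$ along $\mathcal{S}$. It extends this by asserting $u_k=0$ for $k\in\mathcal{U}$, which, as you observe, is false for the computed tangential step that appears in (OR3). For instance, (OR1) forces $u_k\neq0$ whenever $\widetilde{g}_k\neq0$, and (OR3) forces $u_k\neq0$ whenever there is negative reduced curvature.

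\textbf{Your route.} You use the lemma only along $\mathcal{S}$ and transfer the curvature bound from the next successful index $s(k)$ to the frozen iterate $x_k=x_{s(k)}$. The genuinely new ingredient is the Weyl-type perturbation term $\kappa_{ch}\norm{\lambda_k-\lambda_{s(k)}}\le 2r_\lambda\kappa_{ch}\kappa_v\gamma_A^{-2}\norm{c_{s(k)}}_1$. It is needed because the inexact multipliers, and hence $H_k$, are recomputed at every unsuccessful iteration, and Theorem \ref{th:constraintsconvergence} makes it vanish.

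\textbf{Comparison.} The paper's version buys brevity but, as written, does not cover the unsuccessful iterations that the $\liminf$ over all $k$ includes. With exact multipliers $\lambda_k=\lambda_k^*$ the transfer to $s(k)$ would be immediate. With inexact ones, your perturbation bound is exactly what is required.

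\textbf{Two small remarks.} First, the run-length bound $N$ is only needed to exclude a finite $\mathcal{S}$ on a non-terminating run. Once $\mathcal{S}$ is infinite, $s(k)\ge k$ already gives $s(k)\to\infty$. Second, your check that Lemma \ref{lem:technicallemma1} survives corrected steps is right. The test $\rho_k^{corr}\ge\eta_1$ yields the same decrease $\varphi(x_k,\mu_k)-\varphi(x_{k+1},\mu_k)\ge\eta_1\Delta q_k$. Monotonicity of $\Phi$ follows directly, without appeal to the cited external lemma: $\Phi(x_{k+1},\mu_{k+1})\le\Phi(x_{k+1},\mu_k)\le\Phi(x_k,\mu_k)$, using $\mu_{k+1}\ge\mu_k$ and $f\ge f_{low}$ on successful iterates.
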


\begin{proof}
    The tangential step $u_k$ satisfies \hyperlink{or3}{(OR3)}, namely $\lambda_{\min}(P_k^\top H_k P_k)\geq -\sigma_k \norm{u_k}$.  Lemmas \ref{lem:dualvariables} and \ref{lem:sigmabound} yield 
    \begin{equation*}
        \sigma_{\max}\norm{u_k}\geq \sigma_k\norm{u_k}\geq -\lambda_{\min}(Z_k^\top H_k Z_k)=\left|\lambda_{\min}(Z_k^\top H_k Z_k)\right|
    \end{equation*}
for all iterates with $\lambda_{\min}(Z_k^\top H_k Z_k)<0$. The result is straightforward from Lemma \ref{lem:nullstepconvergence}.
\end{proof}

Note that the stronger relation $\lambda_{\min}(P_k^\top H_k P_k)=\min\left\{\lambda_{\min}(Z_k^\top H_k Z_k),0\right\}$ given by Lemma  \ref{lem:dualvariables} further yields $\liminf\limits_{k\to\infty}\lambda_{\min}(P_k^\top H_k P_k)=0$.

\subsection{Second-order Complexity}\label{sec:sec4.2}

Given second-order derivative information we are able to prove optimal (with respect to the gradient and Hessian of the Lagrangian) complexity bounds to second-order critical points. We begin with a technical result that associates the norm of the gradient of the Lagrangian at a successful iteration with the length of the step taken.

\begin{lemma}\label{lem:sosplemma1}
    Suppose that \hyperlink{f1}{(F1)}-\hyperlink{f2}{(F2)} and \hyperlink{f5}{(F5)} hold. Then, there exist constants $\psi_d>0$ and $\psi_u>0$ such that $\norm{P_{k+1}^\top g_{k+1}}\leq\psi_d\norm{d_k}^2+\psi_u\norm{u_k}^2$ for every $k\in\mathcal{S}$.
\end{lemma}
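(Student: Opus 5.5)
The plan is to exploit the variational characterization of the projected gradient. Since $P_{k+1}$ is the orthogonal projector onto $\mathcal{N}(A_{k+1})$, we have $P_{k+1}A_{k+1}^\top=0$. Hence $P_{k+1}^\top g_{k+1}=P_{k+1}^\top(g_{k+1}+A_{k+1}^\top\lambda)$ for every $\lambda\in\mathbb{R}^m$. We choose $\lambda=\lambda_k$, the multiplier used to build $H_k$. This allows the quantity at $x_{k+1}$ to be compared with the Taylor model at $x_k$, where $H_k$ is the Hessian of the Lagrangian. We first treat the case $x_{k+1}=x_k+d_k$. The correction case $x_{k+1}=x_k+d_k+w_k$ is handled at the end: by Lemma \ref{lem:dualvariables}(iii), $P(\cdot)^\top g(\cdot)$ is Lipschitz, and by Lemma \ref{lem:wbound}, $\norm{w_k}\leq\kappa_w\norm{d_k}^2$. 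So passing from $x_k+d_k$ to $x_k+d_k+w_k$ costs only an extra $\mathcal{O}(\norm{d_k}^2)$ term, which is absorbed into $\psi_d$.

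\textbf{Step 1 (Taylor expansion).} Set $r_k:=g(x_k+d_k)+A(x_k+d_k)^\top\lambda_k$. By (F5), Lipschitz continuity of $\nabla^2 f$ and $\nabla^2c_i$, and $\norm{\lambda_k}\leq\kappa_\lambda$ (Lemma \ref{lem:dualbounds}), we get
\[
r_k=g_k+A_k^\top\lambda_k+H_kd_k+e_k,\qquad \norm{e_k}\leq \tfrac12(L_{fh}+\kappa_\lambda L_{ch})\norm{d_k}^2 .
\]

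\textbf{Step 2 (split the projector).} Write $P_{k+1}=P_k+(P_{k+1}-P_k)$. For the first piece we use $P_kA_k^\top=0$, $P_ku_k=u_k$ and the formula $\nabla m_k^U(u_k)=P_k(g_k+H_kv_k)+P_kH_kP_ku_k+\sigma_k\norm{u_k}u_k$. These give
\[
P_kr_k=\nabla m_k^U(u_k)-\sigma_k\norm{u_k}u_k+P_ke_k .
\]
By (OR2) and Lemma \ref{lem:sigmabound}, this is bounded by $(1+\delta)\sigma_{\max}\norm{u_k}^2+\norm{e_k}$. This is the source of the $\psi_u\norm{u_k}^2$ term.

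\textbf{Step 3 (the hard term).} For the second piece we need a Lipschitz bound $\norm{P(x)-P(y)}\leq L_P\norm{x-y}$. This follows from $P=I-A^\top(AA^\top)^{-1}A$, (F1) and (F2), as in Appendix \ref{sec:AppB}. We then need $\norm{r_k}=\mathcal{O}(\norm{d_k})$, which is the main obstacle, since $g_k+A_k^\top\lambda_k$ itself is not small a priori. We decompose it into range and null components:
\begin{itemize}
\item The range component satisfies $\norm{(I-P_k)(g_k+A_k^\top\lambda_k)}\leq \gamma_A^{-1}\norm{A_k(g_k+A_k^\top\lambda_k)}\leq \gamma_A^{-1}r_\lambda\norm{v_k}\leq\gamma_A^{-1}r_\lambda\norm{d_k}$, by (\ref{eq:lambdas}) and the orthogonality of $v_k$ and $u_k$.
\item The null component is $P_kg_k=\nabla m_k^U(u_k)-\sigma_k\norm{u_k}u_k-P_kH_kd_k$. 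It is bounded by $(1+\delta)\sigma_{\max}\norm{u_k}^2+\kappa_H\norm{d_k}$.
\end{itemize}
Since $\norm{u_k}\leq\norm{d_k}\leq\kappa_d/\sqrt{\sigma_{\min}}$ by Lemma \ref{lem:fullstepbound}, this gives $\norm{r_k}\leq c\norm{d_k}$ for a constant $c$. Therefore $\norm{(P_{k+1}-P_k)r_k}\leq L_Pc\norm{d_k}^2$.

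Collecting Steps 2 and 3 with the correction adjustment yields the claim with $\psi_u=(1+\delta)\sigma_{\max}$ and $\psi_d$ equal to the sum of the remaining constants.
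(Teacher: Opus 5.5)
Your proof is correct, but it takes a different route from the paper's. The paper never estimates $P_{k+1}-P_k$. It introduces an auxiliary multiplier $\hat\lambda_k=-(A_kA_k^\top)^{-1}A_k(g_k+H_kd_k+\sigma_k\norm{u_k}u_k)$, chosen so that $g_k+H_kd_k+\sigma_k\norm{u_k}u_k+A_k^\top\hat\lambda_k=\nabla m_k^U(u_k)$ exactly. It then drops the projector via $\norm{P_{k+1}^\top(g_{k+1}+A_{k+1}^\top\hat\lambda_k)}\leq\norm{g_{k+1}+A_{k+1}^\top\hat\lambda_k}$ and bounds this unprojected Lagrangian gradient by Taylor remainders. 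Because $H_k$ is built with $\lambda_k$, while expanding $A_{k+1}^\top\hat\lambda_k$ produces $\hat\lambda_k$-weighted constraint Hessians, the paper must also show that $\hat\lambda_k$ is bounded and that $\norm{\hat\lambda_k-\lambda_k}=\mathcal{O}(\norm{d_k})$; this is where \eqref{eq:lambdas} enters. You keep $\lambda_k$, so your Taylor step matches $H_k$ exactly. You pay instead for the non-small range-space part of $g_k+A_k^\top\lambda_k$ through the $\mathcal{O}(\norm{d_k})$ variation of the projector, using the Lipschitz continuity of $P(\cdot)$ from Appendix \ref{sec:AppB}, and \eqref{eq:lambdas} enters there. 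Both arguments hinge on the identity $P_k(g_k+H_kd_k)=\nabla m_k^U(u_k)-\sigma_k\norm{u_k}u_k$, and both close with a product of two $\mathcal{O}(\norm{d_k})$ factors. The paper's route needs no regularity of the null-space projector and yields, as a by-product, an explicit multiplier with $\norm{\nabla_x\mathcal{L}(x_{k+1},\hat\lambda_k)}=\mathcal{O}(\norm{d_k}^2)$. Yours is somewhat shorter and avoids the bookkeeping for $\hat\lambda_k$. Your handling of the correction step also differs. You perturb from $x_k+d_k$ to $x_{k+1}$ via Lemma \ref{lem:dualvariables}(iii) and Lemma \ref{lem:wbound}, whereas the paper folds $w_k$ directly into Taylor expansions between $x_k$ and $x_{k+1}$. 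Your version evaluates quantities at the trial point $x_k+d_k$, but the paper already relies on smoothness at trial points elsewhere (e.g.\ in Lemma \ref{lem:sigmabound}), so this is not a gap.
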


\begin{proof}
    Let $u_k$ be an (in)exact solution returned by the \textit{SCP Oracle}, for some $k\in\mathcal{S}$. Then, $q_k:=P_k u_k$ is an (in)exact solution of the constrained cubic subproblem
\vspace{-0.7cm}
\begin{center}
\begin{equation}\label{eq:cubicmodel}
\begin{array}{ll}
    \min\limits_{q\in\mathbb{R}^n}\displaystyle m_k^Q(q):= f_k+(g_k+H_k v_k)^\top q+\frac{1}{2}q^\top H_k q +\frac{ \sigma_k}{3}\norm{q}^3\\
    \hspace{0.08cm}\text{s.t.}\;\; A_k q=0
    \end{array}\vspace{0.1cm}
\end{equation}
\end{center}   
Consider the optimal dual variables for \eqref{eq:cubicmodel}
\begin{equation}\label{eq:leastsquares}
    \hat{\lambda}_k:=-(A_k A_k^\top)^{-1}A_k(g_k + H_k d_k + \sigma_k \norm{q_k} q_k),
\end{equation}
and let the residuals $r_k:=\nabla_q m_k^Q(q_k)+A_k^\top \hat{\lambda}_k$. If $u_k$ is an exact solution of (\ref{eq:mku}), then $q_k$ is also an exact solution of (\ref{eq:cubicmodel}), and the KKT conditions are satisfied exactly, meaning that $r_k=0$ by definition of $\hat{\lambda}_k$. Note that, as the \textit{SCP Oracle} might return an inexact solution of the cubic subproblem, it may be the case that $\norm{r_k}>0$.\par 
We have that
\begin{equation}\label{eq:left}
    \norm{P_{k+1}^\top g_{k+1}}=\norm{P_{k+1}^\top \left(g_{k+1}+A_{k+1}^\top\hat{\lambda}_k\right)}\leq \norm{g_{k+1}+A_{k+1}^\top\hat{\lambda}_k}.
\end{equation}
In turn,
\begin{equation}\label{eq:mainlemmabound1}
\norm{g_{k+1}+A_{k+1}^\top\hat{\lambda}_k}\leq \norm{g_{k+1}+A_{k+1}^\top\hat{\lambda}_k-\nabla m_k(d_k)-A_k^\top \hat{\lambda}_k}+\norm{\nabla m_k(d_k)+A_k^\top\hat{\lambda}_k-r_k}+\norm{r_k}.
\end{equation}
We bound each norm of the right-hand-side of (\ref{eq:mainlemmabound1}), individually. From the triangle inequality, the first norm is bounded above by the sum
\begin{equation}\label{eq:mainlemmabound2}
     \norm{g_{k+1}-g_k-\nabla^2 f_k d_k}+\sum_{i=1}^{m}\left|\hat{\lambda}_k^i\right|\norm{a^{i}_{k+1}-a^{i}_k-\nabla^2 c_{k}^i d_k}+\norm{\lambda_k-\hat{\lambda}_k}\norm{\nabla^2 c_k d_k}+\sigma_k\norm{d_k}^2.
\end{equation}

From a Taylor's expansion and \hyperlink{f5}{(F5)} (and adding and subtracting the term $\nabla^2 f_k w_k$, and then invoking Lemma \ref{lem:fullstepbound} in the case $k\in\mathcal{C}$), we have 
\begin{equation}
    \norm{g_{k+1}-g_k-\nabla^2 f_k d_k}\leq \left(L_{fh}\left(1+\frac{\kappa_w^2 \kappa_d^2}{\sigma_{\min}}\right)+\kappa_{fh} \kappa_w\right)\norm{d_k}^2.
\end{equation}
Similarly, by observing that $\hat{\lambda}_k$ is bounded above in norm by some $\kappa_{\hat{\lambda}}>0$ (due to \hyperlink{f1}{(F1)}-\hyperlink{f2}{(F2)} and Lemmas \ref{lem:tangentialstepbound} and \ref{lem:normalstepbound}), and invoking Lemma \ref{lem:fullstepbound} (adding and subtracting the terms $\nabla^2 c_k^i w_k$, if necessary), we find
\begin{equation}
\sum_{i=1}^{m}\left|\hat{\lambda}_k^i\right|\norm{a^{i}_{k+1}-a^{i}_k-\nabla^2 c_{k}^i d_k}\leq \kappa_{\hat{\lambda}}\left(L_{ch}\left(1+\frac{\kappa_w^2 \kappa_d^2}{\sigma_{\min}}\right) + \kappa_{ch} \kappa_w\right)\norm{d_k}^2.
\end{equation}
For the third term of (\ref{eq:mainlemmabound2}), Lemmas \ref{lem:dualbounds} and \ref{lem:sigmabound} yield
\begin{align}
    \norm{\lambda_k-\hat{\lambda}_k}\norm{\nabla^2 c_k d_k}&\leq \kappa_{ch}\norm{\lambda_k^*-\lambda_k}\norm{d_k}+\kappa_{ch}\norm{\lambda_k^*-\hat{\lambda}_k}\norm{d_k} \nonumber \\
    &\leq \frac{\kappa_{ch} r_\lambda}{\gamma_A^2}\norm{d_k}^2+\frac{\kappa_{ch}\kappa_A}{\gamma_A^2} \left(\kappa_H+\sqrt{\sigma_{\max}}\kappa_d\right) \norm{d_k}^2.    
\end{align}
The last term of \eqref{eq:mainlemmabound2} is bounded by $\sigma_{\max}\norm{d_k}^2$ due to Lemma \ref{lem:sigmabound}.\par

By definition of the residuals $r_k$ and Lemma \ref{lem:sigmabound}, for the second norm of (\ref{eq:mainlemmabound1}) we acquire
\begin{equation}
    \norm{\nabla m_k(d_k)+A_k\hat{\lambda}_k-r_k}\leq \sigma_{\max}\norm{d_k}^2 +\sigma_{\max}\norm{u_k}^2.
\end{equation}

Lastly, Lemma \ref{lem:dualvariables} gives the important relation $r_k=P_k^\top \nabla_q m_k^Q(q_k)$. Since $q_k=P_k u_k$, where $u_k$ is an output of the \textit{SCP Oracle}, it follows that $P_k^\top \nabla_q m_k^Q(q_k)=\nabla_u m_k^U(u_k)$. As a consequence, the \hyperlink{or2}{(OR2)} criterion and Lemma \ref{lem:sigmabound} give the final bound
\begin{equation}\label{eq:right}
    \norm{r_k}=\norm{\nabla m_k^U(u_k)}\leq \delta \sigma_{\max}\norm{u_k}^2.
\end{equation}
From \eqref{eq:left}-\eqref{eq:right} we finally deduce the existence of constants $\psi_d,\psi_u>0$ such that $\norm{P_{k+1}^\top g_{k+1}}\leq\psi_d\norm{d_k}^2+\psi_u\norm{u_k}^2$.
\end{proof}

The next result shows that when the normal step dominates the tangential step, then the constraint violation at the next iteration cannot be much larger than its current value.

\begin{lemma}\label{lem:sosplemma3}
    Suppose that \hyperlink{f1}{(F1)}-\hyperlink{f5}{(F5)} hold and let an iterate $k\in\mathcal{S}$. If $\norm{u_k}<\norm{v_k}$, then $\norm{c_{k+1}}_1<\kappa_{cc}\norm{c_k}_1$ for some $\kappa_{cc}>1$.
\end{lemma}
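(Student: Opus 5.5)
The idea is to expand $c_{k+1}$ by Taylor's theorem around $x_k$, once for each of the two ways a successful step can be formed, and to use the hypothesis $\norm{u_k}<\norm{v_k}$ to turn every quadratic (or higher) power of $\norm{d_k}$ into a multiple of $\norm{c_k}_1$. First note that $\norm{d_k}\le\norm{u_k}+\norm{v_k}<2\norm{v_k}$. By Lemma \ref{lem:normalstepbound}, $\norm{v_k}^2\le\kappa_{vs}\norm{c_k}_1$, so
\[
\norm{d_k}^2<4\kappa_{vs}\norm{c_k}_1 .
\]
In particular $c_k\neq 0$, since otherwise $v_k=0$ and the hypothesis could not hold. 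The same bound also controls all higher powers of $\norm{d_k}$: since $\norm{d_k}\le\kappa_d/\sqrt{\sigma_{\min}}$ by Lemma \ref{lem:fullstepbound}, we get $\norm{d_k}^3\le(\kappa_d/\sqrt{\sigma_{\min}})\norm{d_k}^2$, and similarly $\norm{d_k}^4$ is bounded by a constant times $\norm{d_k}^2$.

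\textbf{Case 1: $x_{k+1}=x_k+d_k$.} Lemma \ref{lem:lipcon} (as used in Lemma \ref{lem:ratiobound}) gives
\[
\norm{c(x_k+d_k)}_1\le\norm{c_k+A_kd_k}_1+\tfrac{L_A}{2}\norm{d_k}^2 .
\]
Since $u_k\in\mathcal{N}(A_k)$, we have $A_kd_k=\beta_kA_kv_k^c$. The computation in \eqref{eq:oops3} then gives
\[
\norm{c_k+A_kd_k}_1\le(1-\beta_k(1-r_v))\norm{c_k}_1\le\norm{c_k}_1 .
\]
Combining the two bounds with the estimate on $\norm{d_k}^2$ yields $\norm{c_{k+1}}_1\le(1+2L_A\kappa_{vs})\norm{c_k}_1$.

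\textbf{Case 2: $k\in\mathcal{C}$ and $x_{k+1}=x_k+d_k+w_k$.} Here we use the Taylor expansion \eqref{eq:4.15} from the proof of Lemma \ref{lem:sigmabound}, together with \eqref{eq:w}. These give
\[
\norm{c(x_k+d_k+w_k)}\le\norm{c(x_k+d_k)+A_kw_k}+L_A\norm{w_k}^2\le r_w\norm{d_k}^3+L_A\norm{w_k}^2 .
\]
We then insert $\norm{w_k}\le\kappa_w\norm{d_k}^2$ from Lemma \ref{lem:wbound}. This makes the right-hand side $O(\norm{d_k}^3+\norm{d_k}^4)$, which by the first step is at most a constant times $\norm{c_k}_1$. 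The constant also absorbs the factor $\sqrt{m}$ incurred in passing from the $\ell_2$ norm to the $\ell_1$ norm. Since $L_A$ was defined via sums of the individual Lipschitz constants, that factor may in fact not be needed.

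Finally, set $\kappa_{cc}$ to be the larger of the two constants plus one. This guarantees $\kappa_{cc}>1$ and makes the inequality strict, which holds because $c_k\neq 0$. The main point needing care is Case 2. We must confirm that the correction-step bounds of Lemma \ref{lem:wbound} apply, which requires $\beta_k=1$ on $\mathcal{K}$; alternatively we can avoid \eqref{eq:w} and simply bound $\norm{c(x_k+d_k+w_k)}_1\le\norm{c(x_k+d_k)}_1+L_c\norm{w_k}$ using Case 1. We must also track the norm equivalences consistently.
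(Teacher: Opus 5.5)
Your proposal is correct and follows essentially the same route as the paper: you split on whether the accepted point is $x_k+d_k$ (Lemma~\ref{lem:lipcon} plus the computation in \eqref{eq:oops3}) or $x_k+d_k+w_k$ (\eqref{eq:4.15}, \eqref{eq:w} and Lemma~\ref{lem:wbound}), and you control $\norm{d_k}$ by $\norm{v_k}$; the paper uses the orthogonality of $u_k$ and $v_k$ to get the factor $\sqrt{2}$ where you use $2$. One small slip, inherited from \eqref{eq:4.16}: since $\overline{\xi_k}$ lies between $x_k+d_k$ and $x_k+d_k+w_k$, the remainder is $L_A(\norm{d_k}+\norm{w_k})\norm{w_k}$ rather than $L_A\norm{w_k}^2$, but the extra piece is at most $L_A\kappa_w\norm{d_k}^3$ and is absorbed exactly as you absorb $r_w\norm{d_k}^3$.
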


\begin{proof}
    Suppose $k\in\mathcal{C}$. Then, (\ref{eq:4.16}) holds, hence one has (ignoring the penalty parameter terms) $\norm{c_{k+1}}_1\leq \frac{\kappa_A r_w}{\gamma_A}\norm{d_k}^3+ L_A\norm{w_k}^2$. Lemma \ref{lem:wbound} guarantees the existence of some constant $\gamma_c>0$ such that $\norm{c_{k+1}}_1\leq\gamma_c\norm{d_k}^3$. Therefore,
\begin{equation}    
\norm{c_{k+1}}_1\leq\gamma_c\frac{\kappa_d^2}{\sigma_{\min}}\norm{d_k}<\sqrt{2}\gamma_c\frac{\kappa_d^2}{\sigma_{\min}}\norm{v_k}\leq \sqrt{2}\gamma_c\frac{\kappa_d^2}{\sigma_{\min}}\kappa_v\norm{c_k}_1.
\end{equation}

If $k\not\in\mathcal{C}$, then no correction step is taken, so by Lemma \ref{lem:lipcon} and \eqref{eq:normalstep3},
\begin{equation*}
    \|c(x_{k+1})\|_1 \leq \|c_k + A_k d_k\|_1 + \frac{L_A}{2}\|d_k\|^2 <  \|c_k\|_1 + L_A \kappa_{vs} \|c_k\|_1 = (1 + L_A \kappa_{vs}) \|c_k\|_1.
\end{equation*}
The result follows for $\kappa_{cc}:=\max\left\{ \sqrt{2}\frac{\gamma_c\kappa_d^2}{\sigma_{\min}\kappa_v},1 + L_A \kappa_{vs} \right\}$.
\end{proof}

We are finally ready to present our complexity result for first-order critical points, establishing the best known bounds for equality constrained optimization:

\begin{theorem}\label{th:sospcomplexity}
    Suppose that \hyperlink{f1}{(F1)}-\hyperlink{f5}{(F5)} hold. Then, for any $\epsilon_g,\epsilon_c\in(0,1)$, Algorithm \ref{alg:scp} will reach a point satisfying $\|\nabla_x\mathcal{L}(x_k,\lambda_k)\| \leq \epsilon_g$, and $\|c_k\|_1 \leq \epsilon_c$ in at most
    \begin{equation*}
        K_\epsilon:=\ceil[\Big]{\kappa_t\max\left\{\epsilon_g^{-3/2},\epsilon_c^{-1}\right\}}+1
    \end{equation*}
    successful iterations, where
    \begin{equation*}
        \kappa_t:=\frac{f(x_0)-f_{low}+2\kappa_c\mu_{\max}}{\eta_1\min\limits_{i=1,2,3}\alpha_i},
    \end{equation*}
for some positive constants $\alpha_1,\alpha_2,\alpha_3$.  Further, the algorithm will reach such a point in at most
\begin{equation*}
    \widetilde{K}_\epsilon:=\ceil[\Bigg]{\displaystyle\frac{ \widetilde{\kappa_t}}{\log\gamma_1}\max\left\{\epsilon_g^{-3/2},\epsilon_c^{-1}\right\}}+1
\end{equation*}
total iterations, where
\begin{equation*}
    \widetilde{\kappa_t}:=(\log\gamma_1-\log\gamma_3)\kappa_t+\frac{\sigma_{\max}}{\sigma_0}.
\end{equation*}
\end{theorem}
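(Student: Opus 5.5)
The plan is to mimic the proof of Theorem \ref{th:fospcomplexity}. The difference is that each successful iteration taken before termination should now reduce the merit function by at least a constant times $\min\{\epsilon_g^{3/2},\epsilon_c\}$, rather than $\min\{\epsilon_g^2,\epsilon_c\}$. The key ingredient is Lemma \ref{lem:sosplemma1}, which controls $\norm{P_{k+1}^\top g_{k+1}}$ by $\norm{d_k}^2$, together with the uniform bound $\sigma_k\le\sigma_{\max}$ from Lemma \ref{lem:sigmabound}.

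\textbf{Setup.} Let $\mathcal{S}_\epsilon$ be the set of successful $k$ such that $x_{k+1}$ is not yet an $(\epsilon_g,\epsilon_c)$-FOSP. The goal is to show $\Delta q_k\ge \min_i\alpha_i\min\{\epsilon_g^{3/2},\epsilon_c\}$ for every $k\in\mathcal{S}_\epsilon$, except possibly one index (this accounts for the ``$+1$''). Fix $k\in\mathcal{S}_\epsilon$; then $\norm{c_{k+1}}_1>\epsilon_c$ or $\norm{\nabla_x\mathcal{L}(x_{k+1},\lambda_{k+1})}>\epsilon_g$. I split into three cases.

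\textbf{Case 1: $\norm{c_k}_1>\widetilde\kappa\min\{\epsilon_g^{3/2},\epsilon_c\}$.} Lemma \ref{lem:reduction} and \eqref{eq:beta} give $\Delta q_k\ge\tau\mu_{-1}\beta_k\norm{c_k}_1$. Since $\sigma_k\le\sigma_{\max}$, we have $\beta_k\ge\min\{1,\theta/(\kappa_v\kappa_c\sqrt{\sigma_{\max}})\}$, so this yields $\alpha_1$.

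\textbf{Case 2: $\norm{c_k}_1$ small and $\norm{u_k}\ge\norm{v_k}$.} Then $\norm{d_k}\le2\norm{u_k}$. If the violation at $x_{k+1}$ is the cause, it can only be large if $\norm{c_k}_1$ was large or $\norm{d_k}$ is large. Indeed, Lemma \ref{lem:lipcon}, \eqref{eq:normalstep3} and (when $k\in\mathcal{C}$) \eqref{eq:4.16} give $\norm{c_{k+1}}_1\lesssim\norm{c_k}_1+\norm{d_k}^2$, so $\norm{u_k}^2\gtrsim\epsilon_c$, and in particular $\norm{u_k}^3\gtrsim\epsilon_c^{3/2}\ge\epsilon_c$ up to constants as $\epsilon_c<1$. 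If instead the gradient is the cause, I bound $\norm{\nabla_x\mathcal{L}(x_{k+1},\lambda_{k+1})}$ by $\norm{P_{k+1}^\top g_{k+1}}$ plus $\mathcal{O}(\norm{c_{k+1}}_1)$ via \eqref{eq:gradientdiff}. With $\widetilde\kappa$ chosen small enough, Lemma \ref{lem:sosplemma1} then yields $\epsilon_g/2\le(4\psi_d+\psi_u)\norm{u_k}^2$. In either sub-case Lemma \ref{lem:deltanullstepbounds} gives $\Delta q_k\ge(\tfrac16-\delta)\sigma_{\min}\norm{u_k}^3\ge\alpha_2\min\{\epsilon_g^{3/2},\epsilon_c\}$.

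\textbf{Case 3: $\norm{c_k}_1$ small and $\norm{u_k}<\norm{v_k}$.} This is the main obstacle, because the step is dominated by the normal component. I would use Lemma \ref{lem:sosplemma3}, which gives $\norm{c_{k+1}}_1<\kappa_{cc}\norm{c_k}_1$, and combine it with Lemma \ref{lem:normalstepbound}, $\norm{d_k}^2\le4\norm{v_k}^2\le4\kappa_{vs}\norm{c_k}_1$. Lemma \ref{lem:sosplemma1} then bounds the gradient at $x_{k+1}$ by $\mathcal{O}(\norm{c_k}_1)$. So if $\widetilde\kappa$ is chosen small relative to $\kappa_{cc}$, $\psi_d$, $\psi_u$ and $\kappa_{vs}$, $x_{k+1}$ is already an $(\epsilon_g,\epsilon_c)$-FOSP. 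The subtle point is the threshold: I expect to need a two-level scheme. Case 1 uses the threshold $\widetilde\kappa\min\{\epsilon_g^{3/2},\epsilon_c\}$, while Case 3 needs $\norm{c_k}_1\lesssim\min\{\epsilon_g,\epsilon_c\}$. Since $\epsilon_g^{3/2}\le\epsilon_g$, the first threshold is the smaller one, so the threshold $\widetilde\kappa\min\{\epsilon_g^{3/2},\epsilon_c\}$ suffices for both. Consequently Case 3 can only occur at the final index, contributing the $+1$ to $K_\epsilon$.

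\textbf{Counting.} Summing the reductions over $\mathcal{S}_\epsilon$ with the telescoping bound \eqref{eq:must} gives $|\mathcal{S}_\epsilon|\le\kappa_t\max\{\epsilon_g^{-3/2},\epsilon_c^{-1}\}+1$. For total iterations, apply Lemma \ref{lem:cardinalitybound} with $\widetilde\sigma=\sigma_{\max}$, which gives $|\mathcal{U}_k|\le(\log\sigma_{\max}-\log\sigma_0-|\mathcal{S}_k|\log\gamma_3)/\log\gamma_1$. Using $\log x\le x$ and $\epsilon<1$ to absorb $\log(\sigma_{\max}/\sigma_0)$ into $(\sigma_{\max}/\sigma_0)\max\{\epsilon_g^{-3/2},\epsilon_c^{-1}\}$ then gives $\widetilde K_\epsilon$.
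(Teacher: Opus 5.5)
There is a genuine gap in your Case 2. It occurs in the sub-case where $k\in\mathcal{S}_\epsilon$ only because $\norm{c_{k+1}}_1>\epsilon_c$, while $\norm{c_k}_1\le\widetilde\kappa\min\{\epsilon_g^{3/2},\epsilon_c\}$ and $\norm{u_k}\ge\norm{v_k}$. From $\norm{c_{k+1}}_1\lesssim\norm{c_k}_1+\norm{d_k}^2$ you correctly get $\norm{u_k}^2\gtrsim\epsilon_c$. Lemma \ref{lem:deltanullstepbounds} then gives $\Delta q_k\gtrsim\sigma_{\min}\epsilon_c^{3/2}$. But for $\epsilon_c\in(0,1)$ we have $\epsilon_c^{3/2}\le\epsilon_c$, not $\ge$, so your inequality goes the wrong way. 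Take $\epsilon_g$ fixed and $\epsilon_c\to0$: the required decrease is $\min\{\epsilon_g^{3/2},\epsilon_c\}=\epsilon_c$, and decreases of size $\epsilon_c^{3/2}$ only give an $\mathcal{O}(\epsilon_c^{-3/2})$ count. You cannot fix this at iteration $k$ itself. A tangential step of length of order $\epsilon_c^{1/2}$ only certifies a decrease of order $\sigma_k\norm{u_k}^3$ with $\sigma_k\ge\sigma_{\min}$. Meanwhile $\norm{c_k}_1$ is too small for the feasibility term in Lemma \ref{lem:reduction} to help.

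The missing idea, which is the paper's second subcase, is to charge such an iteration to the next successful iteration $k+l$ with $l\ge1$.
\begin{itemize}
\item Iterations $k+1,\dots,k+l-1$ are unsuccessful, so $x_{k+l}=x_{k+1}$ and $\norm{c_{k+l}}_1=\norm{c_{k+1}}_1>\epsilon_c$. Your Case 1 argument applied at $k+l$ then gives $\Delta q_{k+l}\ge\alpha_1\min\{\epsilon_g^{3/2},\epsilon_c\}$. This comes through the term $\tau\mu_{k+l}\beta_{k+l}\norm{c_{k+l}}_1$, which is linear in the violation.
\item The index $k+l$ exists, because Lemma \ref{lem:sigmabound} rules out infinitely many consecutive unsuccessful iterations. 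It precedes termination, since $x_{k+l}$ is not yet an FOSP.
\item Because $k+l$ is successful, its decrease enters the telescoping sum \eqref{eq:must} whether or not $k+l\in\mathcal{S}_\epsilon$.
\item Each successful iteration is charged at most twice: once by itself and once by its predecessor in $\mathcal{S}$. The count therefore loses only a factor $2$, which is why the paper takes $\alpha_i=\widetilde\alpha_i/2$.
\end{itemize}
With this added, the rest of your outline matches the paper: Case 1, Case 3 via Lemmas \ref{lem:sosplemma3} and \ref{lem:sosplemma1}, and the unsuccessful-iteration count. One smaller fix in the gradient sub-case of Case 2: bound the \eqref{eq:gradientdiff} term using $\norm{c_{k+1}}_1\lesssim\norm{c_k}_1+\norm{u_k}^2$. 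Choosing $\widetilde\kappa$ small does not by itself control $\norm{c_{k+1}}_1$ relative to $\epsilon_g$; this only changes constants.
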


\begin{proof}
    To show the first result, it suffices to bound the number of successful iterations until we find a point satisfying
    \begin{equation*}
        \min\left\{\norm{\nabla_x\mathcal{L}(x_{k+1},\lambda_{k+1})},\norm{\nabla_x\mathcal{L}(x_k,\lambda_k)}\right\}\leq\epsilon_g \quad \text{and} \quad \max\left\{\norm{c_k}_1,\norm{c_{k+1}}_1\right\}\leq\epsilon_c.
    \end{equation*}
    To this end, define an index set of iterates that land away from approximate local minima,
    \begin{equation*}
      \mathcal{F}_\epsilon:=\Big\{ k\in\mathcal{S}:\min\left\{\norm{\nabla_x\mathcal{L}(x_{k+1},\lambda_{k+1})},\norm{\nabla_x\mathcal{L}(x_k,\lambda_k)}\right\}>\epsilon_g \text{\;\;or\;} \norm{c_k}_1>\hat{\kappa}\epsilon_c\text{\;or\;}\norm{c_{k+1}}_1>\bar{\kappa}\epsilon_c \Big\},  
    \end{equation*}
    for  $\hat{\kappa}:=\Big[4\max\left\{1,(\psi_d+\psi_u)\kappa_{vs},\kappa_{cc},\kappa_{cc}\kappa_A r_{\lambda}\kappa_v\gamma_A^{-2},\kappa_A\kappa_c r_v\right\}\Big]^{-1}$, $\bar{\kappa}:=[2\max\{1,\kappa_A r_{\lambda}\kappa_v \gamma_A^{-2}\}]^{-1}$. Fix some $k\in\mathcal{F}_\epsilon$. We consider two cases for that iteration.\par
    \underline{\textit{1st case}} $\left(\norm{c_k}_1>\hat{\kappa}\min\left\{\epsilon_g^{3/2},\epsilon_c\right\}\right)$: Then,
    \begin{equation}\label{eq:complexitybound1}
        \Delta q_k\geq\tau\mu_k\norm{c_k}_1\min\left\{1,\frac{\theta}{\norm{v_k^c}\sqrt{\sigma_k}}\right\}>\widetilde{\alpha}_1\min\left\{\epsilon_g^{3/2},\epsilon_c\right\},
\end{equation}
where $\widetilde{\alpha}_1:=\tau\mu_{-1}\hat{\kappa}\min\left\{1,\frac{\theta}{\kappa_v\kappa_c\sqrt{\sigma_{\max}}}\right\}$.\par
\underline{\textit{2nd case}} $\left(\norm{c_k}_1\leq\hat{\kappa}\min\left\{\epsilon_g^{3/2},\epsilon_c\right\}\right)$: We first claim that we must have $\norm{u_k}\geq\norm{v_k}$. Suppose otherwise. We invoke Lemma \ref{lem:sosplemma3} to obtain $\norm{c_{k+1}}_1<\kappa_{cc}\norm{c_k}_1\leq\bar{\kappa}\epsilon_c$. Hence, it must be the case that $\min\left\{\norm{\nabla_x\mathcal{L}(x_{k+1},\lambda_{k+1})},\norm{\nabla_x\mathcal{L}(x_k,\lambda_k)}\right\}>\epsilon_g$,  since $k\in\mathcal{F}_\epsilon$. Observe that Lemmas \ref{lem:dualvariables}, \ref{lem:dualbounds} yield
\begin{align}
    \norm{\nabla_x\mathcal{L}(x_{k+1},\lambda_{k+1})}&\leq \norm{\nabla_x\mathcal{L}(x_{k+1},\lambda_{k+1}^*)-\nabla_x\mathcal{L}(x_{k+1},\lambda_{k+1})}+\norm{\nabla_x\mathcal{L}(x_{k+1},\lambda_{k+1}^*)} \nonumber \\
    &\leq \kappa_A\norm{\lambda_{k+1}^*-\lambda_{k+1}}+\norm{P_{k+1}^\top g_{k+1}} \nonumber \\
    &\leq \frac{\kappa_A r_{\lambda}\kappa_v\kappa_{cc}}{\gamma_A^2}\norm{c_k}_1+\norm{P_{k+1}^\top g_{k+1}}. \label{eq:cooll}
\end{align}
Rearranging \eqref{eq:cooll} and using $\epsilon_g \in (0,1)$ implies
\begin{equation}\label{eq:cool2}
    \norm{P_{k+1}^\top g_{k+1}}>\epsilon_g-\frac{\kappa_A r_{\lambda}\kappa_v\kappa_{cc}\hat{\kappa}}{\gamma_A^2}\epsilon_g^2>\epsilon_g/2.
\end{equation}
However, from Lemma \ref{lem:sosplemma1} we find
\begin{align}
    \norm{P_{k+1}^\top g_{k+1}}&\leq \psi_d\norm{d_k}^2+\psi_u\norm{u_k}^2 \nonumber \\
    &\leq 2(\psi_d+\psi_u) \norm{v_k}^2 \nonumber \\
    & \leq 2(\psi_d+\psi_u)\kappa_{vs}\norm{c_k}_1\nonumber \\
    &\leq \epsilon_g^{3/2}/2. \label{eq:cool3}
\end{align}
The relations \eqref{eq:cool2} and \eqref{eq:cool3} contradict one another, since $\epsilon_g\in(0,1)$.\par
Hence, we have $\norm{u_k}\geq\norm{v_k}$ whenever $\norm{c_k}_1\leq\hat{\kappa}\min\left\{\epsilon_g^{3/2},\epsilon_c\right\}$. We proceed by considering two sub-cases about the next iterate:\par

\textit{\underline{1st subcase}} $\left(\norm{c_{k+1}}_1\leq\bar{\kappa}\min\left\{\epsilon_g^{3/2},\epsilon_c\right\}\right)$:
 Here, $\min\left\{\norm{\nabla_x\mathcal{L}(x_{k+1},\lambda_{k+1})},\norm{\nabla_x\mathcal{L}(x_k,\lambda_k)}\right\}>\epsilon_g$. Coming back to the result of Lemma \ref{lem:sosplemma1}, we find
\begin{equation}\label{eq:superbound}
    \norm{P_{k+1}^\top g_{k+1}}\leq 2(\psi_d+\psi_u)\norm{u_k}^2 .
\end{equation}
Combining \eqref{eq:superbound} with the second inequality of \eqref{eq:cooll} and Lemma \ref{lem:dualbounds}, we obtain
\begin{align}
    \norm{\nabla_x\mathcal{L}(x_{k+1},\lambda_{k+1})}&\leq \kappa_A \norm{\lambda^*_{k+1}-\lambda_{k+1}}+\norm{P_{k+1}^\top g_{k+1}} \nonumber \\
    &\leq \frac{\kappa_A r_{\lambda}\kappa_v}{\gamma_A^2}\norm{c_{k+1}}_1+\norm{P_{k+1}^\top g_{k+1}} \nonumber \\
    &\leq \epsilon_g/2+2(\psi_d+\psi_u)\norm{u_k}^2,
\end{align}
By invoking Lemma \ref{lem:deltanullstepbounds}, the above relation gives
\begin{equation}\label{eq:complexitybound2}
    \Delta q_k\geq \Delta m_k^U(u_k)\geq \left(\frac{1}{6}-\delta\right)\sigma_k\norm{u_k}^3\geq\widetilde{\alpha}_2\epsilon_g^{3/2}\geq\widetilde{\alpha}_2 \min\left\{\epsilon_g^{3/2},\epsilon_c\right\},
\end{equation}
where $\widetilde{\alpha}_2:=\left(\frac{1}{6}-\delta\right)      \frac{\sigma_{\min}}{8\left(\psi_d+\psi_u\right)^{3/2}}$. 

\textit{\underline{2nd subcase}} $\left(\norm{c_{k+1}}_1>\bar{\kappa}\min\left\{\epsilon_g^{3/2},\epsilon_c\right\}\right)$: Let $k+l$ represent the first iteration after $k$ that belongs to $\mathcal{S}$, where $l\geq 1$. Then, $\norm{c_{k+l}}_1=\norm{c_{k+1}}_1>\bar{\kappa}\min\left\{\epsilon_g^{3/2},\epsilon_c\right\}$, hence $k+l\in\mathcal{F}_{\epsilon}$. In fact, $k+l$ is the first iterate after $k$ that belongs to $\mathcal{F}_{\epsilon}$. In that case, we find 
\begin{equation}\label{eq:complexitybound4}
        \Delta q_{k+l}\geq\tau\mu_{k+l}\norm{c_{k+l}}_1\min\left\{1,\frac{\theta}{\norm{v_{k+l}^c}\sqrt{\sigma_{k+l}}}\right\}>\widetilde{\alpha}_1\min\left\{\epsilon_g^{3/2},\epsilon_c\right\}
\end{equation}
with $\widetilde{\alpha}_1$ as in (\ref{eq:complexitybound1}), because $\hat{\kappa}<\bar{\kappa}$ due to $\kappa_{cc}>1$.\par
Combining (\ref{eq:complexitybound1}), (\ref{eq:complexitybound2}) and (\ref{eq:complexitybound4}), we conclude that for every $k\in\mathcal{F}_\epsilon$ at least one of the following holds:
\begin{subequations}
    \begin{align}
        \varphi(x_k,\mu_k)-\varphi(x_{k+1},\mu_k)&>\eta_1\min\limits_{i=1,2,3}\{\widetilde{\alpha}_i\}\min\left\{\epsilon_g^{3/2},\epsilon_c\right\}; \label{eq:summable} \\
        \varphi(x_{k+l},\mu_{k+l})-\varphi(x_{k+l+1},\mu_{k+l+1})&>\eta_1\min\limits_{i=1,2,3}\{\widetilde{\alpha}_i\}\min\left\{\epsilon_g^{3/2},\epsilon_c\right\},
    \end{align}
\end{subequations}
where $k+l$ is the first iterate after $k$ that belongs to $\mathcal{F}_{\epsilon}$, for $l\geq 1$. In other words, at least half iterates in $\mathcal{F}_\epsilon$ will give a strictly positive reduction of order $\min\left\{\epsilon_g^{3/2},\epsilon_c\right\}$. By this dichotomy and the bound $\varphi(x_k,\mu_k)\geq \varphi(x_{k+1},\mu_k)$ for every $k\in\mathcal{S}$, summing \eqref{eq:summable} over $\mathcal{F}_\epsilon$  gives
\begin{equation}
    f(x_0)-f_{low}+2\kappa_c\mu_{\max}\geq \frac{1}{2}|\mathcal{F}_\epsilon|\eta_1\min\limits_{i=1,2,3}\{\widetilde{\alpha}_i\}\min\left\{\epsilon_g^{3/2},\epsilon_c\right\},
\end{equation}
in similar manner to \eqref{eq:must}. We finally get
\begin{equation}\label{eq:fospcomplexitybound}
    |\mathcal{F}_\epsilon|\leq \ceil[\Bigg]{\frac{f(x_0)-f_{low}+2\kappa_c\mu_{\max}}{\eta_1\min\limits_{i=1,2,3}\{\alpha_i\}}\max\left\{\epsilon_g^{-3/2},\epsilon_c^{-1}\right\}},
\end{equation}
where $\alpha_i:=\widetilde{\alpha}_i/2$ for every $i\in\{1,2,3\}$.

Now, suppose $k_{j_F}<\infty$ is the last iterate in $\mathcal{F}_{\epsilon}$. Since $k_{j_F}+1$ might be successful, we infer that the algorithm will reach an approximate first-order stationary point in at most $K_\epsilon+1$ successful iterations, due to $\hat{\kappa},\bar{\kappa}<1$.\par

In addition, we get $k_{j_F}\leq \widetilde{K}_{\epsilon}$ verbatim Theorem \ref{th:fospcomplexity}, via the use of (\ref{eq:fospcomplexitybound}), Lemma \ref{lem:cardinalitybound}, the bound $\sigma_k\leq \sigma_{\max}$ and the inequality $\log x<x$ for $x>0$. The final total iteration complexity bound directly follows.
\end{proof}

\begin{remark}\label{rem:criterion}
    Going back to the discussion of Section \ref{sec:sec2}, one can replace the termination criterion \hyperlink{or2}{(OR2)} with \hyperlink{or2prime}{(OR2')}, the latter of which commonly appears in related works (e.g., \cite{cartis2013evaluation}). The bound given by Lemma \ref{lem:sosplemma1} will then change to ``$\norm{P_{k+1}^\top g_{k+1}}\leq\psi_d\norm{d_k}^2+\psi_u\norm{u_k}^2+\kappa_{\theta}\min\{1,\norm{u_k}\}\norm{\widetilde{g_k}}$". In turn, one can utilize this to get an iteration complexity threshold of the same order, with only a few suitable changes in the proof of the above Theorem (in fact, it suffices to compare the values $\norm{u_k}$ and $\norm{\widetilde{g_k}}$). Note that Lemma \ref{lem:deltanullstepbounds} is a crucial tool that directly makes use of the termination criterion \hyperlink{or2}{(OR2)}. Upon the substitute of \hyperlink{or2prime}{(OR2')}, this Lemma boils down to the model reduction guaranteed by \cite[Lemma 3.3]{cartis2011adaptive}, which is sufficient for the proof of the main results in this section.
\end{remark}

Showing optimal iteration complexity bounds with respect to second-order stationarity is an easier task, and follows directly from Lemmas \ref{lem:cardinalitybound}, \ref{lem:deltanullstepbounds} and \ref{lem:sigmabound}.

\begin{theorem}\label{th:sospcomplexity2}
    Suppose that \hyperlink{f1}{(F1)}-\hyperlink{f5}{(F5)} hold. Then, for any $\epsilon_H>0$, Algorithm \ref{alg:scp} will reach a point satisfying $\lambda_{\min}(Z_k^\top H_k Z_k)\geq-\epsilon_H$ in at most
    \begin{equation*}
        L_\epsilon:=\ceil[\Big]{\kappa_r\epsilon_H^{-3}}
    \end{equation*}
    successful iterations, where
    \begin{equation*}
        \kappa_r:=\frac{f(x_0)-f_{low}+2\kappa_c\mu_{\max}}{\eta_1 \alpha_4},
    \end{equation*}
for some positive constant $\alpha_4$. Further, the algorithm will reach such a point in at most
\begin{equation*}
\widetilde{L}_\epsilon:=\ceil[\Bigg]{\displaystyle\frac{\widetilde{\kappa}_r}{\log\gamma_1}\epsilon_H^{-3}}
\end{equation*}
total iterations, where
\begin{equation*}
    \widetilde{\kappa}_r:=(\log\gamma_1-\log\gamma_3)\kappa_r+\frac{\sigma_{\max}}{\sigma_0}.
\end{equation*}
\end{theorem}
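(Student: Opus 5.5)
The plan mirrors the proof of Theorem \ref{th:fospcomplexity}, but the per-iteration decrease is now driven by negative curvature through \hyperlink{or3}{(OR3)} and Lemma \ref{lem:deltanullstepbounds}. First I define the set of successful iterations that are not yet approximately second-order stationary,
\begin{equation*}
\mathcal{H}_\epsilon:=\left\{k\in\mathcal{S}:\lambda_{\min}(Z_k^\top H_k Z_k)<-\epsilon_H\right\}.
\end{equation*}
For $k\in\mathcal{H}_\epsilon$, Lemma \ref{lem:dualvariables}(ii) gives $\lambda_{\min}(P_k^\top H_k P_k)=\lambda_{\min}(Z_k^\top H_k Z_k)<-\epsilon_H$. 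Then \hyperlink{or3}{(OR3)} together with Lemma \ref{lem:sigmabound} yields
\begin{equation*}
\epsilon_H<\sigma_k\norm{u_k}\leq\sigma_{\max}\norm{u_k},
\end{equation*}
so $\norm{u_k}>\epsilon_H/\sigma_{\max}$.

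Next I convert this into model decrease. Lemma \ref{lem:reduction} gives $\Delta q_k\geq\Delta m_k^U(u_k)$, and Lemma \ref{lem:deltanullstepbounds} with $\sigma_k\geq\sigma_{\min}$ gives
\begin{equation*}
\Delta q_k\geq\left(\tfrac16-\delta\right)\sigma_{\min}\norm{u_k}^3>\alpha_4\epsilon_H^3,\qquad \alpha_4:=\left(\tfrac16-\delta\right)\frac{\sigma_{\min}}{\sigma_{\max}^3}.
\end{equation*}
An alternative bound uses $\sigma_k\norm{u_k}^3=(\sigma_k\norm{u_k})^3/\sigma_k^2\geq\epsilon_H^3/\sigma_{\max}^2$, which gives an even better constant; either works. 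Since $k$ is successful, the actual merit reduction satisfies $\varphi(x_k,\mu_k)-\varphi(x_{k+1},\mu_k)\geq\eta_1\Delta q_k>\eta_1\alpha_4\epsilon_H^3$. This holds whether or not a correction step was taken, because both $\rho_k$ and $\rho_k^{corr}$ are normalized by $\Delta q_k$.

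Summing over $\mathcal{H}_\epsilon$ and using the telescoping bound \eqref{eq:telescopic}/\eqref{eq:must} bounds the total reduction by $f(x_0)-f_{low}+2\kappa_c\mu_{\max}$. This uses Lemma \ref{lem:meritparameterbound} and the monotonicity of the merit parameters, and also covers the case $0\notin\mathcal{S}$ exactly as in Theorem \ref{th:fospcomplexity}. It follows that $|\mathcal{H}_\epsilon|\leq\ceil{\kappa_r\epsilon_H^{-3}}$.

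The total iteration count then follows from Lemma \ref{lem:cardinalitybound} with $\widetilde\sigma=\sigma_{\max}$. This gives $|\mathcal{U}_k|\leq(\log\sigma_{\max}-\log\sigma_0-|\mathcal{S}_k|\log\gamma_3)/\log\gamma_1+1$, and adding $|\mathcal{S}_k|$ and using $\log x\leq x$ produces $\widetilde{\kappa}_r\epsilon_H^{-3}/\log\gamma_1$, since $\epsilon_H^{-3}\geq1$ when $\epsilon_H\leq1$.

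The main subtlety is bookkeeping rather than analysis. Unsuccessful iterations keep $x_k$ unchanged, but $\sigma_k$ grows and $u_k$ changes; the curvature test depends only on $x_k$ and $\lambda_k$, however, and $\lambda_k$ can also change since it is recomputed. I would therefore state the count over successful iterations where the point fails the test, and argue that the first successful iterate outside $\mathcal{H}_\epsilon$, or the iterate following the last element of $\mathcal{H}_\epsilon$, satisfies the criterion. I would also need to handle the case where $\epsilon_H>1$ in the total-iteration bound, either by absorbing it into constants or by assuming $\epsilon_H\in(0,1)$ as in the earlier theorems.
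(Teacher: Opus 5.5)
Your proposal is correct and follows the paper's proof essentially step for step. You restrict to successful iterations with $\lambda_{\min}(Z_k^\top H_k Z_k)<-\epsilon_H$, combine \hyperlink{or3}{(OR3)}, Lemma \ref{lem:dualvariables}(ii), Lemma \ref{lem:sigmabound} and Lemma \ref{lem:deltanullstepbounds} to get an $\mathcal{O}(\epsilon_H^3)$ merit decrease per such iteration, telescope as in \eqref{eq:must}, and finish with Lemma \ref{lem:cardinalitybound}. Your constant $(\tfrac16-\delta)\sigma_{\min}/\sigma_{\max}^3$ is the correct one, since the paper's displayed chain drops the $-\delta$ from Lemma \ref{lem:deltanullstepbounds}, and your caveat that the $\log x\le x$ step in the total-iteration bound needs $\epsilon_H\le 1$ is a fair point the paper glosses over.
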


\begin{proof}
    Define the set $\mathcal{C}_\epsilon:=\{k\in\mathcal{S}:\lambda_{\min}(Z_k^\top H_k Z_k)<-\epsilon_H\}$. Then, by the computational properties of the tangential step $u_k$, the bound $\lambda_{\min}(Z_k^\top H_k Z_k)\geq \lambda_{\min}(P_k^\top H_k P_k)$ due to Lemma \ref{lem:dualvariables}, and Lemma \ref{lem:deltanullstepbounds}, we have
    \begin{equation*}
        \varphi(x_k,\mu_k)-\varphi(x_{k+1},\mu_k)\geq \eta_1 \Delta m_k^U(u_k)\geq\frac{\eta_1\sigma_k}{6}\norm{u_k}^3\geq\frac{\eta_1\sigma_{\min}}{6\sigma_{\max}^3}(-\lambda_{\min}(Z_k^\top H_k Z_k))^3\geq\frac{\eta_1\sigma_{\min}}{6\sigma_{\max}^3}\epsilon_H^3,
    \end{equation*}
   for every $k\in\mathcal{C}_\epsilon$. Then, we replace the finite enumeration of $\mathcal{S}_\epsilon$ with one for $\mathcal{C}_\epsilon$ in \eqref{eq:must} and follow exactly the same steps to obtain
   \begin{equation}
       |\mathcal{C}_\epsilon|\leq \ceil[\Bigg]{\frac{f(x_0)-f_{low}+2\kappa_c\mu_{\max}}{\eta_1\alpha_4}\epsilon_H^{-3}},
   \end{equation}
where $\alpha_4:=\frac{\sigma_{\min}}{6\sigma_{\max}^3}$. The rest follows as in the proof of Theorem \ref{th:fospcomplexity}.
\end{proof}

From the last two results we can finally conclude that the number of steps the SCP algorithm requires to reach an iterate $k$ so that $x_k$ is an approximate second-order stationary point is of order $\mathcal{O}\left(\max\left\{\epsilon_g^{-3/2},\epsilon_c^{-1},\epsilon_H^{-3}\right\}\right)$.

\begin{theorem}
Suppose that \hyperlink{f1}{(F1)}-\hyperlink{f5}{(F5)} hold. Then, for any $\epsilon_g,\epsilon_c,\epsilon_H\in(0,1)$, Algorithm \ref{alg:scp} will reach a point satisfying $\norm{\nabla_x\mathcal{L}(x_k,\lambda_k)}\leq\epsilon_g$, $\norm{c_{k}}_1\leq\epsilon_c$ and $\lambda_{\min}(Z_k^\top H_k Z_k)\geq-\epsilon_H$ in at most
   \begin{equation*}
        \ceil[\Big]{\kappa_s\max\left\{\epsilon_g^{-3/2},\epsilon_c^{-1},\epsilon_H^{-3}\right\}}+1
    \end{equation*}
    successful iterations, where
    \begin{equation*}
        \kappa_s:=\frac{f(x_0)-f_{low}+2\kappa_c\mu_{\max}}{\eta_1 \min\limits_{i=1,2,3,4}\{\alpha_i\}}.
    \end{equation*}
    Further, the algorithm will reach such a point in at most
    \begin{equation*}
    \ceil[\Bigg]{\widetilde{\kappa_s}\max\left\{\epsilon_g^{-3/2},\epsilon_c^{-1},\epsilon^{-3}_H\right\}}+1  \end{equation*}
total iterations, where
\begin{equation*}
    \widetilde{\kappa}_s:=(\log\gamma_1-\log\gamma_3)\kappa_s+\frac{\sigma_{\max}}{\sigma_0}.
\end{equation*}
\end{theorem}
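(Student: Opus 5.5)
The plan is to combine Theorems \ref{th:sospcomplexity} and \ref{th:sospcomplexity2} by a union-of-bad-sets counting argument. Every decrease bound is referred to one common telescoped quantity $f(x_0)-f_{low}+2\kappa_c\mu_{\max}$, as in \eqref{eq:must}. First I will define the set of successful iterations that are not approximate SOSPs in the extended sense used in Theorem \ref{th:sospcomplexity}:
\begin{equation*}
\mathcal{G}_\epsilon:=\mathcal{F}_\epsilon\cup\mathcal{C}_\epsilon ,
\end{equation*}
with $\mathcal{F}_\epsilon$ as in the proof of Theorem \ref{th:sospcomplexity} and $\mathcal{C}_\epsilon$ as in the proof of Theorem \ref{th:sospcomplexity2}. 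Any successful $k\notin\mathcal{G}_\epsilon$ satisfies $\lambda_{\min}(Z_k^\top H_k Z_k)\geq-\epsilon_H$ together with the first-order conditions at $k$ or at $k+1$. I need to handle this ``$k$ or $k+1$'' issue carefully. I would resolve it by also inserting $k+1$ into the curvature requirement. That is, I put $k$ into $\mathcal{C}_\epsilon$ if the curvature test fails at whichever of $x_k,x_{k+1}$ realizes the first-order test. If the failure is at $x_{k+1}$, I charge it to the next successful iteration, which then lies in $\mathcal{C}_\epsilon$. This is the same ``shift by one successful index'' trick as in the second subcase of Theorem \ref{th:sospcomplexity}, and it is what produces the additive $+1$.

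Next, for each $k\in\mathcal{G}_\epsilon$, either the dichotomy from Theorem \ref{th:sospcomplexity} applies, or the bound from Theorem \ref{th:sospcomplexity2} applies. In the first case, $k$ or its successor in $\mathcal{F}_\epsilon$ yields a merit decrease of at least $\eta_1\min_{i\le3}\widetilde\alpha_i\min\{\epsilon_g^{3/2},\epsilon_c\}$. In the second case, $k$ yields a decrease of at least $\eta_1\alpha_4\epsilon_H^3$ via (OR3), Lemma \ref{lem:deltanullstepbounds} and Lemma \ref{lem:sigmabound}. Hence at least half of the indices in $\mathcal{G}_\epsilon$ give a reduction of at least $\eta_1\min_{i\le4}\alpha_i\min\{\epsilon_g^{3/2},\epsilon_c,\epsilon_H^3\}$, the factor $1/2$ already being absorbed into the $\alpha_i$. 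Since the merit values along successful iterations telescope with error at most $2\kappa_c\mu_{\max}$, exactly as in \eqref{eq:telescopic}, \eqref{eq:must} and Lemma \ref{lem:meritparameterbound}, summing gives $|\mathcal{G}_\epsilon|\leq\ceil{\kappa_s\max\{\epsilon_g^{-3/2},\epsilon_c^{-1},\epsilon_H^{-3}\}}$. The extra successful iteration accounts for the possibility that the certified point is $x_{k+1}$.

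For the total count, Lemma \ref{lem:sigmabound} gives $\sigma_k\le\sigma_{\max}$ for all $k$, so Lemma \ref{lem:cardinalitybound} with $\widetilde\sigma=\sigma_{\max}$ yields
\begin{equation*}
|\mathcal{U}_k|\leq\frac{\log(\sigma_{\max}/\sigma_0)-|\mathcal{S}_k|\log\gamma_3}{\log\gamma_1}+1 .
\end{equation*}
Adding $|\mathcal{S}_k|$ and using $\log x\le x$ then gives the $\widetilde\kappa_s$ bound, exactly as at the end of Theorem \ref{th:fospcomplexity}. Since $\epsilon$'s lie in $(0,1)$ the max dominates constants.

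The main obstacle is the first step: simultaneity. Theorems \ref{th:sospcomplexity} and \ref{th:sospcomplexity2} each certify a possibly different iterate, and the first-order certificate may hold only at $x_{k+1}$. I expect to spend the most care there, making sure that charging a curvature failure at $x_{k+1}$ to the next successful index is legitimate. This works because unsuccessful iterations leave $x$, and hence $Z$ and $H$ up to $\lambda$, unchanged. The approximate multipliers $\lambda_k$ may change between those iterations, but only through $v_k$, which does not depend on $\sigma$.
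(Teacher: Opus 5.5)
Your route is the one the paper intends. Its proof is the single sentence that the result follows from Theorems \ref{th:sospcomplexity} and \ref{th:sospcomplexity2}. You are right that simultaneity is the only real content here: the first-order certificate may sit at $x_{k+1}$, while curvature failures only pay off at successful indices. The paper never addresses this.

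However, the step you use to settle it fails as justified. You charge a curvature failure at iteration $k+1$ to the next successful index $k^+$, arguing that $H$ cannot change across the intervening unsuccessful iterations because $\lambda$ changes only through $v$, and $v$ is independent of $\sigma$. Neither half of this holds for Algorithm \ref{alg:scp}:
\begin{itemize}
\item $v_j=\beta_j v_j^c$, where \eqref{eq:beta} forces $\beta_j\le\min\{1,(\norm{v_j^c}\sqrt{\sigma_j})^{-1}\}$. This bound decreases as $\sigma_j$ is increased on unsuccessful iterations; it is precisely what enforces $\norm{v_j}\le\sigma_j^{-1/2}$.
\item $v_j^c$ is recomputed inexactly at every iteration.
\item \eqref{eq:lambdas} only confines $\lambda_j$ to a set, so $\lambda_j$ is not a function of $v_j$ at all.
\end{itemize}
Hence $H_{k^+}$ may differ from $H_{k+1}$. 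The inequality $\lambda_{\min}(Z_{k+1}^\top H_{k+1}Z_{k+1})<-\epsilon_H$ does not put $k^+$ into $\mathcal{C}_\epsilon$, and the charged iteration need not deliver the $\epsilon_H^3$ decrease. Nor can you charge iteration $k+1$ itself, since it may be unsuccessful.

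The fix is to move the first-order test rather than the curvature test. In the proof of Theorem \ref{th:sospcomplexity}, the multiplier at $x_{k+1}$ enters only through Lemma \ref{lem:dualbounds}. For every index $j$ with $x_j=x_{k+1}$, Lemma \ref{lem:normalstepbound} gives $\norm{v_j}\le\kappa_v\norm{c_{k+1}}_1$ whatever $\beta_j$ is. Hence $\norm{\nabla_x\mathcal{L}(x_j,\lambda_j)}\le\kappa_A r_\lambda\kappa_v\gamma_A^{-2}\norm{c_{k+1}}_1+\norm{P_{k+1}^\top g_{k+1}}$. This is exactly the estimate that proof uses for $j=k+1$ in \eqref{eq:cooll} and in its first subcase.

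So define $\mathcal{F}_\epsilon$ with $(x_{k^+},\lambda_{k^+})$ in place of $(x_{k+1},\lambda_{k+1})$; the index $k^+$ exists by Lemma \ref{lem:sigmabound}, and no case bound changes. Now take a successful $k\notin\mathcal{F}_\epsilon\cup\mathcal{C}_\epsilon$ whose first-order certificate sits at $k^+$. Either $(x_{k^+},\lambda_{k^+})$ is certified outright, or $k^+\in\mathcal{C}_\epsilon$, a successful index that itself yields the $\eta_1\alpha_4\epsilon_H^3$ decrease. Since $k\mapsto k^+$ is injective, your pairing count then goes through.

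Two smaller points. First, because bad indices are paired with good ones, the $\mathcal{C}_\epsilon$ contribution is halved as well. With the paper's unhalved $\alpha_4$ you therefore obtain $\min\{\alpha_1,\alpha_2,\alpha_3,\alpha_4/2\}$ rather than $\min_{i\le4}\alpha_i$. Second, Lemma \ref{lem:cardinalitybound} yields $\widetilde\kappa_s/\log\gamma_1$, as in Theorem \ref{th:sospcomplexity}. The displayed total bound lacks that factor, so it does not follow from your computation in general.
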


\begin{proof}
The result directly follows from Theorems \ref{th:sospcomplexity} and \ref{th:sospcomplexity2}.
\end{proof}

\section{Local quadratic convergence}\label{sec:sec5}

In this section we show that the SCP algorithm enjoys local quadratic convergence properties. We conclude that our work is universal within the class of second-order methods that offer both iteration complexity and local quadratic convergence warranties for equality constrained optimization problems\footnote{To the best of our knowledge, the only work we are aware of that claims both complexity bounds and local quadratic convergence is that of Goyens et al. \cite{goyens2024computing}. However, this type of fast local convergence is claimed in a remark in the latter work, while a formal statement and proof are omitted.}. Related papers that guarantee both fast local convergence properties and iterations complexity bounds in equality constrained optimization are those of Bai and Mei \cite{bai2018analysis} and Bourkhissi and Necoara \cite{bourkhissi2025complexity}, where linear and (sub)linear rates are given, respectively.

\begin{theorem}\label{th:quadconv}
    Suppose that \hyperlink{f1}{(F1)}-\hyperlink{f5}{(F5)} hold and the sequence $\{x_k\}$ converges to $x^*$ for some $(x^*, \lambda^*)$ satisfying second order sufficient conditions, i.e., $\nabla_x \mathcal{L}(x^*, \lambda^*) = 0$, $c(x^*) = 0$, and $Z_*^{\top} \nabla^2_{xx} \mathcal{L}(x^*, \lambda^*) Z_*$ is positive definite, where $Z_*$ is an orthonormal basis of $\mathcal{N}(A(x^*))$. Then, $\{x_k\}$ converges to $x^*$ Q-quadratically.
\end{theorem}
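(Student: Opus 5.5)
The plan is to show that, near $x^*$, every iteration is eventually successful and accepts $x_k+d_k$ or $x_k+d_k+w_k$, and that $d_k$ is an $\mathcal{O}(\norm{x_k-x^*}^2)$ perturbation of the exact SQP--Newton step at $(x_k,\lambda_k)$. The standard Newton analysis then gives $\norm{x_{k+1}-x^*}\leq C\norm{x_k-x^*}^2$. Throughout write $e_k:=\norm{x_k-x^*}$.

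\emph{Step 1: the iterates eventually look like Newton.} By Lemma \ref{lem:sigmabound}, $\sigma_k\leq\sigma_{\max}$. Since $c_k\to0$, Lemma \ref{lem:normalstepbound} gives $\norm{v_k^c}\to0$. Hence $k\in\mathcal{K}$ and $\beta_k=1$ for all large $k$, so $A_kd_k=A_kv_k^c$ and, by \eqref{eq:normalstep3}, $\norm{A_kd_k+c_k}_1\leq r_v\norm{d_k}^3$.

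For the multipliers, Lemma \ref{lem:dualbounds} gives $\norm{\lambda_k-\lambda_k^*}=\mathcal{O}(\norm{c_k})=\mathcal{O}(e_k)$. Smoothness of $\lambda^*(\cdot)$ in \eqref{eq:lag2} under (F1)--(F2) gives $\norm{\lambda_k^*-\lambda^*}=\mathcal{O}(e_k)$. Therefore $H_k\to\nabla^2_{xx}\mathcal{L}(x^*,\lambda^*)$, and by continuity $Z_k^\top H_kZ_k\succeq 2\underline{\lambda} I$ for some $\underline{\lambda}>0$ and all large $k$.

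Now write the first-order conditions for $d_k$. Using the dual estimate $\hat\lambda_k$ from the proof of Lemma \ref{lem:sosplemma1}, together with (OR2), we get
\[
H_kd_k+A_k^\top\hat\lambda_k=-g_k-\sigma_k\norm{u_k}u_k+r_k,\qquad A_kd_k=-c_k+\mathcal{O}(\norm{d_k}^3),
\]
where $\norm{r_k}\leq\delta\sigma_{\max}\norm{u_k}^2$. So $(d_k,\hat\lambda_k)$ solves the SQP KKT system up to a residual of size $\mathcal{O}(\norm{d_k}^2)$. The KKT matrix is uniformly invertible near $x^*$ (LICQ plus reduced positive definiteness), which gives $\norm{d_k}=\mathcal{O}(e_k)$. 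Combining this with the classical Taylor argument for Newton's method on $\nabla\mathcal{L}=0,\ c=0$, in which the $\mathcal{O}(e_k)$ multiplier error enters only through the $\mathcal{O}(\norm{c_k})$-weighted terms, yields $\norm{x_k+d_k-x^*}=\mathcal{O}(e_k^2)$. If a correction is taken, Lemma \ref{lem:wbound} gives $\norm{w_k}\leq\kappa_w\norm{d_k}^2=\mathcal{O}(e_k^2)$, so the bound survives.

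\emph{Step 2 (main obstacle): eventual acceptance.} Unsuccessful iterations leave $x_{k+1}=x_k$, which would break Q-quadratic convergence, so I must show that $\rho_k\geq\eta_1$ or $\rho_k^{corr}\geq\eta_1$ for all large $k$. The argument splits on which component of $d_k$ dominates.

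If $\norm{v_k}\geq\norm{u_k}$, Lemma \ref{lem:reduction} and Lemma \ref{lem:normalstepbound} give $\Delta q_k\geq\tau\mu_{-1}\norm{c_k}_1\geq c'\norm{v_k}\geq \tfrac{c'}{2}\norm{d_k}$. The discrepancy in Lemma \ref{lem:ratiobound} is only $\mathcal{O}(\norm{d_k}^2)$, so $\rho_k\to1$.

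If $\norm{u_k}>\norm{v_k}$, use \eqref{eq:lemmaOR23}: since $u_k\in\mathcal{N}(A_k)$, $\Delta m_k^U(u_k)\geq\underline{\lambda}\norm{u_k}^2\geq\tfrac{\underline\lambda}{4}\norm{d_k}^2$. The uncorrected discrepancy is also of order $\norm{d_k}^2$ (the Maratos effect), so I would turn to the corrected step, which is available because $k\in\mathcal{K}$. The estimates in the proof of Lemma \ref{lem:sigmabound}, namely \eqref{eq:4.11}--\eqref{eq:4.16} with $\beta_k=1$, show that
\[
\varphi(x_k+d_k+w_k,\mu_k)-q_k(d_k)=\mathcal{O}(\norm{d_k}^3)
\]
with constants independent of $\sigma_k$. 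Hence $1-\rho_k^{corr}=\mathcal{O}(\norm{d_k})\to0$. The delicate point to check is the bracket $g_k^\top w_k-\tfrac12\sum_i\lambda_k^id_k^\top\nabla^2c_k^id_k$: it involves the term $\norm{v_k}\norm{w_k}$ and the use of $\lambda_k$ rather than $\lambda^*$, and I must verify that it is genuinely cubic.

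\emph{Step 3: conclusion.} With every late iteration successful, $x_{k+1}\in\{x_k+d_k,\ x_k+d_k+w_k\}$, and Step 1 gives $\norm{x_{k+1}-x^*}\leq C e_k^2$ for all large $k$. This is Q-quadratic convergence.
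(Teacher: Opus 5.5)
Your proposal is correct, but it argues differently from the paper in two respects.

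\emph{The local error estimate.} The paper works with a reduced $n\times n$ system, $x\mapsto(\nabla_x\mathcal{L}(x,\lambda(x)),c(x))$, built on the least-squares multiplier function. It bounds $\norm{u_k}\le c_{H_{\min}}^{-1}(\norm{P_k^\top g_k}+\kappa_H\kappa_v\norm{c_k}_1)$ directly from \eqref{eq:4.1}--\eqref{eq:4.2} and reduced positive definiteness. It then writes $u_k=Z_kp_k$, shows that $d_k$ satisfies \eqref{eq:desiredsystem} with residual $\mathcal{O}(\norm{Z_k^\top g_k}^2+\norm{c_k}^2)$, and cites an inexact Newton theorem; the Jacobian of that system is matched only up to the footnoted error term. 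You instead use the full primal--dual KKT matrix at $(x_k,\lambda_k)$ together with the auxiliary multiplier $\hat\lambda_k$ from Lemma \ref{lem:sosplemma1}. In that formulation the Jacobian is exact, and the cubic term and the (OR2) residual appear as an $\mathcal{O}(\norm{d_k}^2)$ perturbation. The multiplier error $\norm{\lambda_k-\lambda^*}=\mathcal{O}(e_k)$ is absorbed into the primal--dual distance, so the classical Newton estimate gives $\norm{x_k+d_k-x^*}=\mathcal{O}(e_k^2)$ directly.

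\emph{Acceptance of late steps.} Your Step 2 proves that every late iteration accepts $x_k+d_k$ or $x_k+d_k+w_k$. The paper's proof never addresses this, yet Q-quadratic convergence needs it, because an unsuccessful iteration sets $x_{k+1}=x_k$. Your split on $\norm{u_k}$ versus $\norm{v_k}$ works:
\begin{itemize}
\item In the first case, $\Delta q_k\ge c'\norm{d_k}$, which beats the $\mathcal{O}(\norm{d_k}^2)$ discrepancy of Lemma \ref{lem:ratiobound}.
\item In the second case, $\Delta q_k\ge\tfrac{\underline{\lambda}}{4}\norm{d_k}^2$, which beats the $\sigma_k$-independent bound $\varphi(x_k+d_k+w_k,\mu_k)-q_k(d_k)\le\tfrac{C_2}{3}\norm{d_k}^3$ from the proof of Lemma \ref{lem:sigmabound}.
\end{itemize}
So your route is the more complete one.

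Three minor points:
\begin{itemize}
\item Deducing $\norm{d_k}=\mathcal{O}(e_k)$ from uniform invertibility of the KKT matrix requires knowing $\norm{d_k}\to0$ beforehand, so that the $\mathcal{O}(\norm{d_k}^2)$ residual can be absorbed. This is available from Lemmas \ref{lem:nullstepconvergence} and \ref{lem:normalstepbound}, or from the paper's reduced-curvature bound on $\norm{u_k}$.
\item Your remark that the multiplier error enters only through $\mathcal{O}(\norm{c_k})$-weighted terms is inaccurate, since $\norm{\lambda_k^*-\lambda^*}$ is $\mathcal{O}(e_k)$ rather than $\mathcal{O}(\norm{c_k})$. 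This is harmless, because the primal--dual Newton estimate $\mathcal{O}(\norm{(x_k,\lambda_k)-(x^*,\lambda^*)}^2)$ tolerates an $\mathcal{O}(e_k)$ multiplier error.
\item The bracket you flag as delicate is genuinely cubic by \eqref{eq:gw}. The $\xi_k^\lambda$ term is at most $\kappa_Ar_\lambda\gamma_A^{-2}\norm{v_k}\norm{w_k}\le\kappa_Ar_\lambda\kappa_w\gamma_A^{-2}\norm{d_k}^3$. Also, $\lambda_k$ appears consistently both in $H_k$ and in the Taylor expansion of $\lambda_k^\top c(x_k+d_k)$, so using $\lambda_k$ rather than $\lambda^*$ costs nothing.
\end{itemize}
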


\begin{proof}
First, under the assumption that $x_k \rightarrow x^*$, we establish a number of properties of the iterates of Algorithm \ref{alg:scp} for all $k$ sufficiently large. Note that since $c(x^*) = 0$, by Lipschitz continuity of $c$, it follows that $\|c_k\| = \mathcal{O}(\|x_k - x^*\|)$. By \eqref{eq:normalstep3}, \eqref{eq:beta}, Lemma \ref{lem:normalstepbound}, and Lemma \ref{lem:sigmabound}, we have that, for all $k$ sufficiently large, $\beta_k = 1$ and thus $v_k = v_k^c$ for all $k$ sufficently large.
    
Now, since $Z_*^\top \nabla^2_{xx} \mathcal{L}(x^*, \lambda^*) Z_*$ is positive definite, we have that $u^\top P_*^\top \nabla^2_{xx} \mathcal{L}(x^*, \lambda^*) P_* u > 0$ for all $u \neq 0, u \in \mathcal{N}(A(x^*))$, where $P_*$ is the orthogonal projection matrix on $\mathcal{N}(A(x^*))$. Then, by \hyperlink{f2}{(F2)} and \hyperlink{f5}{(F5)}, for all $k$ sufficiently large, $u^\top P_k^\top \nabla^2_{xx} \mathcal{L}(x_k, \lambda_k^*) P_k u > 0$ for all $u \neq 0, u\in \mathcal{N}(A_k)$. Further, we have $\|\nabla^2_{xx} \mathcal{L}(x_k, \lambda_k^*) - H_k\| \leq \mathcal{O}(\|x_k - x^*\|)$ by \eqref{eq:gradientdiff}. It follows that for all $k$ sufficiently large, there exists $c_{H_{\min}} > 0$ such that $u^\top P_k^\top H_k P_k u \geq c_{H_{\min}}$ for all $u \neq 0, u\in \mathcal{N}(A_k)$. Therefore, from \eqref{eq:4.1}-\eqref{eq:4.2} and $u_k \in \mathcal{N}(A_k)$ we find
    \begin{align*}
        c_{H_{\min}}\norm{u_k}^2&\leq -\widetilde{g_k}^\top u_k - (1-\delta)\sigma_k\norm{u_k}^3 \nonumber \\
        &\leq -\widetilde{g_k}^\top u_k \nonumber \\
        &\leq \norm{P_k^\top g_k}\norm{u_k}+\kappa_H\norm{v_k}\norm{u_k} \nonumber \\
        &\leq \norm{u_k}\left(\norm{P_k^\top g_k}+\kappa_H\kappa_v \|c_k\|_1\right),
    \end{align*}
    and thus
    \begin{equation*}
        \|u_k\| \leq c_{H_{\min}}^{-1}\left(\norm{P_k^\top g_k}+\kappa_H\kappa_v \|c_k\|_1\right).
    \end{equation*}
    Therefore, it follows that there exists $K$ such that
    \begin{equation} \label{eq:dkquadconverge}
        \|d_k\| = \mathcal{O}(\|P_k^\top g_k\| + \|c_k\|) = \mathcal{O}(\|Z_k^\top g_k\| + \|c_k\|),
    \end{equation}
    for all $k \geq K$, where the last equality follows by Lemma \ref{lem:dualvariables}. For the rest of the proof, we assume that $k \geq K$.

Next, consider the system of nonlinear equations
    \begin{equation*}
        \left[
        \begin{matrix}
            \nabla_x \mathcal{L}(x, \lambda(x)) \\
            c(x)
        \end{matrix}
        \right] = 0,
    \end{equation*}
 where $\lambda(x) = -(A(x)A(x)^T)^{-1} A(x) \nabla f(x)$ and the Newton iteration on this system,\footnote{Modulo an error term of $\mathcal{O}(\|\nabla_x \mathcal{L}(x,\lambda(x))\|\|\bar{d}\|)$, which does not impact quadratic convergence.}
    \begin{equation} \label{eq:system1}
        \left[\begin{matrix} Z(\bar{x}_k)^{\top} \nabla_{xx}^2 \mathcal{L}(\bar{x}_k, \lambda(\bar{x}_k)) \\ A(\bar{x}_k) \end{matrix} \right] \bar{d}_k = -\left[\begin{matrix} Z(\bar{x}_k)^{\top} \nabla f(\bar{x}_k) \\ c(\bar{x}_k) \end{matrix} \right], \quad \quad \bar{x}_{k+1} = \bar{x}_k + \bar{d}_k,
    \end{equation}
where we utilized the equivalence between $\nabla_x \mathcal{L}(\bar{x}_k, \lambda(\bar{x}_k))$ and $Z(\bar{x}_k)^T \nabla f(\bar{x}_k)$ from Lemma \ref{lem:dualvariables}. Since $x_k \rightarrow x^*$, $A_k$ is nonsingular for all $k$ by Assumption \hyperlink{f2}{(F2)}, and $A(x)$ is Lipschitz continuous over $\mathcal{X}$ by Assumption \hyperlink{f1}{(F1)}, it follows that $A(x^*)$ has full rank. In addition, since $Z_*^{\top} \nabla^2_{xx} \mathcal{L}(x^*, \lambda^*) Z_*$ is positive definite, the Jacobian of this system is nonsingular at $x^*$, and thus there exists a ball around $x^*$ of nonzero radius such that the sequence $\{\bar{x}_k\}$ converges quadratically to $x^*$. Given this, we wish to show that the step $d_k$ generated by Algorithm \ref{alg:scp} satisfies
    \begin{equation} \label{eq:desiredsystem}
        \left[\begin{matrix} Z_k^{\top} \nabla_{xx}^2 \mathcal{L}(x_k, \lambda_k^*) \\ A_k \end{matrix} \right] d_k = -\left[\begin{matrix} Z_k^{\top} g_k \\ c_k \end{matrix} \right] + \left[\begin{matrix} \xi_g \\ \xi_c \end{matrix} \right],
    \end{equation}
where $\|\xi_g\|, \|\xi_c\| = \mathcal{O}(\|Z_k^{\top} g_k\|^2 + \|c_k\|^2)$. Under the assumption that such a relation holds, the claim follows directly by a standard inexact Newton argument (see, for example, \cite[Theorem 11.3]{nocedal2006numerical}).

Now, recall that $u_k \in \mathcal{N}(A_k)$. Therefore, there exists $p_k \in \mathbb{R}^{n-m}$ such that $u_k = Z_k p_k$ and
    \begin{equation}\label{eq:reducedmodel}
        p_k \approx \underset{p \in \mathbb{R}^{n-m}}{\argmin} \ f_k + (g_k + H_k v_k)^\top Z_k p + \frac12 p^\top Z_k^\top H_k Z_k p + \frac{\sigma_k}{3} \|p\|^3.
    \end{equation}
    Thus, it follows that the step $d_k = Z_k p_k + v_k$ satisfies the nonlinear system of equations
    \begin{equation} \label{eq:system2}
        \left[\begin{matrix} Z_k^{\top} H_k \\ A_k \end{matrix} \right] (Z_k p_k + v_k) = -\left[\begin{matrix} Z_k^{\top} g_k \\ c_k \end{matrix} \right] + \left[\begin{matrix}
            -\sigma_k \|p_k\|p_k + \hat{\xi}_{g} \\ \hat{\xi}_c
        \end{matrix}\right],
    \end{equation}
    where $\hat{\xi}_g$ and $\hat{\xi}_c$ are the residuals of these system satisfying $\|\hat{\xi}_g\| = \mathcal{O}(\|p_k\|^2)$ by \hyperlink{or2}{(OR2)} and $\|\hat{\xi}_c\| \leq r_v \|v_k\|^3$ by \eqref{eq:normalstep3}. Returning our attention to \eqref{eq:desiredsystem}, we see that $\xi_c = \hat{\xi}_c$, and thus by \eqref{eq:normalstep3}, Lemma \ref{lem:normalstepbound}, \hyperlink{f1}{(F1)}, and $\beta_k = 1$ for all $k$ sufficiently large, we get
    \begin{equation}
        \|\xi_c\| \leq r_v \|v_k^c\|^3 \leq r_v \kappa_{vs} \kappa_v \kappa_c \|c_k\|^2 = \mathcal{O}(\|c_k\|^2),
    \end{equation}
    so the second equation satisfies the desired bound.

    Focusing on the first equation of \eqref{eq:desiredsystem}, by \eqref{eq:system2}, we have that
    \begin{align*}
        Z_k^\top \nabla^2_{xx} \mathcal{L}(x_k,\lambda_k^*) (Z_k p_k + v_k) &= Z_k^\top (\nabla^2_{xx} \mathcal{L}(x_k,\lambda_k^*) - H_k) (Z_k p_k + v_k) + Z_k^\top H_k (Z_k p_k + v_k) \\
        &= - Z_k^\top g_k + Z_k^\top (\nabla^2_{xx} \mathcal{L}(x_k,\lambda_k^*) - H_k) (Z_k p_k + v_k) - \sigma_k \|p_k\| p_k + \hat{\xi}_g.
    \end{align*}
    Therefore, we simply need to bound the final three terms in the above equation. For the first term, recalling that $d_k = Z_k p_k + v_k$,
    \begin{align*}
        \norm{Z_k^\top (\nabla^2_{xx} \mathcal{L}(x_k,\lambda_k^*) - H_k) d_k} &= \left\|Z_k^\top \sum_{i=1}^m (\lambda_k^{*,i} - \lambda_k^i) \nabla^2 c_k^i d_k\right\| \\
        &\leq \norm{Z_k^\top} \|\lambda_k^* - \lambda_k\|_{\infty} \left\|\sum_{i=1}^m \nabla^2 c_k^i\right\| \|d_k\| \\
        &\leq r_\lambda \kappa_{ch} \|v_k\| \|d_k\| \\
        &\leq r_\lambda \kappa_{ch} \|d_k\|^2 \\
        &= \mathcal{O}(\|Z_k^\top g_k\|^2 + \|c_k\|^2),
    \end{align*}
    where the final equation follows for all $k \geq K$ by \eqref{eq:dkquadconverge}. Next, by Lemma \ref{lem:sigmabound},
    \begin{equation*}
        \sigma_k \norm{p_k}^2 \leq \sigma_{\max} \|p_k\|^2 \leq \sigma_{\max}\|d_k\|^2 = \mathcal{O}(\|Z_k^\top g_k\|^2 + \|c_k\|^2).
    \end{equation*}
    
    Finally, we recall that
    \begin{equation*}
        \|\hat{\xi}_g\| = \mathcal{O}(\|p_k\|^2) = \mathcal{O}(\|d_k\|^2) = \mathcal{O}(\|Z_k^\top g_k\|^2 + \|c_k\|^2).
    \end{equation*}
    Therefore, it follows that $d_k$ satisfies \eqref{eq:desiredsystem} with
    \begin{equation*}
        \|\xi_c\| = \mathcal{O}(\|Z_k^\top g_k\|^2 + \|c_k\|^2) \ \text{and} \ \|\xi_g\| = \mathcal{O}(\|Z_k^\top g_k\|^2 + \|c_k\|^2).
    \end{equation*}
    Finally, in the case where a second-order correction step is taken, by Lemma \ref{lem:wbound}, we have that
    \begin{equation*}
        \|w_k\| \leq \kappa_w \|d_k\|^2 = \mathcal{O}(\|Z_k^\top g_k\|^2 + \|c_k\|^2),
    \end{equation*}
    so there still exists $\xi_g = \mathcal{O}(\|Z_k^\top g_k\|^2 + \|c_k\|^2)$ and $\xi_c = \mathcal{O}(\|Z_k^\top g_k\|^2 + \|c_k\|^2)$ for which \eqref{eq:desiredsystem} is satisfied with $d_k + w_k$ in place of $d_k$. Hence, $\{x_k\}$ converges to $x^*$ Q-quadratically.
\end{proof}

\section{Discussion}\label{sec:sec6}

In this work, we introduced a novel cubic sequential programming method with the best known worst-case complexity guarantees for smooth, equality constrained optimization. Perhaps surprisingly, we showed that the optimal theoretical guarantees for the unconstrained optimization setting unaffectedly carry over to the equality constrained one: An optimizer interested in minimizing a smooth, non-convex function should not worry about an unprecedented increase of the worst-case number of iterations needed for convergence to a local minimum when (smooth, non-convex) equality constraints are also considered, unless feasibility is its primary goal. Specifically, the algorithm developed converges to an approximate second-order critical point in at most $\mathcal{O}\left(\max\left\{\epsilon_g^{-3/2},\epsilon_c^{-1},\epsilon_H^{-3}\right\}\right)$ iterations. The bounds with respect to the gradient and Hessian of the Lagrangian are known to be tight, as they match those of the unconstrained setting, whereas the bound that corresponds to the constraint violation matches the best reported baseline for this class of problems. Further, we showed that our algorithm enjoys local quadratic convergence properties, a property that has not been established for other second-order methods with complexity guarantees for equality constrained optimization. \par 

Our work raises a number of questions and open problems. Now that we have established the fact that second-order theoretical guarantees for the unconstrained and equality constrained settings are the same, it is natural to ask whether similar guarantees hold for inequality constrained problems. Second, although the complexity bound $\mathcal{O}\left( \max\left\{\epsilon_g^{-3/2},\epsilon_H^{-3}\right\} \right)$ is sharp \cite{cartis2012complexity}, the same is not currently known for the constraint violation bound $\mathcal{O}(\epsilon^{-1}_c)$. Although this is identical to the best-known bound for this class of problems \cite{berahas2025sequential, curtis2024worst}, there are no obvious indications that one cannot do better when it comes to ensuring (approximate) feasibility. Indeed, given the everywhere LICQ assumption, if one ignores optimality all together and focuses entirely on feasibility, it is easy to obtain linear convergence in the constraint violation. Thus, if the bound for Algorithm \ref{alg:scp} is tight for this class of problems, it is due to the conflicting nature of achieving optimality and feasibility simultaneously. Therefore, a natural open question is whether one can generate matching lower bounds, or if there are algorithms that can achieve faster convergence with respect to the constraint violation while maintaining the complexity with respect to the gradient and Hessian of the Lagrangian.\par

Despite the fact that our algorithm uses tools from standard SQP theory, it still requires solving an unconstrained cubic subproblem, which can be a computationally expensive task \cite{carmon2018analysis}. Incorporating the insights developed here into more standard SQP frameworks, such as line search or trust region based methods, may be a fruitful direction of future work. Lastly, for the main results we required a ``linearly independence" constraint qualification, which is oftentimes characterized as a ``strong" assumption in the relevant literature of non-convex optimization: Most two-phase methods developed for equality constrained optimization problems make no such assumption (albeit, with respect to a different constraint violation measure; see Appendix \ref{sec:AppA}). However, they fall short of ensuring satisfactory constraint violation bounds. Indeed, the best known iteration threshold with respect to approximate feasibility is equal to $\mathcal{O}(\epsilon_c^{-3/2})$ \cite{cartis2013evaluation, curtis2018complexity}. It is therefore natural to examine whether the bounds established in this work - especially the ones with respect to the constraints - can be supported under weaker assumptions.

\bibliography{references}

\begin{appendices}

\section{Extended Related work}\label{sec:AppA}

In equality constrained optimization, the landscape of theoretical results is shaped as much by the adopted optimality criteria as by the algorithm itself. The convergence guarantees that appear in the literature are always stated relative to a particular class of critical points, chosen according to the prevailing constraint qualifications and regularity assumptions. As of now, there is no universal notion of optimality. For well-posed smooth problems satisfying constraint qualifications, such as LICQ, the natural target points are framed with respect to the Lagrangian function, and are usually represented by \eqref{eq:fosp}-\eqref{eq:sosp}, the latter of which have been adopted here. The state-of-the-art SQP methods as well as a number of augmented Lagrangian methods developed for problems of the form \eqref{eq:1} are also designed around these optimality criteria; see Table \ref{tab:table}. However, a number of ``two-phase" works are concerned with different notions of stationary minimizers, which can be understood as scaled KKT points. In addition, some SQP and augmented Lagrangian methods for equality constrained optimization have been developed over manifolds due to their attractive properties in modern applications; see, for instance, \cite{bai2018analysis, goyens2024computing}. In such spaces the notion of criticality is more generic than that of \eqref{eq:fosp}-\eqref{eq:sosp}. In this section we expand on single-phase methods over manifolds and on two-phase methods, by briefly studying prior works and comparing their optimality termination criteria with the ones considered in this paper.

\subsection{Two-phase methods}

One of the most standard approaches in non-convex constrained optimization are two-phase methods. A number of variations of such methods have been developed by Cartis, Gould and Toint in a series of sequent works \cite{cartis2011evaluation, cartis2012complexity, cartis2013evaluation, cartis2014complexity, cartis2019optimality} for various constrained settings, including the equality one \eqref{eq:1}. In its first phase, a two-phase algorithm is searching for an approximate feasible solution. Namely, given a primal tolerance $\epsilon_P>0$, the algorithm is seeking for an initial point $x_1$ such that $\norm{c(x_1)}\leq \epsilon_P$. This process is usually achieved by solving an unconstrained least-squares problem. Unless approximate infeasibility is deduced, in its second phase, the algorithm is decreasing the objective value while maintaining approximate feasibility in every iteration. The termination criteria proposed in two-phase methods are different to \eqref{eq:fosp}-\eqref{eq:sosp}. The latter target points only make sense when the Jacobian of the constraints is full rank over the space of iterates and all of its singular values are bounded away from zero, i.e., when LICQ holds. Our assumption \hyperlink{f2}{(F2)} is, however, not considered in most two-phase methods.\par
The termination criterion of Cartis, Gould and Toint \cite{cartis2013evaluation} differentiates between cases of a zero/non-zero residual of criticality. Rigorously speaking, given primal-dual tolerances $\epsilon_P,\epsilon_D\in(0,1)$, their two-phase algorithm terminates when
\begin{equation}\label{eq:a1}
    \norm{r(x,t)}\leq \epsilon_P\quad\text{or}\quad \norm{g_r(x,t)}\leq \epsilon_D,
\end{equation}
where
\begin{equation}
    r(x,t)=\begin{pmatrix}
c(x) \\
f(x)-t
\end{pmatrix}
\quad\text{and}\quad g_r(x,t)=\frac{\nabla_x r(x,t)^\top r(x,t)}{\norm{r(x,t)}}\mathbf{1}_{\{r(x,t)\neq 0\}}.
\end{equation} 
Here, $t$ can be understood as the ``target" value of the objective. In fact, the two-phase methods of Table \ref{tab:table} are mainly concerned with just the second bound of \eqref{eq:a1}. This is because the second phase - in combination with the initialization property $\norm{c(x_1)}\leq \epsilon_P$ of the first phase - implicitly guarantees that all iterates are approximately feasible, namely $\norm{c(x_k)}\leq \epsilon_P$ for every $k\geq 1$. The criticality condition $\norm{g_r(x,t)}\leq \epsilon_D$ essentially corresponds to a scaled KKT point; the scaling is considered so that it takes the size of the Lagrangian multipliers into account.\par
Both the works of Cartis, Gound and Toint \cite{cartis2013evaluation} and Curtis, Robinson and Samadi \cite{curtis2018complexity} (which also considers the same stationarity criterion) achieve complexity bounds of order $\mathcal{O}\left(\epsilon_P^{-1/2}\epsilon_D^{-3/2}\right)$, where $\epsilon_D\leq \epsilon_P^{1/3}$.  When $\epsilon_P=\epsilon_D$, this bound boils down to $\mathcal{O}\left(\epsilon^{-2}\right)$, which matches that of gradient descent for unconstrained optimization. On the other hand, if one takes $\epsilon_D=\epsilon_P^{2/3}$, then the worst case number of iterations reduces to  $\mathcal{O}\left(\epsilon^{-3/2}\right)$, which is known to be optimal. Considering different primal-dual tolerances $\epsilon_P,\epsilon_D$ is reasonable if one takes into account the different scalings of the criticality residuals and the dual gradients, an event plausible when LICQ fails.\par
In the first phase of the algorithm in \cite{cartis2013evaluation}, the least squares problem $\frac{1}{2}\norm{r(x,t)}^2$ is solved. Both phases of the algorithm suggested incorporate a cubic regularization method. The motivation behind this choice lies on the success of cubic methods in proving optimal complexity bounds for the unconstrained setting \cite{cartis2011adaptive, nesterov2006cubic}. Instead, the work of Curtis et al. \cite{curtis2018complexity} resorts to a trust funnel method, inspired by previous trust-region methods \cite{curtis2017trust}. Importantly, while it achieves similar theoretical guarantees, it improves upon a practical disadvantage of \cite{cartis2013evaluation}: The first phase of the trust-funnel algorithm, besides effectively seeking an initial approximate feasible solution, also reduces the objective value.\par

Aiming to extend the theoretical guarantees of their classical two-phase approach to second and third-order critical points, Cartis et al. \cite{cartis2019optimality} considered the equality constrained problem \eqref{eq:1} under the additional constraint ``$x\in\mathcal{F}$", where $\mathcal{F}$ is a non-empty closed convex set. The termination criterion considered resembles \eqref{eq:a1} but is appropriately modified so that it targets stationary points of higher order: An approximate q-order critical point is defined as satisfying
\begin{equation}
    \phi_{\mu,j}^{\Delta}(x,t)\leq \epsilon_D \Delta^{j}\norm{r(x,t)}\quad\text{for every\;}j\in\{1,2,...,q\},
\end{equation}
where
\begin{equation}\label{eq:a4}
    \phi_{\mu,j}^{\Delta}(x,t) := \frac{1}{2}\norm{r(x,t)}^2 - 
\min_{\substack{d \in \mathcal{F} \\ \|d\| \le \Delta}} T_{\mu,j}(x,d)
\end{equation}
is the largest feasible decrease of the jth order Taylor model achieved over a trust-region in $\mathcal{F}$.
For $q=1$, when second-order derivative information is available, the complexity bounds match those of \cite{cartis2013evaluation, curtis2018complexity}. Whereas, for $q=2$ and the special case $\epsilon_P=\epsilon_D=\epsilon$, the bound of Table \ref{tab:table} boils down to $\mathcal{O}(\epsilon^{-5})$ with the use of a trust-region inner solver. One downside of this approach is that each subproblem must respect the constraint $x \in \mathcal{F}$ exactly, which, even for simple sets such as $x \geq 0$, may result in NP-hard subproblems.\par

Further, Cartis et al. \cite{cartis2014complexity} treated the more generic inequality constrained optimization problem. With the introduction of a first-order short-step homotopy algorithm, they managed to show a complexity bound no worse than the one provided by steepest descent. The termination criterion implemented in their two-phase method is different to the previous ones. In particular, their algorithm terminates when $\chi(x,t)\leq \epsilon$, where
\begin{equation}\label{eq:a5}
    \chi(x,t)=l_{\varphi}(x,t,0)-\min_{\norm{d}\leq 1}l_{\varphi}(x,t,d)
\end{equation}
for $l_{\varphi}(x,t,d):=\norm{c(x)+A(x) d}+\left| f(x)+g(x)^\top d-t \right|$. As before, $t$ here can be thought of as a ``target value" for the objective function. As a justification for this criterion, the equivalence ``$\chi(x,t)=0$ if and only if $x$ is a first-order stationary point" is shown by the authors. \par
Birgin et al. \cite{birgin2016evaluation} also considered the inequality constrained setting. Their two-phase feasibility and target-following algorithm consists of three stopping criteria in vein similar to the aforementioned \eqref{eq:a1}-\eqref{eq:a4}, but without a scaling characterization. More specifically, if the algorithm stops at the first criterion, then approximate infeasibility is deduced. Their second criterion gives an (unscaled) $\epsilon_D$-KKT point. The last termination rule can be interpreted as reaching an $\epsilon_P$-feasible point where some standard regularity condition fails, under mild tolerance conditions. The authors conclude that their suggested method finds an unscaled approximate KKT point under suitable ``nondegeneracy" assumptions, in contrast to the scaled KKT points considered in the preceding works. When information of the first-order derivatives of the problem's functions is available, they can achieve an iteration bound of $\mathcal{O}(\epsilon^{-3})$ to first order points, where $\epsilon:=\epsilon_P=\epsilon_D$. For second order derivatives, the bound that appears in Table \ref{tab:table} equals $\mathcal{O}(\epsilon^{-2})$, matching that of the previous works for the case $\epsilon_P=\epsilon_D$.\par
Finally, Cartis et al. \cite{cartis2011evaluation} targeted the unconstrained optimization problem $\min f(x)+h(c(x))$, where $h$ is a composite function. The equality constrained optimization problem \eqref{eq:1} then becomes a special case for $h(c(x)):=\rho \norm{c(x)}$ for some penalty parameter $\rho>0$. Although their algorithm consists of just one phase, their termination criterion is inspired by the ones of the above two-phase methods (mainly \eqref{eq:a5}). More rigorously, their exact penalty function algorithm terminates when a point $x$ with $\Psi(x)\leq \epsilon$ is encountered, where
\begin{equation}
    \Psi(x)=l(x,0)-\min_{\norm{d}\leq 1}l(x,d)
\end{equation}
for $l(x,d):=f(x)+g(x)^\top d+\rho \norm{c(x)+A(x)d}$. Their worst-case iteration complexity guarantees range from $\mathcal{O}(\epsilon^{-2})$ to $\mathcal{O}(\epsilon^{-5})$, depending on the boundedness of the penalty parameter.

\subsection{Augmented Lagrangian and SQP methods over manifolds}

Some of the most attractive single-phase approaches for the problem \eqref{eq:1} are those making use of the augmented Lagrangian function
\begin{equation}\label{eq:auglag}
    \mathcal{L}_{\beta}(x,\lambda):=f(x)+\lambda^\top c(x) +\frac{\beta}{2}\norm{c(x)}^2,
\end{equation}
for some parameter $\beta\geq 0$. Augmented Lagrangian methods have been a popular choice over the last few years for tackling equality constrained optimization problems \cite{grapiglia2021complexity, he2023newton, xie2021complexity}. Despite their practical performance, they lack good complexity guarantees in comparison to SQP and two-phase methods, as one can readily verify via inspecting Table \ref{tab:table}.\par

The work of Goyens, Eftekhari and Boumal \cite{goyens2024computing} represents an exception to the aforementioned works. They consider \eqref{eq:1} with the generic constraint ``$x\in\mathcal{M}$", where $\mathcal{M}$ represents a manifold. Their method combines gradient and eigensteps applied to Fletcher's augmented Lagrangian function, the latter of which is known for being computationally expensive to compute. Their complexity bound $\mathcal{O}\big(\max\left\{\epsilon_g^{-2},\epsilon_c^{-2},\epsilon_H^{-3}\right\}\big)$ to second-order stationarity - using only first derivatives of the objective and constraints - matches the bound of Theorems \ref{th:fospcomplexity}, \ref{th:sospcomplexity2}, with the only difference appearing on the constraint violation. Although this is a weaker bound, their target points considered are more general than \eqref{eq:fosp}-\eqref{eq:sosp}, since the gradients and Hessians are defined with respect to the tangent space of a layer manifold $\mathcal{M}_x$. Rigorously speaking, a point $x$ is a $(\epsilon_c,\epsilon_g,\epsilon_H)$-approximate second-order point if
\begin{equation}\label{eq:manifolds}
    \norm{c(x)}\leq\epsilon_c,\quad \norm{\text{grad}_{\mathcal{M}_x}f(x)}\leq \epsilon_g,\quad\text{and}\quad \text{Hess}_{\mathcal{M}_x}f(x)\succeq- \epsilon_H\text{Id}.
\end{equation}
Their criticality conditions have a natural geometric interpretation and can be thought of as extensions of Riemannian optimality conditions to points outside the feasible manifold $\mathcal{M}$. Importantly, they show that \eqref{eq:manifolds} always implies \eqref{eq:fosp}-\eqref{eq:sosp} when all tolerances are the same and equal to $\epsilon$, but the converse is not true, in general. In similar fashion to other related works \cite{grapiglia2021complexity, he2023newton, xie2021complexity}, the method of \cite{goyens2024computing} strongly relies on an initialization condition.\par

Aside from augmented Lagrangian approaches, SQP methods have also been adopted in this manifold setting due to their remarkable practical proficiency in constrained optimization. Bai and Mei \cite{bai2018analysis} designed a first-order sequential quadratic programming method that is more suitable than Riemannian optimization methods in the undesirable scenario where the retraction onto the constraint set is an intractable problem. Although Riemannian gradient schemes are usually preferred for this class of problems, these gradient steps are nearly identical to those of the proposed SQP algorithm. It is shown that the suggested algorithm converges to an approximate first-order stationary point in at most $\mathcal{O}(\epsilon^{-4})$ steps, under similar assumptions to the ones considered here. It should be mentioned that the target points considered in \cite{bai2018analysis} match those of \cite{goyens2024computing}, i.e., \eqref{eq:manifolds}, and are thus different to the ones of the current work. Lastly, the authors state explicit local linear rates of order $(1-1/\kappa_R)^k$, where $\kappa_R$ represents the condition number of the Riemannian Hessian at the optimal solution.

\section{Technical Lemmas}\label{sec:AppB}

We begin this section with some standard bounds on $c$ under varying assumptions on its derivatives.

\begin{lemma} \label{lem:lipcon}
    Suppose \hyperlink{f1}{(F1)} holds. Then,
    \begin{equation} \label{eq:lipcongrad}
        \norm{c(x_k + d_k)}_1 \leq \norm{c_k + A_k d_k}_1 + \frac{L_A}{2} \norm{d_k}^2.
    \end{equation}

    If, in addition, assumption \hyperlink{f5}{(F5)} holds, then
    \begin{equation}    \label{eq:lipconhess}
        \norm{c(x_k + d_k)}_1 \leq \sum_{i=1}^m |c^i_k + \nabla c^i_k d_k + \frac{1}{2} d_k^{\top} \nabla^2 c^i_k d_k| + \frac{L_{ch}}{6} \norm{d_k}^3.
    \end{equation}
\end{lemma}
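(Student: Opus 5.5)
Both bounds come from a componentwise Taylor expansion of each $c^i$ around $x_k$ along the segment $[x_k,x_k+d_k]$, which lies in the convex set $\mathcal{X}$, followed by summing the absolute values over $i=1,\dots,m$ to recover the $\ell_1$ norm.

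For \eqref{eq:lipcongrad}, fix $i$ and write
\[
c^i(x_k+d_k)-c^i_k-\nabla c^i_k{}^\top d_k=\int_0^1\bigl(\nabla c^i(x_k+td_k)-\nabla c^i_k\bigr)^\top d_k\,dt .
\]
Lipschitz continuity of $\nabla c^i$ from \hyperlink{f1}{(F1)}, with constant $L_{\nabla c_i}$, bounds the integrand by $L_{\nabla c_i}t\norm{d_k}^2$. Integrating gives the remainder bound $\frac{L_{\nabla c_i}}{2}\norm{d_k}^2$. The triangle inequality then yields
\[
|c^i(x_k+d_k)|\le|c^i_k+\nabla c^i_k{}^\top d_k|+\tfrac{L_{\nabla c_i}}{2}\norm{d_k}^2 .
\]
Summing over $i$ and using $L_A=\sum_i L_{\nabla c_i}$ gives \eqref{eq:lipcongrad}, since $\sum_i|c^i_k+\nabla c^i_k{}^\top d_k|=\norm{c_k+A_kd_k}_1$.

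For \eqref{eq:lipconhess}, I expand one order further using the integral form of the second-order remainder:
\[
c^i(x_k+d_k)-c^i_k-\nabla c^i_k{}^\top d_k-\tfrac12 d_k^\top\nabla^2c^i_kd_k=\int_0^1(1-t)\,d_k^\top\bigl(\nabla^2c^i(x_k+td_k)-\nabla^2c^i_k\bigr)d_k\,dt .
\]
By \hyperlink{f5}{(F5)}, the integrand is at most $(1-t)L_{ch_i}t\norm{d_k}^3$. Since $\int_0^1(1-t)t\,dt=\tfrac16$, the remainder is bounded by $\frac{L_{ch_i}}{6}\norm{d_k}^3$. Applying the triangle inequality, summing over $i$, and using $L_{ch}=\sum_iL_{ch_i}$ gives \eqref{eq:lipconhess}.

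The only point requiring care is the applicability of the Lipschitz constants along the whole segment, which follows from the convexity of $\mathcal{X}$ as defined in the notation section. I expect no substantive obstacle; the rest is routine integration.
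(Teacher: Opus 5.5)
Your proposal is correct and follows the paper's own argument. The paper likewise bounds each componentwise Taylor remainder using the Lipschitz continuity of $\nabla c^i$ (resp.\ $\nabla^2 c^i$), applies the reverse triangle inequality, and sums over $i$ using $L_A=\sum_i L_{\nabla c_i}$ (resp.\ $L_{ch}=\sum_i L_{ch_i}$). Your integral-remainder computation supplies the constant $\tfrac16$ that the paper leaves to ``a similar argument.''

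One small caveat, which the paper shares: $\mathcal{X}$ is only defined to contain the iterates. So $[x_k,x_k+d_k]\subset\mathcal{X}$ when $x_k+d_k$ is an unsuccessful or corrected trial point is a tacit standing assumption, not a consequence of convexity alone.
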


\begin{proof}
    By Lipschitz continuity of the gradient of each $\nabla c^i$,
    \begin{equation*}
        |c^i(x_k+d_k)| - |c^i_k + \nabla c^i_k d_k| \leq |c^i(x_k+d_k) - c^i_k - \nabla c^i_k d_k| \leq \frac{L_{A_i}}{2}\norm{d_k}^2_2.
    \end{equation*}
    Using the definition of $L_A$, rearranging and summing this inequality for all $i=1,\dots,m$ gives the first result.

    The second result follows by a similar argument when Lipschitz continuity of the Hessian holds for each $c^i$. 
\end{proof}

Next, we provide a proof for Lemma \ref{lem:dualvariables}.

\begin{proof}
    Due to LICQ, $P_k=Z_k Z_k^\top$ is the orthogonal projection matrix onto $\mathcal{N}(A_k)$, where $Z_k$ is an orthonormal basis of the latter. Moreover, let $D_k:=A_k^\top (A_k A_k^\top)^{-1} A_k$ be the orthogonal projection matrix onto $\mathcal{R}(A_k^\top)$. We then have $I=D_k+P_k$. By definition of the Lagrangian, for the true least-squares estimators we find
    \begin{equation}\label{eq:c1}
        \nabla_x\mathcal{L}(x_k,\lambda_k^*)=g_k+A_k^\top \lambda_k^*=(I-D_k)g_k=Z_k Z_k^\top g_k=P_k^\top g_k.
    \end{equation}
Part (1) follows directly from the orthonormal properties of $Z_k$.\par

Let $Q_k:=[Z_k\;Y_k]\in\mathbb{R}^{n\times n}$ represent an orthogonal extension of $Z_k$ into a basis, where $Y_k\in\mathbb{R}^{n\times m}$ is an orthonormal basis for $\mathcal{N}(A_k)^{\perp}$. Then, we have
\begin{equation}\label{eq:b4}
    Q_k^\top P_k Q_k=\begin{bmatrix}
I_{n-m} & 0 \\
0 & 0
\end{bmatrix}.
\end{equation}
The orthogonality of $Q_k$, the property $P_k=P_k^\top$ and \eqref{eq:b4} yield
\begin{equation}
    Q_k^\top (P_k^\top H_k P_k) Q_k=(Q_k^\top P_k Q_k)(Q_k^\top H_k Q_k)(Q_k^\top P_k Q_k)=\begin{bmatrix}
Z_k^\top H_k Z_k & 0 \\
0 & 0
\end{bmatrix}. 
\end{equation}
The fact that any two orthogonally similar matrices have the same eigenvalues gives (2).

For the third part of the auxiliary lemma, we first show that the orthogonal projection matrix $D(\cdot)$ of $\mathcal{R}(A(\cdot)^\top)$ is Lipschitz continuous. To do this, we first show that the operator $B(\cdot):=(A(\cdot)A(\cdot)^\top )^{-1}$ is Lipschitz continuous.  To this end, let points $x,y$. By a standard inverse identity %(\nd{special case of the Woodbury formula})
and the Cauchy-Schwarz inequality we get
\begin{equation}\label{eq:c2}
    \norm{B(x)-B(y)}\leq \norm{B(x)}\norm{B(x)^{-1}-B(y)^{-1}}\norm{B(y)}.
\end{equation}
Note that $B$ is uniformly bounded due to \hyperlink{f2}{(F2)}. By adding and subtracting $A(x)A(y)^\top$ inside the middle norm of (\ref{eq:c2}) and using the Cauchy-Schwarz inequality as well as \hyperlink{f2}{(F2)}, we deduce the existence of some constant $C_A$ such that
\begin{equation}
    \norm{B(x)-B(y)}\leq C_A \norm{A(x)-A(y)}.
\end{equation}
Lipschitz continuity of the Jacobian yields Lipschitz continuity of $B$. Now, obserse that, for points $x,y$, we have $D(x)-D(y)=A(x)^\top B(x)A(x)-A(y)^\top B(y) A(y)$, which can be equivalently written as
\begin{equation}\label{eq:c4}
    (A(x)^\top - A(y)^\top) B(x) A(x)
 + A(y)^\top (B(x) - B(y)) A(x)
 + A(y)^\top B(y) (A(x) - A(y)).
\end{equation}
By Lipschitz continuity of $A$ and $B$, and the triangle and Cauchy-Schwarz inequalities applied to (\ref{eq:c4}), we can finally show that $D(\cdot)$ is Lipschitz continuous over $\mathcal{X}$. Hence, $P(\cdot)$ is also Lipschitz continuous. Let $L_P$ be its Lipschitz constant. Then, for $x,y\in \mathcal{X}$,
\begin{align*}
    \norm{P(x)^\top g(x)-P(y)^\top g(y)}&\leq \norm{P(x)}\norm{g(x)-g(y)}+\norm{g(y)}\norm{P(x)-P(y)}\\
    &\leq (L_g+\kappa_g L_P)\norm{x-y}.
\end{align*}
Lipschitz continuity of $P(\cdot)^\top g(\cdot)$ follows.
\end{proof}

\end{appendices}

\end{document}